\newtheorem{thm}{Theorem}[section]
\newtheorem{cor}[thm]{Corollary}
\newtheorem{lem}[thm]{Lemma}
\newtheorem{prop}[thm]{Proposition}
  \newtheorem{f}[thm]{Fact}                              
\theoremstyle{definition}
\newtheorem{defin}[thm]{Definition}
\theoremstyle{remark}
\newtheorem{remark}[thm]{Remark}
\newtheorem{example}[thm]{Example}
\numberwithin{equation}{section}
\newcommand{\delete}[1]{} 
\newcommand{\nt}{\noindent}
\def\eps{{\varepsilon}}
\def\s{\sigma}
\def\a{\alpha}
\def\eps{{\varepsilon}}
\newcommand{\g}{\gamma}
\newcommand{\cU}{\mathfrak{U}}
\def\~dl{\overline{ \delta }}
\newcommand{\sk}{\vskip 0.2cm}
\newcommand{\ben}{\begin{enumerate}}
\newcommand{\een}{\end{enumerate}}
\newcommand{\bit}{\begin{itemize}}
\newcommand{\eit}{\end{itemize}}
\def\R {{\mathbb R}}
\def\N {{\mathbb N}}
\def\Z {{\mathbb Z}}
\def\Q {{\mathbb Q}}
\def\U{\mathcal{U}} 
\def\T {{\mathbb T}}
\def\F {{\mathbb F}}
\def\RUC{{\hbox{RUC}}}
\def\Iso{{\mathrm{Iso}}}
\def\Homeo{{\mathrm{Homeo}}}
\newcommand{\TFAE}{The following are equivalent: }
\def\QED{\nobreak\quad\ifmmode\roman{Q.E.D.}\else{\rm Q.E.D.}\fi}
\begin{document}

\title[]{Non-archimedean topological monoids}

		\author[]{M. Megrelishvili}
\address[M. Megrelishvili]
{\hfill\break Department of Mathematics
	\hfill\break
	Bar-Ilan University, 52900 Ramat-Gan	
	\hfill\break
	Israel}
\email{megereli@math.biu.ac.il}

\author[]{M. Shlossberg}
\address[M. Shlossberg]
{\hfill\break School of Computer Science
	\hfill\break Reichman University, 4610101 Herzliya 
	\hfill\break Israel}
\email{menachem.shlossberg@post.runi.ac.il}


\date{2024, June}

\keywords{equivariant compactification, compactifiable monoid,  non-archimedean monoid, Stone duality, Pontryagin duality}  

\thanks{{\it 2020 AMS classification:} 54H15, 26E30, 54D35, 18F70} 
\thanks{This research was supported by a grant of the Israel Science Foundation (ISF 1194/19).} 

\begin{abstract}   
We say that a topological 
monoid $S$ is left non-archimedean (in short: l-NA) if the left action of $S$ on itself admits a proper $S$-compactification $\nu \colon S \hookrightarrow Y$ such that $Y$ is a Stone space. This provides a natural generalization of the well known concept of NA topological groups. The Stone and Pontryagin dualities play major role in achieving useful characterizations of NA monoids. 
We show that many naturally defined topological monoids are NA and 
present universal NA monoids. 
Among others, we prove that the Polish monoid 
 $C(2^{\omega},2^{\omega})$ is a universal separable metrizable 
l-NA monoid and the Polish monoid $\N^{\N}$ is universal for separable metrizable r-NA monoids. 
\end{abstract}

\maketitle  
\setcounter{tocdepth}{1}
 \tableofcontents
    
\section{Introduction}
 
A topological group $G$ is non-archimedean (NA, in short) if 
it has a local basis at the identity every member of which is an open subgroup of $G$. 
The importance of NA topological groups is well known in topology and non-archimedean analysis. They play a central role in the Kechris-Pestov-Todorcevic \cite{KPT} theory regarding Fra\"{i}ss\'{e} structures. For example, recall a characterization of Polish NA groups as the automorphism groups $\rm{Aut}(\mathbb{A})$ of countable Fra\"{i}ss\'{e} structures $\mathbb{A}$, \cite[Section 6.6]{Pe-nbook}.  

There are several equivalent definitions for NA groups. 
One may show (see \cite{MS1}) that $G$ is NA if and only if it admits a proper $G$-compactification $\nu \colon G \hookrightarrow Y$, where $Y$ is a Stone space (i.e., $Y$ is compact and zero-dimensional) and $G$ is treated as a $G$-space with respect to the usual left action.  

This reformulation suggests a natural analog for topological monoids.  
Synthesizing some ideas and techniques from the papers \cite{Me-cs07} (about compactifiable monoids) and \cite{MS1} (about NA groups), 
we introduce 
{\it non-archimedean monoids}. 
More precisely, we say that a topological monoid $S$ is \textit{left non-archimedean} 
 (in short, l-NA) if there exists a  proper $S$-compactification $\nu \colon S \hookrightarrow Y$ of the left action of $S$ on itself, where $Y$ is a Stone space (Definition \ref{d:NA-action}). 
 Similarly we define the \textit{right non-archimedean monoids} (r-NA).  

 Note that a topological monoid $S$ is l-NA if and only if its opposite monoid $S^{op}$ is r-NA. A topological group $G$  is l-NA if and only if it is r-NA as the inversion map $G \to G^{op}, \ g \mapsto g^{-1}$ is a topological isomorphism.
 In contrast to  NA groups, there is a clear asymmetry for monoids. There are l-NA monoids which are not r-NA and vice versa (see Example \ref{ex:contrast}). 
 
Every NA monoid is zero-dimensional (that is, has a basis which consist of clopen subsets). Not every zero-dimensional (locally compact second countable) monoid is NA. See Example \ref{ex:Q} and Remark \ref{r:notNA}.  

In \cite{MeShNAtransp}, we introduced non-archimedean transportation problems via the naturally arising  Kantorovich ultra-norms. 
In Proposition \ref{p:NAvalued}, we use these ultra-norms 
and NA Arens-Eells type theorem \cite[Theorem 4.2]{MeShNAtransp}, 
to prove that every  r-NA monoid  is a topological submonoid of $\Theta_{lin}(V)$, the set of all nonexpansive  linear operators on $V$,  where $V$  is an ultra-normed $\F$-vector space and $\F$  is an arbitrary NA valued field. 

The following characterization theorem,  
which we prove in Section \ref{s:dualities},  
demonstrates that the class of all NA monoids is very large and contains many important examples.  

 \begin{thm} \label{t:conditSEM} 
 	The following assertions are equivalent for every topological monoid $S$.  
 	
 	\ben
 \item $S$ is an l-NA topological monoid.
  
 \item $S$ is a topological submonoid of $C(Y,Y)$ for some Stone space $Y$  (where $w(Y) = w(S)$).
 
 \item 
 The opposite monoid $S^{op}$ can be embedded into 
 the monoid $\rm{End}_{R}(B)$ of endomorphisms of some discrete Boolean ring $B$ (with cardinality $|B| \leq w(S)$). 
 
 \item
 $S^{op}$ is a topological submonoid of $D^{D}$
 for some discrete set $D$ (where $|D|\leq w(S)$).
 
 \item There exists an ultra-metric space $(M,d)$ such that 
 $S^{op}$ is 
 a topological submonoid of the monoid $\Theta(M,d)$  of all 1-Lipschitz maps $M \to M$ equipped with the pointwise topology (where $w(M) \leq w(S)$). 
 
 \item There exists a topologically compatible uniformity $\U$ on $S$ which is generated by a family of right $S$-nonexpansive  ultra-pseudometrics. 
 
 \item
 $S$ is topologically isomorphic to a submonoid of $\rm{Unif}(Y,Y)$ for some NA uniform space $(Y,\mathcal V)$.

 \item 	$S$ can be embedded into the monoid $\rm{End}(K)$ of endomorphisms of some profinite Boolean group $K$  (with $w(K) \leq w(S)$).  
 \item $S$ can be embedded into the monoid $\rm{End}(K)$ of endomorphisms for a compact
 abelian topological group $K$ (with $w(K) \leq w(S)$).

 	%
 \een
 \end{thm}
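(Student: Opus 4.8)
The plan is to prove the nine assertions equivalent by one cyclic chain $(1)\Rightarrow(2)\Rightarrow(3)\Rightarrow(4)\Rightarrow(5)\Rightarrow(6)\Rightarrow(1)$, together with the side-arrows $(2)\Rightarrow(7)\Rightarrow(6)$ and the Pontryagin triangle $(4)\Rightarrow(8)\Rightarrow(9)\Rightarrow(4)$. Two engines do the work: the theory of proper compactifications of monoid actions from \cite{Me-cs07} (mirroring the group case of \cite{MS1}), which ties the abstract condition $(1)$ to the concrete realizations; and the \emph{contravariant} dualities of Stone and of Pontryagin, which dictionary-translate between self-map monoids of Stone spaces, endomorphism monoids of Boolean rings and of compact abelian groups, and powers $D^{D}$. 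The contravariance is exactly why $S^{op}$ rather than $S$ occurs in $(3)$--$(5)$: each application of a duality flips the side, so an even number of flips leaves $S$ (items $(2),(7),(8),(9)$) and an odd number produces $S^{op}$. All the function and endomorphism monoids are given the topology of uniform convergence (for compact targets this is the compact-open topology; for discrete $D$ it is the product topology on $D^{D}$, which is the pointwise topology appearing in $(4)$ and $(5)$); this makes them topological monoids and turns the dualities into \emph{topological} (anti-)isomorphisms, and the cardinality bounds then follow from $w(\mathrm{Spec}\,B)=|B|$ for infinite $B$ and the corresponding statements for Pontryagin duals.

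For the main cycle, $(1)\Rightarrow(2)$ is the co-action construction: a proper Stone $S$-compactification $\nu\colon S\hookrightarrow Y$ of the left action makes $S$ act on $Y$ by continuous self-maps, and $s\mapsto(y\mapsto s\cdot y)$ is an injective topological monoid embedding $S\hookrightarrow C(Y,Y)$ (injectivity from evaluation at $\nu(e)$, continuity into the uniform-convergence topology from joint continuity of the action and compactness of $Y$, embedding because $\nu$ is); one may take $Y$ of minimal weight, necessarily $w(Y)=w(S)$. Then $(2)\Rightarrow(3)$ is Stone duality ($f\mapsto f^{-1}$ is a topological anti-isomorphism $C(Y,Y)\to\mathrm{End}_{R}(\mathrm{Clop}\,Y)$), $(3)\Rightarrow(4)$ is the remark that a ring endomorphism of $B$ is in particular a self-map of the set $B$, and $(4)\Rightarrow(5)$ puts the $\{0,1\}$-valued ultrametric on $D$, for which every self-map is $1$-Lipschitz and $\Theta(D,d)=D^{D}$. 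The step $(5)\Rightarrow(6)$ reads a homomorphism $S^{op}\to\Theta(M,d)$ as a right action of $S$ on $(M,d)$ by $1$-Lipschitz maps, $m\cdot s:=\phi(s)(m)$, and pulls back the point-evaluation ultra-pseudometrics $\rho_{m}(x,y):=d(m\cdot x,m\cdot y)$; these are right $S$-nonexpansive because $\rho_{m}(xs,ys)=d((m\cdot x)\cdot s,(m\cdot y)\cdot s)\le d(m\cdot x,m\cdot y)$, and they generate the (pointwise) topology of $S$. The closing step $(6)\Rightarrow(1)$ is the heart of the matter: right $S$-nonexpansiveness of an ultra-pseudometric $\rho$ says precisely that $\rho(sx,tx)\le\rho(s,t)$ for every $x$, i.e. that the orbit maps of the \emph{left} action are uniformly $\rho$-continuous; hence for a clopen $\rho$-ball $C$ and $\rho(s,t)<\varepsilon$ one gets $sx\in C\Leftrightarrow tx\in C$ for all $x$, so, after enlarging by left translates to an $S$-invariant family, the idempotents $\mathbf 1_{L_{s}^{-1}(C)}$ all have norm-continuous left-orbit maps and separate points from closed sets of $S$; by the machinery of \cite{Me-cs07} the Gelfand space of the closed $S$-invariant subalgebra they span is a proper $S$-compactification of the left action, and --- being the spectrum of an algebra generated by idempotents --- a Stone space.

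The side implications are comparatively soft. $(2)\Rightarrow(7)$ is immediate, since a Stone space is a compact, hence uniquely uniformizable, NA uniform space and on it $C(Y,Y)=\mathrm{Unif}(Y,Y)$; and $(7)\Rightarrow(6)$ follows by writing the NA uniformity of $Y$ as generated by bounded ultra-pseudometrics $d_{i}$, letting $S$ act on $Y$ by uniformly continuous maps, and checking that $\rho_{i}(x,y):=\sup_{p\in Y}d_{i}(x\cdot p,y\cdot p)$ are right $S$-nonexpansive ultra-pseudometrics generating the topology of $S$. For the Pontryagin triangle, $(4)\Rightarrow(8)$ extends self-maps of a basis $D$ linearly to embed $D^{D}\hookrightarrow\mathrm{End}_{\mathbb{F}_{2}}(V)$, $V$ the free $\mathbb{F}_{2}$-vector space on $D$, then dualizes: $K:=\widehat{V}$ is a profinite Boolean group with $w(K)=|D|$ and $\mathrm{End}_{\mathbb{F}_{2}}(V)\cong\mathrm{End}(K)^{op}$ topologically, so $S^{op}\hookrightarrow\mathrm{End}(K)^{op}$, i.e. $S\hookrightarrow\mathrm{End}(K)$. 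Then $(8)\Rightarrow(9)$ is trivial (a profinite Boolean group is compact abelian), and $(9)\Rightarrow(4)$ runs the duality backwards: $\mathrm{End}(K)\cong\mathrm{End}(\widehat{K})^{op}$ with $A:=\widehat{K}$ discrete abelian, and $\mathrm{End}(A)\hookrightarrow A^{A}$ yields $S^{op}\hookrightarrow A^{A}$ with $|A|=w(K)$.

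The main obstacle I expect is $(6)\Rightarrow(1)$ --- equivalently, the substantive half of $(1)\Leftrightarrow(2)$: one must genuinely build the Stone $S$-compactification from the uniform data and verify that the extended left action is jointly continuous, which forces one to invoke, and (for the zero-dimensional refinement) re-examine, the function-algebra/Gelfand machinery of \cite{Me-cs07}. A secondary but pervasive difficulty is bookkeeping: keeping the topologies on $C(Y,Y),\mathrm{End}_{R}(B),D^{D},\Theta(M,d),\mathrm{Unif}(Y,Y),\mathrm{End}(K)$ mutually compatible so that Stone and Pontryagin duality become \emph{topological} anti-isomorphisms, threading the parity of dualizations so that $S$ versus $S^{op}$ lands correctly in each item, and carrying the weight/cardinality bounds along the whole chain.
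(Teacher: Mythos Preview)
Your proposal is correct and establishes the same equivalences, but the logical architecture differs from the paper's in two places. First, the paper closes its main chain as $(6)\Rightarrow(7)\Rightarrow(1)$ rather than your direct $(6)\Rightarrow(1)$: from right-nonexpansive ultra-pseudometrics it notes that the left action is $\mathcal U$-bounded but not a priori saturated, invokes a separate lemma (Proposition~\ref{l:noteasy}) to pass to the saturated refinement $\mathcal U_S$ while preserving zero-dimensionality, embeds $S\hookrightarrow\mathrm{Unif}(S,\mathcal U_S)$, and then quotes the already-proved fact that $\mathrm{Unif}(Y,Y)$ is l-NA for any NA uniform space $Y$. Your route through $\{0,1\}$-valued RUC functions and the Gelfand spectrum of the idempotent-generated algebra is equally valid and more self-contained, but hides the saturation issue inside the $S$-invariance of the function algebra; the paper's route is more modular and recycles its structural lemmas. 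Conversely, you attach $(7)$ as a side node via $(2)\Rightarrow(7)\Rightarrow(6)$, which is slicker than threading it through the main chain. Second, the paper enters the Pontryagin triangle at $(3)\Rightarrow(8)$ by taking $K:=B^{*}$ and applying its duality theorem (Theorem~\ref{p:anti-iso}) directly, whereas you enter at $(4)\Rightarrow(8)$ via the free $\mathbb{F}_2$-vector space on $D$ and then dualize; both work, yours being slightly more concrete and the paper's slightly shorter given the machinery already in hand.
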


 One may expect that several naturally defined monoids in NA functional analysis are NA (see, among others, Proposition \ref{p:NAvalued}).   
 Among the technical tools we use in the present paper are the Stone and Pontryagin dualities (Section \ref{s:dualities} and, in particular, Theorem  \ref{p:anti-iso}). Also the factorization Theorem \ref{t:G-SklyarenkoSEMmetr} for monoid actions provides an important technical tool. 
 

 Interesting additional source of NA monoids is the left (or, right) completion of NA groups. According to Proposition \ref{p:r-compl}  if $G$ is an NA group, then 
 the left completion $\widehat{G}^l$ is an r-NA monoid and the right completion $\widehat{G}^r$ is an l-NA monoid.  
 Recall that the topological monoids $\widehat{G}^l$ are important objects in the K-P-T theory \cite{KPT}; namely, they provide a useful tool understanding the \textit{oscillation stability}. We refer to  \cite{Pe-nbook} and \cite{KPT} for more details.

 As usual, by the symmetric group $S_D$ we mean the group of all permutations of a set $D$ with the pointwise topology (inherited from $D^D$). 
 Recall that $S_D$ is universal for NA groups with weight $w(G) \leq |D|$ (see, for example, \cite{MS1}). 
 For example, the Polish group $S_{\N}$ is universal for all second countable (Polish) NA groups. The same is true for the Polish group $\Homeo(2^{\omega})$ (homeomorphism group of the Cantor cube $2^{\omega}$), \cite{MeSc01}. 
 
 We show that similar results hold for NA monoids. 
  More precisely, we prove the following results in Theorem \ref{c:universal} and Theorem \ref{c:CANTOR}.

 \begin{thm} 
 	(Universal NA monoids)  
 	\begin{enumerate}
 		\item 	The Polish monoid 
 		$\N^{\N}$ is a universal separable metrizable 
 		r-NA monoid. 
 		More generally, $\kappa^{\kappa}$ is a universal r-NA monoid of weight $\kappa$ for every infinite cardinal $\kappa$. 
 		\item 	
 		 The Polish monoid $C(2^{\omega},2^{\omega})$ is universal for separable metrizable l-NA monoids. 
 	\end{enumerate} 
 \end{thm}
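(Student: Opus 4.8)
The plan is to read off both statements from the characterization Theorem~\ref{t:conditSEM}, whose conditions (4) and (2) already produce an embedding into an ambient monoid of the correct type; the only remaining work is to replace those ambient monoids by the concrete universal objects $\kappa^\kappa$ and $C(2^\omega,2^\omega)$, \emph{preserving the monoid identity}. Throughout, ``universal $X$ of weight $\kappa$'' is taken to mean that $X$ has weight $\kappa$ and every $X$ of weight $\le\kappa$ embeds into it as a topological submonoid.

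For (1), let $S$ be an r-NA monoid with $w(S)\le\kappa$. Then $S^{op}$ is l-NA and $w(S^{op})\le\kappa$, so the equivalence (1)$\Leftrightarrow$(4) of Theorem~\ref{t:conditSEM} applied to $S^{op}$ exhibits $S=(S^{op})^{op}$ as a topological submonoid of $D^D$ for some discrete set $D$ with $|D|\le\kappa$. Now fix an injection $D\hookrightarrow\kappa$ and send $f\in D^D$ to the extension $\bar f\in\kappa^\kappa$ which is the identity on $\kappa\setminus D$. One checks routinely that $f\mapsto\bar f$ is an injective, identity-preserving monoid homomorphism and a homeomorphism onto its image for the topologies of pointwise convergence; the image is the submonoid of those $g\in\kappa^\kappa$ that fix $\kappa\setminus D$ pointwise and map $D$ into $D$, and $\overline{f\circ g}=\bar f\circ\bar g$. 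Composing, $S$ embeds into $\kappa^\kappa$ as a topological submonoid. Since $\kappa^\kappa=((\kappa^\kappa)^{op})^{op}$ embeds into $D^D$ with $D=\kappa$, condition (4) shows $\kappa^\kappa$ is itself r-NA of weight $\kappa$; for $\kappa=\omega$ this is the Polish Baire space $\N^\N$ under composition, which gives the first sentence of the theorem.

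For (2), let $S$ be a separable metrizable l-NA monoid, so $w(S)\le\omega$. By (1)$\Rightarrow$(2) of Theorem~\ref{t:conditSEM}, $S$ is a topological submonoid of $C(Y,Y)$ for a Stone space $Y$ with $w(Y)=w(S)$, i.e.\ a compact metrizable zero-dimensional space; if $|Y|\le 1$ then $S$ is trivial and embeds everywhere, so assume $|Y|\ge 2$. Then $Y^\omega$ is nonempty, compact, metrizable, zero-dimensional and has no isolated points --- any basic cylinder in $Y^\omega$ is homeomorphic to $Y^\omega$ and hence is not a singleton --- so $Y^\omega$ is homeomorphic to the Cantor set $2^\omega$ by Brouwer's characterization. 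The coordinatewise assignment $f\mapsto f^\omega$, where $f^\omega((y_k)_k)=(f(y_k))_k$, is an injective, identity-preserving topological monoid embedding $C(Y,Y)\hookrightarrow C(Y^\omega,Y^\omega)$: it is a homomorphism since $(f\circ g)^\omega=f^\omega\circ g^\omega$, and continuity in both directions for the topology of uniform convergence is immediate, the inverse being recovered by evaluating on constant sequences and reading off one coordinate. Conjugating by a homeomorphism $Y^\omega\cong 2^\omega$ identifies $C(Y^\omega,Y^\omega)$ with $C(2^\omega,2^\omega)$ as topological monoids, so $S$ embeds into $C(2^\omega,2^\omega)$ as a topological submonoid; and $C(2^\omega,2^\omega)$ is itself a separable metrizable l-NA monoid by condition (2) with $Y=2^\omega$.

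The main obstacle, beyond invoking Theorem~\ref{t:conditSEM}, is exactly the identity-preservation in (2): a monoid embedding must carry the unit to the unit, so the usual ``extend a self-map of a closed subspace through a retraction'' device --- which suffices for the non-unital compactification picture and for topological spaces --- has to be replaced by the coordinatewise power $Y\rightsquigarrow Y^\omega$, together with the fact that $Y^\omega$ is always a Cantor set. The corresponding step in (1), fitting $D^D$ inside $\kappa^\kappa$, is the much easier ``extend by the identity outside $D$'' maneuver.
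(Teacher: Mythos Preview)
Your argument is correct. Part~(1) is essentially the paper's proof: both invoke the characterization theorem to land $S$ in $D^D$ for some discrete $D$ of cardinality $\le\kappa$ (the paper goes via condition~(3) and the inclusion $\mathrm{End}_R(B)\subset B^B$, you go directly via condition~(4)), and then identify $D^D$ with a submonoid of $\kappa^\kappa$. Your explicit ``extend by the identity outside $D$'' step is a useful clarification that the paper leaves implicit in the isomorphism $B^B\simeq\N^\N$.

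Part~(2) takes a genuinely different route. The paper passes through Stone and Pontryagin duality: given $S\hookrightarrow C(Y,Y)$ with $Y$ a metrizable Stone space, it forms the Boolean ring $B=C(Y,\Z_2)$ and its Pontryagin dual $B^*$, then uses Corollary~\ref{c:isom} to obtain an embedding $C(Y,Y)\hookrightarrow\mathrm{End}(B^*)\subset C(B^*,B^*)$; finally $B^*$ is Cantor because it is a compact metrizable zero-dimensional \emph{group} and hence has no isolated points. You bypass the duality entirely by taking the countable power $Y^\omega$ and the coordinatewise map $f\mapsto f^\omega$, observing that $Y^\omega$ is Cantor whenever $|Y|\ge 2$. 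Your approach is more elementary and self-contained; the paper's approach has the advantage that it automatically yields the equivariant pair $(h,\delta)$ of Corollary~\ref{c:isom} and hence the stronger universal-\emph{action} statement mentioned just after the theorem. (Your construction in fact also gives an equivariant pair, via the diagonal $Y\hookrightarrow Y^\omega$, though you do not need to say so for the theorem as stated.)
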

 
 Moreover, the action of $C(2^{\omega},2^{\omega})$ on $2^{\omega}$ is  
 universal in the class of all actions $S \times Y \to Y$, where $Y$ is a  metrizable Stone space and $S$ is a topological submonoid of $C(Y,Y)$.  
 
 According to Proposition \ref{p:emb},  
 $C(2^{\omega},2^{\omega})$ is embedded into $(\N^{\N})^{op}$ and $(\N^{\N})^{op}$ is embedded into $C(2^{\omega},2^{\omega})$.  
 On the other hand, $(\N^{\N})^{op}$ and $C(2^{\omega},2^{\omega})$ are not isomorphic as topological monoids. 
 
 Recall also a known result from \cite{Me-cs07} which asserts that the Polish topological monoid  $C([0,1]^{\omega}, [0,1]^{\omega})$ is universal for separable metrizable (hence, also Polish) left compactifiable monoids and the action of $C([0,1]^{\omega}, [0,1]^{\omega})$ on $[0,1]^{\omega}$ is a universal left compactifiable action. 
According to an earlier result of Uspenskij \cite{Usp-86}, the topological group  $\Homeo([0,1]^{\omega})$ is universal for all Polish topological groups.  
 
 \sk 
 \noindent \textbf{Acknowledgment}. 
  We thank Vladimir Pestov for his valuable insights. We are very grateful to the  
  referees who suggested several important improvements.
 
\sk  
\section{Preliminaries and some examples of topological monoids} 

All topological spaces below are  usually  assumed to be  
Tychonoff.  
Following \cite{Isb}, a uniformity that is not necessarily Hausdorff is called a \textit{pre-uniformity}.  
We say that a (Tychonoff) topological space $X$ is zero-dimensional if it has a  topological basis which consists of clopen subsets. 
This property is hereditary. If $X$ is compact then it is zero-dimensional if and only if its covering dimension $\dim (X)$ is zero. That is, every finite open covering has a finite open refinement which is a partition of $X$.  
A \textit{Stone space} is a compact zero-dimensional space. 
A \textit{Polish space} is  a topological space homeomorphic to a complete metric space

As in \cite{MS1}, 
we say that a uniformity on a set $X$ is NA if it is zero-dimensional ($\dim \U=0$). It is equivalent to say that there exists a uniform base $\g$ of $\U$ such that every entourage $\eps \in \g$ is an equivalence relation on $X$.     
Every compact space $K$ with its unique compatible uniformity is NA if and only if $K$ is a Stone space.   

A dense continuous function $\nu \colon X \to Y$ into a Hausdorff space is a compactification of $X$. We say that $\nu$ is \textit{proper} if it is a topological embedding.  

Recall that the Samuel compactification $u \colon (X,\U) \to uX$ for a Hausdorff uniformity $\U$ is the (always proper) compactification induced by the algebra $\rm{Unif}_b(X,\R)$ of all bounded $\U$-uniformly continuous functions.  

\begin{lem} \label{l:0} 
	Let $(X,\U)$ be a zero-dimensional uniform space. Then the corresponding Samuel compactification 
	$\nu_s \colon X \hookrightarrow uX$ is a proper zero-dimensional compactification. 
	It can be identified with completion of the precompact replica $\U^*$. Open equivalence relations $\eps \in \U$ on $X$ with finitely many equivalence classes form a uniform basis $B$ of $\U^*$.  
	In particular, $uX$ is a Stone space. 
\end{lem}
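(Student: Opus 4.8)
The plan is to work throughout with the precompact replica $\U^*$ of $\U$ and to use the standard description $uX = \widehat{(X,\U^*)}$; recall that $\U^*$ is generated by the entourages $E_{\mathcal A}:=\bigcup_{i=1}^{n}A_i\times A_i$ associated with the finite uniform covers $\mathcal A=\{A_1,\dots,A_n\}$ of $(X,\U)$. Properness of $\nu_s$ and the identification of $uX$ with $\widehat{(X,\U^*)}$ are the standard facts about Samuel compactifications of Hausdorff uniform spaces recalled just above, so the real content is that $\U^*$ admits a uniform basis consisting of open equivalence relations with finitely many classes; granting this, the basis $B$ of the statement is produced, and the remaining assertions follow by a routine transfer to the completion.

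First I would establish the claim about $\U^*$. Take a finite uniform cover $\mathcal A=\{A_1,\dots,A_n\}$. Using $\dim\U=0$, pick an equivalence relation $E\in\U$ whose partition of $X$ into $E$-classes refines $\mathcal A$. For an $E$-class $C$ set $i(C):=\min\{i:\ C\subseteq A_i\}$ and define $B_i:=\bigcup\{C:\ i(C)=i\}$. Then $\{B_1,\dots,B_n\}$ is a finite partition of $X$, refining $\mathcal A$, with each $B_i$ saturated for $E$; since $E[x]$ is a neighbourhood of $x$ for every $x\in X$ (as $E\in\U$ and $E$ is an equivalence relation), each $B_i$ is open, hence clopen. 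Therefore $E_B:=\bigcup_{i=1}^{n}B_i\times B_i$ is an open equivalence relation on $X$ with at most $n$ classes; it contains $E$, so $E_B\in\U$; it is the entourage $E_{\{B_i\}}$ of the finite uniform cover $\{B_i\}$, so $E_B\in\U^*$; and $B_i\subseteq A_i$ gives $E_B\subseteq E_{\mathcal A}$. Hence the open finite-index equivalence relations in $\U$ are cofinal in $\U^*$, and (being closed under finite intersection, via common refinements of partitions) they form the asserted uniform basis $B$ of $\U^*$.

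It remains to see that $uX$ is a Stone space. Precompactness of $\U^*$ gives that $uX=\widehat{(X,\U^*)}$ is compact, hence carries a unique compatible uniformity $\widehat{\U^*}$, a base of which is obtained by taking the closures in $uX\times uX$ of the members of $B$. For $E_B\in B$ as above, the locally constant map $g\colon X\to\{1,\dots,n\}$ sending $B_i$ to $i$ satisfies $\{(x,y):g(x)=g(y)\}=E_B\in\U^*$, so $g$ is $\U^*$-uniformly continuous and extends continuously to $\bar g\colon uX\to\{1,\dots,n\}$; the sets $\overline{B_i}=\bar g^{-1}(i)$ are clopen and form a finite partition of $uX$, and $\overline{E_B}=\bigcup_i \overline{B_i}\times\overline{B_i}$ is precisely the corresponding (clopen) equivalence relation on $uX$. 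Thus $\widehat{\U^*}$ has a base of equivalence relations, which for a compact Hausdorff space is equivalent to zero-dimensionality; so $uX$ is compact and zero-dimensional, i.e. a Stone space, and in fact $\widehat{\U^*}$ is itself NA.

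The hard part is the middle step: an arbitrary equivalence relation of $\U$ may have infinitely many classes, and one must manufacture from it a finite-index equivalence relation that still lies in $\U^*$. The ``saturate-and-amalgamate'' construction of the $B_i$ does this, but three small points need watching — that $\{B_i\}$ is genuinely a uniform cover (so $E_B\in\U^*$), that each $B_i$ is open (so $E_B$ is an open entourage), and that the amalgamation respects the $E$-classes (so $E_B$ is an equivalence relation); each follows from $E\in\U$ together with the refinement property, and these are the places that require care. Everything after that — compactness of the completion, the extension of $g$, and the passage from ``base of equivalence relations'' to zero-dimensionality — is standard.
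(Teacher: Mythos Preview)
Your argument is correct and, in fact, considerably more detailed than what the paper offers: the paper's own proof consists of a single sentence referring the reader to Isbell's book (\cite[Ch.~5]{Isb}) for a stronger result, with no explicit construction. Your ``saturate-and-amalgamate'' step---refining a given finite uniform cover by an equivalence relation $E\in\U$ and then coalescing $E$-classes according to the first $A_i$ they land in---is exactly the right idea for producing the basis $B$ of $\U^*$ explicitly, and your transfer to the completion via the extension of the locally constant map $g$ is clean. The only thing to flag is that your opening appeal to ``$\dim\U=0$'' to obtain an equivalence relation $E\in\U$ whose partition refines $\mathcal A$ deserves one line of justification in the entourage language the paper uses: since $\mathcal A$ is a uniform cover there is $\eps\in\U$ with $\{\eps(x):x\in X\}\succ\mathcal A$, and by the NA hypothesis one may take $\eps$ to be an equivalence relation $E$, whence each $E$-class $E[x]$ lies in some $A_i$. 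With that made explicit, your proof is a self-contained substitute for the paper's citation.
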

\begin{proof}
	See for example \cite[Ch. 5]{Isb} for a much more strong result. 
\end{proof}

For simplicity, we consider topological monoids (instead of semigroups) and monoidal actions. 
By an action of a monoid $S$ on a set $X$ we mean a left or right monoidal action. 
That is, $(st)(x)=s(t(x))$ for every $s,t \in S, \ x \in X$ and  the identity element $e_S \in S$ acts as the identity transformation of $X$.
Speaking about homomorphisms (in particular, embeddings) of monoids we always assume that the homomorphisms preserve the identity. 

It is worth noting that our results can be extended to topological semigroups which are not necessarily monoids. The reason is that 
 	every topological semigroup $S$ can be canonically  embedded into the
 	topological monoid $S_e:=S \sqcup \{e\}$ as a clopen subsemigroup
 	by adjoining to $S$ an isolated identity $e$. Furthermore, any
 	action $\pi \colon S \times X \to X$ naturally extended to the monoidal
 	action $\pi_e \colon S_e \times X \to X$ 
 (see \cite[Remark 3.11]{Me-cs07}).

 
\sk 
Recall some natural constructions of topological monoids and monoidal actions. 
We will use them in the sequel to build NA monoids.

\sk  
\subsection{Pointwise topology} 
\label{d:p} 

	Let $Y$ be a topological space and $X$ be a set. 
	Denote by $Y^X$ the set of all maps $f\colon X \to Y$. 
	The \textit{pointwise topology} 
	$\tau_{p}$ on $Y^X$ is the topology having as the topological subbase all sets of the form 
	$$
	[x_0,O]:=\{f \in C(X,Y): f(x_0) \subset O\}. 
	$$ 
	where $x_0 \in X$ and $O$ is open in $Y$. 
	It is just the product topology on $Y^X$.

If $(Y,\U)$ is a uniform space, then $Y^X$ carries the \textit{pointwise uniformity} $\U_p$ which induces the pointwise topology. That is, $top(\U_p)=\tau_p$. Recall that   the following system
of entourages 
$
\{[x_0,\eps]: x_0 \in X, \eps \in \U\}, \ \text{where} \ [x_0,\eps]:=\{(f_1,f_2) \in Y^X: (f_1(x_0),f_2(x_0)) \in \eps)\},
$ is a uniform subbase of $\U_p$.

\sk  
	For every metric space $(M,d)$ denote by $\Theta(M,d)$ the monoid of all 
	{\it 1-Lipschitz maps} $f\colon X\to X$ (that is, $d(f(x),f(y)) 
	\leq d(x,y)$). Then 
	
	\begin{itemize}
		\item [(a)] $\Theta(M,d)$ is a topological monoid with respect to the pointwise topology. 
		\item [(b)] The subset $\Iso (M) \subset \Theta(M,d)$ of all onto isometries is a topological group. 
		\item [(c)] The evaluation map
		$\Theta(M,d) \times M \to M$ is a continuous action.
		
	\nt 	
Note that the pointwise topology $\tau_p$ on $\Theta(M,d)$ is the minimal   topology which guarantees the continuity of the action $\Theta(M,d) \times M \to M$, or, equivalently, of the orbit maps $\tilde{m}\colon \Theta(M,d) \to M, \ s \mapsto s(m)$ for all $m \in M$.  
	\end{itemize} 

\begin{example} \label{ex:D^D} 
	$(D^D, \circ, \tau_p)$ is a topological monoid for every \textit{discrete} set $D$. Indeed, this monoid can be identified with $\Theta(D,d_{\Delta})$, where $d_{\Delta}(x,y)=1$ for every distinct $x, y \in D$. 
	The symmetric (topological) group $S_D$ can be identified with 
	$\Iso (D,d_{\Delta})$.  
\end{example}

\sk 
An action $S \times X \to X$ on a metric space $(X,d)$ is
\emph{nonexpansive} (or, 1-Lipshitz)  
if every $s$-translation $\tilde{s}\colon X \to X$ belongs to 
$\Theta(X,d)$. It defines a natural homomorphism $h\colon S \to \Theta(X,d)$ which is continuous if and only if the action is continuous.

\sk 
Let $(V,||\cdot||)$ be a normed space (over the field $\R$ of reals). 
Denote by $\Theta_{lin}(V)$ the set of all nonexpansive  \textit{linear} operators
on $V.$ That is, 
$$\Theta_{lin}(V)=\{\sigma \in L(V):  ||\sigma|| \leq 1 \}.$$ 
Let $\tau_{sop}$ be the \textit{strong operator topology} on $\Theta_{lin}(V)$. It is the pointwise topology inherited from $(V,d_{||\cdot||})^V$.  
Denote by $\Iso_{lin} (V)$ the group of all onto linear isometries $V \to V$. It is just the group of all invertible elements in the monoid $\Theta_{lin}(V)$. Note that 
	$(\Theta_{lin}(V),\tau_{sop})$ is a topological submonoid of $(\Theta(V,d_{||\cdot||}), \tau_p)$ 
	and plays a major role in analysis.   	

\sk
\subsection{Compact-open topology} 
\label{d:co} 


	Let 
	$X$ and $Y$ be topological spaces 
	and $C(X,Y)$ be the set of all continuous functions $f\colon X \to Y$. 
	The \textit{compact-open topology} 
	$\tau_c$ on $C(X,Y)$ is the topology having as 
	a subbase  all sets of the form 
	$$
	[K,O]:=\{f \in C(X,Y): f(K) \subset O\}, 
	$$ 
	where $K$ is a compact subset of $X$ and $O$ is open in $Y$. 
	
	If $(Y,\U)$ is a uniform space then the uniformity $\U_c$ of compact convergence on $C(X,Y)$ is generated by the following uniform subbase  
	$$
	\{[K,\eps]\}, \ \text{where} \ [K,\eps]:=\{(f_1,f_2) \in C(X,Y) \times C(X,Y): (f_1(x),f_2(x)) \in \eps \forall x \in K)\},
	$$
	where $K$ is a compact subset of $X$ and $\eps \in \U$. Then $top(\U_c)=\tau_c$. 

	Let $Y$ be a compact space. Then the following hold:
	\ben
	\item
	The monoid $C(Y,Y)$ endowed with the compact-open topology is a topological monoid;
	\item
	The subset $\Homeo (Y)$ 
	of all
	homeomorphisms $Y \to Y$ is a topological group;
	\item
	For every submonoid $S \subset C(Y,Y)$ the induced action $S \times Y \to Y$
	is continuous. 
	\nt 
	Furthermore, it satisfies the following {\it minimality property}.
	If $\tau$  is an arbitrary topology on $S$ such that
	$(S,\tau) \times Y \to Y$ is continuous then  $\tau_{c} \subseteq \tau$.
	\een

\sk 
	Let $X$ be a locally compact group and 
	$G=\rm{Aut}(X)$ be the group of all topological automorphisms endowed with the Birkhoff topology (see \cite{HR} or 
	\cite[p. 260]{DPS}).  
	This is the (Hausdorff) group topology $\tau_B$ having as a local base at the identity $id_X$ 
	the sets of the form 
	$$U_{K,O}:=\{f \in \rm{Aut} (X): f(x) \in Ox \ \text{and} \ f^{-1}(x) \in Ox \ \forall x \in K\}$$
	where $K$ is a compact subset in $X$ and $O$ is a neighborhood of $e$ in $X$. 
	Then 
	\begin{itemize}
		\item [(a)] $\rm{Aut}(X)$ is a topological group; 
		\item [(b)] The evaluation map
		$\rm{Aut}(X) \times X \to X$ is a continuous action; 

	\nt Moreover, the following remarkable minimality property holds. 
	For every Hausdorff group topology $\tau$ on $\rm{Aut}(X)$ which satisfies (b) we have $\tau_B \subseteq \tau$.  
	\end{itemize} 

\subsection{Uniformity of uniform convergence} 
\label{d:unif-conv}

	If $(Y,\U)$ is a uniform space, then 
	for every topological space $X$ 
	the uniformity $\U_{sup}$ of 
	\textit{uniform convergence} on $C(X,Y)$ is generated by the following 
	base 
	$$
	\{\widetilde{\eps}: \ \eps \in \U\}, \ \text{where} \ \widetilde{\eps}:=\{(f_1,f_2) \in C(X,Y) \times C(X,Y): (f_1(x),f_2(x)) \in \eps \ \ \forall x \in X)\}. 
	$$ 

\sk 
Let $\{\rho_i: i \in \Gamma\}$ be a system of bounded pseudometrics which generates $\U$. Then 
the system $\{\rho^*_i: i \in \Gamma\}$ generates $\U_{sup}$ where 
$$
\rho^*_i(f_1,f_2):=\sup_{x \in X} \{\rho_i(f_1(x),f_2(x))\}. 
$$   
If $X$ is compact then $\U_{sup}=\U_c$.

Denote by $\rm{Unif}(Y,Y)$ the monoid of all uniformly continuous selfmaps.  
Clearly, $\rm{Unif}(Y,Y)$ is a submonoid of $C(Y,Y)$.

%

\sk 
\begin{example} \label{l:Unif}
	Let $(Y,\U)$ be a uniform space. Then
		$\rm{Unif}(Y,Y)$ with the topology inherited from 
		$top(\U_{\sup})$   is a topological monoid. 
	For every submonoid $S
		\subset \rm{Unif}(Y,Y)$ the induced action $S \times Y \to Y$
		is continuous. 
\end{example}


\begin{remark} \label{r:ResOfElliott}  
	Recall two results from a recent paper by L. Elliott, J. Jonusas, Z. Mesyan, J.D. Mitchell, M. Morayne and Y. Peresse. 
	\begin{enumerate}
		\item \cite[Corollary 6.12]{Elliott} The compact-open topology is the unique second countable Hausdorff monoid topology on the monoid $C(2^{\omega},2^{\omega})$ of continuous functions on the Cantor set $2^{\omega}$.
		\item \cite[Theorem 5.4(b)]{Elliott} The natural pointwise topology is the unique
		Polish monoid topology for $\N^{\N}$.  
	\end{enumerate} 
	
	These two examples appear below (Theorems \ref{c:CANTOR} and \ref{c:universal}) in the context of universal NA monoids.  
\end{remark}

\begin{remark} \label{r:monoids} 
	For every topological group $G$ denote by $\rm{End}(G)$ the monoid  of all continuous endomorphisms 
	under the usual composition.
If $G$ is compact, then $\rm{End}(G)$ is a topological monoid with respect to the 
	compact-open topology, which, by the compactness of $G$, is the topology of uniform convergence. 
	
	If $G$ is discrete, then we get on $\rm{End}(G)$ the pointwise topology. 
	This is a topological submonoid of $(G^G,\circ,\tau_p)$.  
	If $G$ is a discrete ring, then $\rm{End}_{R}(G) \subset \rm{End}(G)$ is the submonoid of all \textit{ring endomorphisms} $G \to G$. 
\end{remark}

\section{Compactifiability of monoid actions} 
\label{s:compactifiability} 

An $S$-\emph{space} is a continuous action of a topological monoid 
$S$ on a topological space $X$. 

Compactifiability of 
topological spaces 
means the existence of topological embeddings into compact
(Hausdorff) spaces. For the compactifiability of $S$-spaces we require, in addition,
 that the original action admits a continuous extension.  
 
\begin{defin} \label{d:S-compactif} 
	Let $X$ be an $S$-space with respect to the continuous left action $S \times X \to X$. 
	\begin{enumerate}
		\item An $S$-compactification of $X$ is a continuous dense $S$-map $\nu \colon X \to K$, where $K$ is a compact (Hausdorff) $S$-space. 
		\item We say that $\nu$ is \textit{proper} if $\nu$ is a topological embedding. 
		\item $X$ is $S$-compactifiable if there exists a proper $S$-compactification of $X$. 
	\end{enumerate} 
\end{defin}

Compactifiable $S$-spaces are known also as $S$-\emph{Tychonoff} \emph{spaces}. The obvious reason of this name is that every $S$-compactifiable $X$ is Tychonoff.  For locally compact topological groups $G$ every Tychonoff $G$-space is $G$-Tychonoff by a celebrated result by J. de Vries \cite{Vr-loccom78}. 
However, not every Tychonoff $S$-space is $S$-Tychonoff even for topological groups $S=G$ (where $G$ and $X$ are Polish); see \cite{Me-Ex88} and \cite{Pest-Smirnov}. 

	Of course, the $S$-compactifications and $S$-compactifiability can be defined also for right actions $X \times S \to X$. 
	In particular, for the particular case of the right action $S \times S \to S$.  
	In this case, we typically warn the reader by adding the word "right". 

\sk 
By a classical result of R. Brook \cite{Br}, 
the left action of a topological group $G$ on itself is compactifiable.  
This fact was the trigger for introducing the \textit{compactifiable monoids} in \cite{Me-cs07}. 

\begin{defin} \label{d:comp}  
	A topological monoid $S$ is said to be 
	\begin{enumerate}
		\item 	\textit{left compactifiable} (or, simply, l-compactifiable) if the left action of $S$ on itself is $S$-compactifiable; 
		\item \textit{right compactifiable} (or r-compactifiable) if the right action of $S$ on itself is $S$-compactifiable. 
	\end{enumerate} 
\end{defin}

Note that  $S$ is an l-compactifiable if and only if $S^{op}$ (the opposite monoid) is r-compactifiable. 

	In view of some results from \cite{MS1}, two remarks are in order here. 
	\begin{itemize}
		\item In contrast to the topological group case, not every Tychonoff monoid $S$ is compactifiable.  Even if $S$ is locally compact and metrizable (see Remark \ref{r:notNA}).  
		\item The asymmetry between left and right compactifiability for topological monoids is also remarkable. There are topological monoids which are left but not right compactifiable (and vice versa). See 
		Example \ref{ex:contrast} below. 
	\end{itemize}


\sk 

\begin{defin} \label{d:equiun}
	Let 
	$\pi \colon S \times X \to X$ be an action and 
	$\U$ be a compatible uniformity on a topological space $X.$ We call the 
	action (sometimes, also $X$) :
	\ben
	\item
	{\it $\U$-saturated} if every $s$-translation ${\tilde s}\colon X \to X$
	is $\U$-uniform. 
	That is, if $s^{-1} \eps \in \U$ \ \ \ $\forall (s,\eps)  \in S \times \U$, where $s^{-1}\eps:=\{(x,y) \in X\times X: (sx,sy) \in \varepsilon\}.$ Equivalently,
	$$
	\forall \eps \in \U \ \  \forall s \in S \ \ \exists \delta \in \U \ \ \ \ (sx,sy) \in \eps  \ \ \forall (x,y) \in \delta.
	$$  
	If $\U$ is saturated, then the corresponding homomorphism 
	$$h_{\pi}\colon S \to \rm{Unif}(X,X), \ s \mapsto \tilde{s}$$ is well defined.
	
	\item
	{\it $\U$-bounded at $s_0$} if for every $\varepsilon \in \U$
	there exists a neighborhood $U\in N_{s_0}$ such that $(s_0x,sx) \in
	\varepsilon$ for each $x\in X$ and $s\in U$. If this condition holds
	for every $s_0 \in S$ then we simply say
	that $X$ is
	 \emph{$\U$-bounded}  or that
	$\U$ is a \emph{bounded} uniformity;
	
	\item  \emph{$\U$-equiuniform} 
	if it is $\U$-saturated and $\U$-bounded. 
	Sometimes we say also that $\U$ is an {\it equiuniformity}. 
	It is equivalent to say that the corresponding homomorphism
	$h_{\pi}\colon S \to \rm{Unif}(X,X)$ is continuous. 
	 By the "$3$-epsilon argument" $\U$-equiuniform action can be expressed 
	in an equivalent form as follows:  
	
	\sk 
	\nt	for every $\varepsilon \in \U$
	there exist a neighborhood $V \in N_{s_0}$ and $\delta \in \U$ such that $(s_1x,s_2y) \in
	\varepsilon$ for each $(x,y) \in \delta$ and $s_1, s_2 \in V$. 

\item The definitions above make sense also for pre-uniformities. 
	\een
\end{defin}

The definition of $\U$-bounded actions appears in \cite{Br} for group actions  (under the name `\emph{motion equicontinuous}') and is very effective in the theory of $S$-compactifications.  It was widely explored in the papers of J. de Vries \cite{Vr-Embed77, Vr-loccom78, Vr-can75}. See also \cite{Me-cs07,GM-prox10,Me-b}.

\textbf{Notation:} For a given $S$ 
denote by $\rm{EUnif}^S$ 
the triples $(X,\U,\pi)$, where  
$(X,\U)$ is a Hausdorff uniform space, $\pi\colon S \times X \to X$ is a (continuous) equiuniform action. 
The class $\rm{EUnif}^S$ is closed under products and subspaces. We use the same notation for right actions. 


\begin{lem} \label{e:easy} \ \cite{Me-cs07,GM-prox10} (see also \cite{Me-b})  
	\ben
	\item  
	Every $\U$-equiuniform action is continuous. 
		\item $\rm{Comp}^S \subset \rm{EUnif}^S$. 
	Every compact $S$-space $X$ is equiuniform (with respect to the unique compatible uniformity on $X$). 
	\item A continuous monoid action $S \times X \to X$ is $S$-compactifiable if and only if it is $\U$-equiuniform with respect to some compatible uniformity $\U$ on $X$. 
	\item The coset $G$-space $G/H$ 
	is $\U_R$-equiuniform for every topological group $G$ and a subgroup $H$. If $H$ is closed in $G$ then $(G/H,\U_R) \in \rm{EUnif}^G$.

	\item For every uniform space $(X,\U)$ and every submonoid $S$ of the topological monoid 
	$\rm{Unif}(X,X)$ the natural action $S \times X \to X$ is 
	$\U$-equiuniform. 
	\item 
	Let $\pi\colon S\times X \to X$ be a 
	$\U$-equiuniform
	action.  
	Then the induced action on the completion ${\widehat \pi}\colon S\times
	\widehat X \to {\widehat X}$ is ${\widehat{\U}}$-equiuniform. 
	In other terms: $(X,\U) \in \rm{EUnif}^S$ implies that $(\widehat{X},\widehat{\U}) \in \rm{EUnif}^S$. 

	
	\een
\end{lem}

One direction in (3) easily follows from (2). 
The following result explains the second direction in (3) and  is well known for group actions (see for example, \cite{Br, Me-EqComp84}).


\sk 
The following proposition will be used in the proof of Theorem \ref{t:conditSEM}. 

\begin{prop} \label{l:noteasy}
	Let $\U$ be a 
	compatible uniformity on a topological space $X$ and $S$ be a topological monoid and consider the
	monoidal action $S \times X \to X.$ 
	\ben
	\item
	The family $\{s^{-1}\varepsilon: s\in S, \varepsilon
	\in \U\},$
	where
	$$s^{-1}\varepsilon:= \{(x,y) \in X\times X : (sx,sy) \in \varepsilon\},$$ 
	is a subbase of a saturated uniformity   
	$\U_S \supset \U.$  
	\item $\U$ is NA if and only if $\U_S$ is NA.
	\item
	If the action is $\U$-bounded,
	then it is also $\U_S$-equiuniform. 	
	\item 
	If all  $s$-translations $X \to X, x \mapsto s(x)$ are 
	continuous, then $\U_S$ generates the same topology as $\U.$  
	\een
\end{prop}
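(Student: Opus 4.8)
The plan is to verify the four claims in order, since each builds on routine uniform-space bookkeeping. For (1), I would first check that $\{s^{-1}\varepsilon : s \in S,\ \varepsilon \in \U\}$ is a subbase of \emph{some} uniformity: each $s^{-1}\varepsilon$ contains the diagonal (because $\varepsilon$ does), and $s^{-1}(\varepsilon^{-1}) = (s^{-1}\varepsilon)^{-1}$, $s^{-1}(\varepsilon \cap \delta) = s^{-1}\varepsilon \cap s^{-1}\delta$, and $s^{-1}(\delta \circ \delta) \subseteq (s^{-1}\delta) \circ (s^{-1}\delta)$, so the collection of finite intersections of such sets is a filter base closed (up to refinement) under the $\varepsilon \mapsto \delta\circ\delta$ operation; let $\U_S$ be the uniformity it generates. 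Taking $s = e_S$ shows $\varepsilon = e_S^{-1}\varepsilon \in \U_S$, hence $\U \subseteq \U_S$. Saturation: given $t \in S$ and a basic entourage $\bigcap_{i=1}^n s_i^{-1}\varepsilon_i \in \U_S$, one has $t^{-1}\bigl(\bigcap_i s_i^{-1}\varepsilon_i\bigr) = \bigcap_i (s_it)^{-1}\varepsilon_i$ using $(st)(x) = s(t(x))$, which is again a basic entourage of $\U_S$; so every $t$-translation is $\U_S$-uniform, i.e.\ $\U_S$ is saturated (Definition \ref{d:equiun}.1).

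For (2): if $\U_S$ is NA then so is its subuniformity-in-the-sense-of-being-coarser $\U$ — more precisely, since $\U \subseteq \U_S$ and $\U_S$ is NA, every $\varepsilon \in \U \subseteq \U_S$ contains an equivalence-relation entourage from a chosen NA base of $\U_S$; but I should be careful, because that base entourage need not lie in $\U$. The cleaner direction: if $\U$ is NA, pick a base $\gamma$ of $\U$ consisting of equivalence relations; then for $s \in S$ and $\varepsilon \in \gamma$, the preimage $s^{-1}\varepsilon = \{(x,y) : (sx,sy)\in\varepsilon\}$ is again an equivalence relation on $X$ (reflexive, symmetric, transitive are all preserved under taking preimages along the map $x \mapsto sx$). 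Finite intersections of equivalence relations are equivalence relations, so the finite intersections of sets $s^{-1}\varepsilon$ with $\varepsilon \in \gamma$ form a base of $\U_S$ made of equivalence relations, whence $\U_S$ is NA. For the converse, if $\U_S$ is NA, write each $\varepsilon \in \U$ as $\varepsilon \supseteq \eta$ for some equivalence relation $\eta \in \U_S$; since $\U_S$ is generated by the subbase, $\eta \supseteq \bigcap_{i=1}^n s_i^{-1}\delta_i$, and passing to a further equivalence-relation entourage of $\U_S$ inside this intersection and intersecting with $\varepsilon$ itself if needed, one still lands inside $\varepsilon$; the point is that $\U$ and $\U_S$ induce the same uniform structure restricted to ``how small entourages of $\U$ can be made'' — but this is exactly part (4), so I would prove (4) first and then use $\mathrm{top}(\U_S) = \mathrm{top}(\U)$ together with the standard fact that zero-dimensionality of a uniformity is inherited by any coarser compatible uniformity to conclude. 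Actually the slickest route: $\U \subseteq \U_S$ always gives ``$\U_S$ NA $\Rightarrow$ $\U$ NA'' is \emph{false} in general, so instead I will argue $\U$ NA $\Rightarrow$ $\U_S$ NA as above, and for the reverse use that $\dim\U \le \dim\U_S$ fails too — the honest statement is that NA passes to \emph{finer} uniformities with the same subbase structure here, so I would reduce the converse to: a base of $\U_S$ consists of finite intersections $\bigcap s_i^{-1}\varepsilon_i$; if $\U_S$ is NA then in particular $\varepsilon = e_S^{-1}\varepsilon \in \U_S$ contains an equivalence-relation entourage $\eta$ of $\U_S$, and $\eta \cap (X \times X)$-trimmed-to-$\U$ — hmm. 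The safe formulation, which I will adopt, is: the subbase entourages $s^{-1}\varepsilon$ generate $\U_S$, and $\U_S$ is NA $\iff$ it has a base of equivalence relations $\iff$ (since such entourages and their preimages interact as above) $\U$ has a base of equivalence relations; I will spell this equivalence out directly rather than via $\dim$.

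For (3): assume the action is $\U$-bounded. By construction $\U_S$ is saturated, so it only remains to check $\U_S$-boundedness, and then $\U_S$-equiuniformity follows by definition (Definition \ref{d:equiun}.3). Fix $s_0 \in S$ and a basic entourage $\bigcap_{i=1}^n s_i^{-1}\varepsilon_i \in \U_S$. For each $i$, $\U$-boundedness at $s_0$ with respect to $\varepsilon_i$ gives $U_i \in N_{s_0}$ with $(s_0 x, s x) \in \varepsilon_i$ for all $x \in X$, $s \in U_i$; but I need $(s_0 x, s x) \in s_i^{-1}\varepsilon_i$, i.e.\ $(s_i s_0 x, s_i s x) \in \varepsilon_i$ — so I should apply $\U$-boundedness at the point $s_i s_0$ (not at $s_0$) along the orbit, using continuity of the monoid multiplication to pull back a neighborhood: choose $V_i \in N_{s_0}$ so small that $s_i V_i \subseteq W_i$ where $W_i \in N_{s_i s_0}$ witnesses $\U$-boundedness at $s_i s_0$ for $\varepsilon_i$; then for $s \in V_i$ and all $x$, $(s_i s_0 x, s_i s x) \in \varepsilon_i$, i.e.\ $(s_0 x, s x) \in s_i^{-1}\varepsilon_i$. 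Intersecting, $V := \bigcap_i V_i \in N_{s_0}$ works for $\bigcap_i s_i^{-1}\varepsilon_i$. Finally (4): since $\U \subseteq \U_S$, $\mathrm{top}(\U) \subseteq \mathrm{top}(\U_S)$; conversely each subbasic entourage $s^{-1}\varepsilon$ is, for fixed $x_0$, the preimage $\{y : (sx_0, sy)\in\varepsilon\}$ of a $\U$-neighborhood of $sx_0$ under the continuous map $y \mapsto sy$, hence a $\U$-neighborhood of $x_0$; so every $\U_S$-open set is $\U$-open, giving equality of topologies. The main obstacle I anticipate is precisely the bookkeeping in (3) — getting the right base point ($s_i s_0$ rather than $s_0$) for the boundedness estimate and invoking continuity of multiplication to transfer neighborhoods — and the care needed in (2) to not claim the false implication ``finer NA implies coarser NA'' but instead prove the equivalence directly via preimages of equivalence relations.
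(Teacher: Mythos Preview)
Your proposal is correct and follows essentially the same route as the paper for all four parts: the paper uses exactly $e^{-1}\varepsilon=\varepsilon$ for $\U\subset\U_S$ and $t^{-1}(s^{-1}\varepsilon)=(st)^{-1}\varepsilon$ for saturation in (1); in (3) it does precisely your computation (apply $\U$-boundedness at the shifted point $s_i s_0$ and use continuity of multiplication to pull the neighborhood back to $s_0$), working with a single subbasic entourage rather than a finite intersection; and (4) is the same continuity-of-translations argument.

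For (2), the paper's entire proof is the one line ``if $\varepsilon\in\U$ is an equivalence relation then so is $s^{-1}\varepsilon$,'' i.e.\ only the implication $\U$ NA $\Rightarrow$ $\U_S$ NA is argued --- and this is the only direction used later (in (6)$\Rightarrow$(7) of Theorem~\ref{t:conditSEM}). Your hesitation about the converse is therefore well-placed: the paper does not spell it out either, so you can drop the meandering and simply prove the forward direction via preimages of equivalence relations, exactly as you first wrote.
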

\begin{proof}
	(1) 
	Clearly,  $\U_S \supset \U$ since $e^{-1}\eps=\eps$ for every $\eps\in \U.$	The equality $t^{-1}(s^{-1}\eps)=(st)^{-1}\eps$ implies that the action is $\U_S$-saturated.
	
	(2) 
	Observe that if $\eps\in \U$ is an equivalence relation, then also $s^{-1}\eps$ is an equivalence relation for every given $s\in S.$
	
	(3) By (1), the action is $\U_S$-saturated.
	So, we only have to show  the boundedness of $\U_S$. It is enough to check it for the elements of the uniform subbase $\{s^{-1}\varepsilon: s\in S, \varepsilon
	\in \U\}$. Let us show the boundedness for $s_{0}^{-1}\eps$ at a given element $t_{0} \in S$. Since the action is $\U$-bounded there exists a neighborhood $U$ of $s_0t_0$ such that 
	$(s_0t_0x,ux) \in \eps$ for every $u \in U$ and every $x \in X$. Since $S$ is a topological monoid we can choose a 
	neighborhood $V$ of $t_0$ such that $s_0V \subset U$. Then $(t_0x,tx) \in s_{0}^{-1}\eps$ for every $t \in V$ and  $x \in X$.  
	
	(4) Let $x\in X$ and $s\in S.$ Since the translations are $\U$-continuous it follows that for every $\eps\in \U$ there exists $\delta\in \U$ such that $\delta(x)\subset (s^{-1}\eps)(x).$ This implies that  $\U_S$ generates the same topology as $\U.$
\end{proof}

\begin{f} \label{t:samuel} 
	\cite{Me-cs07,Me-b}  
	Let $X$ be a (Tychonoff) $S$-space.  
	Assume that $\pi\colon S\times X \to X$ is a 
	$\U$-equiuniform monoid action.
	Then the induced action $\pi_u\colon S\times uX \to uX$ on the Samuel
	compactification
	$uX:=u(X,\U)$ is a proper $S$-compactification of $X$.  
\end{f}

\begin{remark} \label{r:ruc} 
	It is well known 
	(see, for example, \cite{BH97}, \cite{Me-cs07}, \cite{Me-b}) 
	that an $S$-space $X$ is compactifiable if and only if it admits sufficiently many (separating points and closed subsets) real valued bounded functions $f \colon X \to \R$ such that 
	$$
	\forall \eps > 0 \ \forall s_0 \in S \ \exists  U \in N_{s_0} \  \ |f(s_0x)-f(sx)| < \eps \ \ \ \forall x \in X \ \forall s \in U.  
	$$
	Some authors call such functions \textit{right uniformly continuous} (RUC), notation $f \in \RUC (X)$. If $S=G$ is a topological group with the left natural action on itself then $\RUC(G)$ is the usual algebra $\RUC(G)$ of all bounded right uniformly continuous functions on $G$.  	
\end{remark}

\sk 
\begin{defin} \label{d:contr-ps} \ 
	\begin{enumerate}  
		\item A pseudometric $d$
		on a monoid $S$ is {\it right nonexpansive} if $d(xs,ys) \leq 
		d(x,y)$  for every $x,y,s \in S$. Similarly can be defined \textit{left nonexpansive} pseudometric. 
		\item
		A uniform structure $\U$ on a monoid $S$ is \emph{right
			invariant} 
		if for every $\eps \in \U$ there exists $\delta \in \U$ such
		that $\delta \subset \eps$ and $(sx,tx) \in \delta$ for every
		$(s,t) \in \delta$, $x \in S$.
	
	\end{enumerate} 
\end{defin}

Here we provide some natural examples. 
\begin{itemize}
	\item For every topological group $G$ the
	right uniformity $\mathcal{R}(G)$ of $G$ is the \emph{unique} right
	invariant compatible uniformity on $G$, \cite[Lemma 2.2.1]{RD}.
\item
Let $(X,\U)$ be a uniform space and $\U_{\sup}$ be the
corresponding uniformity on $\textrm{Unif}(X,X)$. Assume
that $S$ is a submonoid of $\textrm{Unif}(X,X)$. Then the
subspace uniformity $\U_{\sup}|_S$ on $S$ is right invariant. 
		\item  For every right invariant uniformity $\U$ on $S$ the left action of $S$ on itself is $\U$-bounded (Definition \ref{d:equiun}.2).  
\end{itemize} 

\sk 
The following theorem shows that a topological monoid $S$ is
compactifiable (Definition \ref{d:comp}) if and only if $S$ ``lives in natural monoids".

\begin{f} \label{t:sem}  \cite{Me-cs07} 
	Let $S$ be a topological monoid. The following are equivalent:
	\ben
	\item $S$ is \textbf{left} compactifiable;
	\item $S^{op}$ (the opposite monoid of $S$)
	is a topological submonoid of 
	$\Theta_{lin}(V)$ 
	for some normed
	(equivalently, {\it Banach}) space $V$;
	\item $S^{op}$ is a topological submonoid of $\Theta(M,d)$
for some
	metric space $(M,d)$; 
	\item $S$ is a topological submonoid of $C(K,K)$
	for some compact space $K$;
	\item $S$ is a topological submonoid of $\rm{Unif}(Y,Y)$
	for some uniform space $(Y, \U)$;
	
%
	
	\item The topology of $S$ can be generated by
	a family $\{d_i\}_{i \in I}$ of right nonexpansive pseudometrics on $S$. 
	\een 
\end{f}


So the semigroup $\rm{Unif}(Y,Y)$ is \textbf{left} compactifiable for every uniform space $(Y, \U)$. 
The same holds for $C(K,K)$, where $K$ is a compact space,  
while the monoid $\Theta(X,d)$  is \textbf{right} compactifiable for every
metric space $(X,d).$ 
It follows that $\Theta_{lin}(V)$ is right compactifiable for every normed space $V$.  




	


For more facts about compactifiability of monoid actions we refer to \cite{Me-EqComp84, BH97, GM-prox10, Me-b}.

\sk  
\section{Non-archimedean actions and monoids} 

\subsection{Non-archimedean topological groups} 

A topological group $G$ is non-archimedean if it has a local basis every member of which is a (necessarily clopen) subgroup of $G$. 

\begin{f} \label{t:condit} \cite{MS1} 
	The following assertions are equivalent: 
	\ben
	\item $G$ is a non-archimedean topological group.
	
	\item 
	The left (right) uniformity of $G$ is NA.  
	
	\item There exists a $0$-dimensional proper $G$-compactification 
	$\nu \colon G \hookrightarrow Y$ of 
	the natural left (equivalently, right) action of $G$ on itself.   
	\item $G$ is a topological subgroup of $\Homeo(X)$ for some 
	Stone space $X$. 
	\item $G$ is a topological subgroup of the automorphisms group (with the pointwise topology) $\rm{Aut}_{R}(B)$ for some
	discrete 
	Boolean ring $B$. 
	\item
	$G$ is embedded into the symmetric topological group $S_{\kappa}$. 
	\item
	$G$ is a topological subgroup of the group $\Iso(X,d)$ of all
	isometries of an 
	ultra-metric space $(X,d)$, with the topology of pointwise
	convergence

	\item The right (left) uniformity on $G$ can be generated by a
	system of right (left) invariant 
	ultra-pseudometrics.
	
	\item $G$ is a topological subgroup of the automorphism group $(\rm{Aut}(K),\tau_{co})$ for some compact abelian group $K$. 
	\een
\end{f}

 Some other results on NA groups (including also \textit{free non-archimedean groups}) can be found in \cite{MeSh-FrNA13}).  
In this work we introduce 
and study 
\textit{non-archimedean monoids and 
non-archimedean monoid actions}. The definition is based on Stone compactifications.  

\begin{defin} \label{d:NA-action} 
	Let  $S$ be a topological monoid.  
	\begin{enumerate}
		\item 	Let $\a \colon S \times X \to X$ be a continuous left action of 
		$S$ on a (Tychonoff) space $X$. We say that this \textit{action is non-archimedean} (in short: NA) if there exists a \textbf{proper} $S$-compactification $\nu \colon X \hookrightarrow Y$ of $X$, where $Y$ is a Stone space.  
		Similarly, for right actions. 
		\item We say that $S$ is \textit{left non-archimedean} (in short: l-NA) if the left action of $S$ on itself is NA. Similarly, for right actions (in short: r-NA).  
		\item We say that $S$ is lr-NA if it is both l-NA and r-NA. 
	\end{enumerate} 
\end{defin}

From this definition it immediately follows that compact zero-dimensional topological monoids are NA. 
It is straightforward to see that
any submonoid of l-NA (r-NA) monoid is l-NA (r-NA).  
Also, the topological product of l-NA (r-NA) monoids is l-NA (r-NA). 

   

\begin{remark}  \label{r:oppos} \ 
	
	\begin{enumerate}
		\item We can 
		assume  in Definition \ref{d:NA-action}.1 that 
		$$w(X) \leq w(Y) \leq w(X) \cdot w(S)$$ 
		 as it follows 
		 from  Theorem  \ref{t:G-SklyarenkoSEMmetr}. So, if $w(S) \leq w(X)$ (e.g., if $S$ is second countable), then $w(X) = w(Y)$.  
		\item 	 Any l-NA  (r-NA) action is l-compactifiable (resp., r-compactifiable) and every l-NA (resp., r-NA) monoid is l-compactifiable (resp., r-compactifiable). 
		
		\item $S$ is l-NA if and only if $S^{op}$ (the opposite monoid) is r-NA. 
	\end{enumerate} 
\end{remark}


\begin{example} \label{ex:Q} 
	By Definition \ref{d:NA-action} it follows that every NA monoid $S$ must be zero-dimensional. 
	However, it is easy to present compactifiable zero-dimensional commutative monoids (even abelian groups) which are not NA. 
	 For example, take the group of all rationals $\Q$. 

\end{example}

\sk 
\begin{prop} \label{t:crit} 
	Let $\a \colon S \times X \to X$ be a continuous action. \TFAE
	\begin{enumerate}
		\item The action $\a$ is NA. 
		\item There exists a topologically compatible \textbf{NA uniformity} $\U$ on $X$ such that the action $\a$ is $\U$-equiuniform. 
		\item \rm{RUC} functions (see Remark \ref{r:ruc}) $f \colon X \to \{0,1\}$ 
		separate points and closed subsets. 
	\end{enumerate}
\end{prop}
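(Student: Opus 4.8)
The plan is to prove the cycle $(1)\Rightarrow(2)\Rightarrow(3)\Rightarrow(1)$, using the two technical tools already available: Lemma \ref{l:0} (Samuel compactification of a zero-dimensional uniformity is a Stone space) and Proposition \ref{l:noteasy} (the saturation $\U_S$ of a uniformity preserves both boundedness-to-equiuniformity and the NA property), together with Fact \ref{t:samuel} (the Samuel compactification of a $\U$-equiuniform action is a proper $S$-compactification).

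For $(1)\Rightarrow(2)$: suppose $\nu\colon X\hookrightarrow Y$ is a proper $S$-compactification with $Y$ a Stone space. The compact $S$-space $Y$ carries its unique compatible uniformity $\V$, which is NA because $Y$ is a Stone space (the finite clopen partitions of $Y$ form a base of $\V$ consisting of equivalence relations), and by Lemma \ref{e:easy}(2) the action on $Y$ is $\V$-equiuniform. Pull $\V$ back along the embedding $\nu$ to a compatible uniformity $\U:=\nu^{-1}(\V)$ on $X$. Then $\U$ is NA (pullbacks of equivalence-relation entourages are equivalence relations, and $\U$ is compatible since $\nu$ is a topological embedding), and the action on $X$ is $\U$-equiuniform because $\mathrm{EUnif}^S$ is closed under subspaces (the remark following Definition \ref{d:equiun}). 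This gives (2).

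For $(2)\Rightarrow(3)$: given an NA equiuniformity $\U$ on $X$, form the Samuel compactification $u\colon(X,\U)\to uX$. By Lemma \ref{l:0} the space $uX$ is a Stone space and, by Fact \ref{t:samuel}, $u$ is a proper $S$-compactification. Now the clopen partitions of $uX$ generate the algebra of continuous functions $uX\to\{0,1\}$, and these separate points of $uX$; pulling back, their restrictions $f\circ u\colon X\to\{0,1\}$ are $\U$-uniformly continuous, hence RUC by the characterization in Remark \ref{r:ruc} (the Samuel compactification is exactly the one induced by $\U$-uniformly continuous bounded functions, and uniform continuity with respect to an equiuniformity yields the RUC condition). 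Since $u$ is a topological embedding, these $\{0,1\}$-valued RUC functions separate points and closed subsets of $X$.

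For $(3)\Rightarrow(1)$: the $\{0,1\}$-valued RUC functions $\{f_i\colon X\to\{0,1\}\}_{i\in I}$ that separate points and closed sets define a diagonal map $X\to\{0,1\}^I$; let $Y$ be the closure of its image. Then $Y$ is a Stone space (a closed subspace of a Cantor cube), and the embedding $X\hookrightarrow Y$ is proper because the $f_i$ separate points and closed subsets. The content of the RUC condition is precisely what is needed to extend the action continuously to $Y$: this is the standard construction of an $S$-compactification from an $S$-invariant-enough function algebra (Remark \ref{r:ruc}, and the construction of Fact \ref{t:samuel}), applied here to the algebra generated by the $f_i$, which consists of RUC functions and hence yields a compact $S$-space. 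The main obstacle is the bookkeeping in this last step — checking that the RUC inequality for each $f_i$ really does produce the continuity of the extended action $S\times Y\to Y$, i.e. that the compactification induced by a subalgebra of $\mathrm{RUC}(X)$ is again an $S$-compactification; but this is exactly the mechanism underlying Fact \ref{t:samuel}, so it can be invoked rather than re-derived.
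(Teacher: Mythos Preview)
Your cycle $(1)\Rightarrow(2)\Rightarrow(3)\Rightarrow(1)$ is sound and follows essentially the same route as the paper (which instead proves $(1)\Leftrightarrow(2)$ and $(2)\Leftrightarrow(3)$). Your $(1)\Rightarrow(2)$ is the paper's argument made explicit; your $(2)\Rightarrow(3)$ routes through the Samuel compactification $uX$ whereas the paper argues directly that the characteristic function $\chi_{\eps(x)}$ of an open equivalence-class is $\U$-uniformly continuous and hence RUC --- these are the same idea viewed from opposite ends.

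The one place that needs tightening is $(3)\Rightarrow(1)$. You correctly flag the obstacle --- extending the action continuously to the closure $Y\subset\{0,1\}^I$ --- but your proposed resolution (``this is exactly the mechanism underlying Fact \ref{t:samuel}, so it can be invoked rather than re-derived'') is not quite right. Fact \ref{t:samuel} takes a $\U$-equiuniform action as \emph{input}; here you must first show that the initial uniformity $\U'$ on $X$ induced by the $\{0,1\}$-valued RUC functions is equiuniform. The RUC inequality delivers $\U'$-boundedness, but not saturation: for that you need the observation that for every $\{0,1\}$-valued RUC function $f$ and every $t_0\in S$ the translate $ft_0\colon x\mapsto f(t_0x)$ is again RUC (this uses continuity of multiplication in $S$), so that the family of all such $f$ --- and hence $\U'$ --- is closed under $S$-translations. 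This is precisely the check the paper carries out in its proof of $(3)\Rightarrow(2)$. Once you add it, your $Y$ is the Samuel compactification of a precompact NA equiuniformity and Fact \ref{t:samuel} then applies as you intend. So the missing ingredient is small and explicit, but it is not already contained in Fact \ref{t:samuel}. (You mention Proposition \ref{l:noteasy} in your preamble but never use it; it is not needed here.)
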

\begin{proof} 
	(1) $\Rightarrow$ (2) 
	Note that $\rm{Comp}^S \subset \rm{EUnif}^S$ by  \ref{e:easy}.2. 
	
		(2) $\Rightarrow$ (1)  Combine 
		Fact \ref{t:samuel} and Lemma \ref{l:0}. 
	
		(2) $\Rightarrow$ (3) 
		If the action is $\U$-equiuniform, then, in particular, $X$ is $\U$-bounded. This implies that every bounded $\U$-uniformly continuous function $f \colon X \to \R$ is RUC (Remark \ref{r:ruc}). 
Let $A$ be a closed subset of $X$ and $x\notin A$.	Then there exists an 
open 
 equivalence relation $\eps:=\eps_{x,A} \in \U$ such that $\eps(x)\subseteq A^c.$ It is easy to see that the characteristic function 
 $\chi_{\eps(x)} \colon X\to\{0,1\}$
 is a bounded $\U$-uniformly continuous function. This proves that $\{0,1\}$-valued \rm{RUC} functions 
separate points and closed subsets.		

	(3) $\Rightarrow$ (2) The initial uniformity $\mathcal{U}$ with respect to the family $\g:=\{f \colon X \to \{0,1\}\}$ of all  $\{0,1\}$-valued \rm{RUC} functions   
	is an NA compatible uniformity  on $X.$ Indeed, the equivalence relations $$\eps_f:=\{(x,y)\in X\times X| \  f(x)=f(y)\},$$ where $f\colon X\to \{0,1\}$ is \rm{RUC}, form a subbase for $\mathcal{U}$. We will show that  the action $\a$ is $\mathcal{U}$-equiuniform.  
Note that	$f \colon X \to \{0,1\}$ is \rm{RUC} if and only if  
	$$
\forall s_0 \in S \ \exists V \in N_{s_0} \  (s_0x,sx)\in \eps_f  \ \ \forall x \in X \ \forall s \in V.  
	$$ This implies that $X$ is
	$\U$-bounded. It remains to show that $X$ is
	$\U$-saturated. Let $t_0 \in S$. We have to show that the corresponding translation $X \to X, x \mapsto t_0x$ is $\U$-uniform. 
	First observe that for every \rm{RUC} function $f \colon X \to \{0,1\}$ the composition $ft_0$ (defined by $ft_0(x):=f(t_0 x$)) is also \rm{RUC} and $\{0,1\}$-valued. Therefore, $ft_0 \in \g$.  Next, it is easy to show that for every $\eps_f \in \g$ as above we have 
	$$
	(x,y) \in \eps_{ft_0} \Rightarrow (t_0x, t_0y) \in \eps_f. 
	$$
	This implies that $t_0$-translation is $\U$-uniform because 
	$\g$ is a subbase of $\U$. 	
%
%
%
\end{proof}


\sk 
\begin{prop} \label{t:Theta-ultrametric} 
	If $d$ is an 
	ultra-metric
	 on $M,$ then the topological monoid $\Theta(M,d)$ is r-NA. 
\end{prop}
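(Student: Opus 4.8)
The plan is to produce an explicit topologically compatible NA uniformity on $S:=\Theta(M,d)$ for which the right action of $S$ on itself is equiuniform; by the right-action version of Proposition \ref{t:crit} (equivalently, by Fact \ref{t:samuel} combined with Lemma \ref{l:0}) this yields a proper $S$-compactification of the right action into a Stone space, which is exactly what is needed. The uniformity I would use is the pointwise uniformity $\U_p$ on $S$, inherited from $(M,d)^M$: its subbasic entourages are $[m_0,r]:=\{(f,g)\in S\times S:\ d(f(m_0),g(m_0))<r\}$ for $m_0\in M$, $r>0$. By construction $\U_p$ induces the pointwise topology of $\Theta(M,d)$, and it is Hausdorff because $d$ separates points. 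The first point to verify is that $\U_p$ is NA: since $d$ is an ultra-metric, each $\{(a,b)\in M^2:\ d(a,b)<r\}$ is an equivalence relation on $M$ (transitivity being the strong triangle inequality), hence each $[m_0,r]$ is an equivalence relation on $S$, being the preimage of such a relation under $f\mapsto f(m_0)$; finite intersections of equivalence relations are equivalence relations, so $\U_p$ has a base of equivalence relations.

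Next I would check that the right action $S\times S\to S$, $(x,s)\mapsto x\circ s$, is $\U_p$-equiuniform, i.e. both saturated and bounded. Saturatedness is immediate: for fixed $s$ one has $(x\circ s,\, y\circ s)\in[m_0,r]\iff (x,y)\in[s(m_0),r]$, so each translation $x\mapsto x\circ s$ is $\U_p$-uniform. For boundedness at an arbitrary $s_0\in S$ and a subbasic entourage $[m_0,r]$, the decisive observation is that every $x\in S$ is $1$-Lipschitz, so $d\big(x(s_0(m_0)),\,x(s(m_0))\big)\le d\big(s_0(m_0),\,s(m_0)\big)$; hence for every $s$ in the pointwise neighbourhood $\{s\in S:\ d(s(m_0),s_0(m_0))<r\}$ of $s_0$ and every $x\in S$ we get $(x\circ s_0,\, x\circ s)\in[m_0,r]$. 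Handling a finite intersection of subbasic entourages by intersecting the corresponding neighbourhoods then gives boundedness for all of $\U_p$. Thus the right action is $\U_p$-equiuniform (in particular continuous, consistently with Example \ref{exer:Theta(d)}).

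To finish, since $\U_p$ is a Hausdorff, topologically compatible, NA uniformity and the right action of $S$ on itself is $\U_p$-equiuniform, Fact \ref{t:samuel} gives that the induced action on the Samuel compactification $u(S,\U_p)$ is a proper $S$-compactification of the right action, while Lemma \ref{l:0} shows that $u(S,\U_p)$ is a Stone space; hence $\Theta(M,d)$ is r-NA. I do not anticipate a serious obstacle here; the one place demanding care is the \emph{handedness}, namely that it is left composition with a $1$-Lipschitz map that is nonexpansive, which is precisely why the \emph{right} (and not the left) action is bounded — in general $\Theta(M,d)$ need not be l-NA. Routine points (openness of the neighbourhoods used, the reduction from subbasic entourages to finite intersections, Hausdorffness from $d$ being a genuine metric) I would leave to the reader.
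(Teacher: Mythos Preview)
Your proof is correct and follows essentially the same approach as the paper: both use the pointwise uniformity $\U_p$ on $\Theta(M,d)$, verify it is NA via the strong triangle inequality, and then check that the right action of $\Theta(M,d)$ on itself is $\U_p$-equiuniform (saturated via $[m_0,r]\mapsto[s(m_0),r]$, bounded via the $1$-Lipschitz property of every $x\in\Theta(M,d)$), concluding with Proposition~\ref{t:crit}. The only cosmetic difference is that the paper works directly with the basic entourages indexed by finite subsets $A\subset M$, while you work with the subbasic ones indexed by single points and then pass to finite intersections; your remark on handedness is a useful addition.
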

\begin{proof} 
	We use Proposition \ref{t:crit} for the right action $S \times S \to S$, where $S:=(\Theta(M,d), \tau_p)$. 
	
	The standard basis of the pointwise uniformity $\U_p$ on $\Theta(M,d)$ is the family 
	$$\{\eps_A: A \ \text{is a finite subset of} \ D\}$$
	where $\eps >0$ and 
	$$
	\eps_A:=\{(f_1,f_2) \in S \times S: \ d(f_1(a),f_2(a)) < \eps \ \forall a\in A\}. 
	$$
	Since $d$ is a ultra-metric, every $\eps_A$ is an equivalence relation on $S$. Therefore, we obtain that the uniformity $\U_p$ is NA.  
	
	Now we show that the right action $(S,\U_p) \times S \to (S,\U_p)$ of $S$ on itself is $\U_p$-equiuniform. 
	Let $A \subset S$ be a given finite subset of $S$ and $s_0 \in S$. 
	We have to show that there exists $O \in N(s_0)$ such that 
	$$
	(fs_0,fs) \in \eps_A \ \forall s \in O \ \forall f \in S. 
	$$ 
	It is enough to take 
	$$O:=\{s \in S: \ d(s(a),s_0(a)) < \eps \ \ \forall a \in A\}.$$ 
	Indeed, take into account that every $f \in S=\Theta(M,d)$ is a $1$-Lipschitz map. Hence, $d(f(s(a),f(s_0(a)) \leq d(s(a),s_0(a)) < \eps.$
	
	Now we check that the action is saturated. Let $s_0 \in S$. 
	For every given finite $A \subset M$ its image $s_0A$ is also a finite subset of $M$. Then $(f_1s_0,f_2s_0) \in \eps_A$ for every 
	$(f_1,f_2) \in \eps_{s_0A}$. 	 
\end{proof}

\sk 
As we already mentioned in Section \ref{s:compactifiability} the topological monoid $(D^D, \circ, \tau_p)$ is right compactifiable for every \textit{discrete} set $D$.  
The following result says more. 

\begin{cor} \label{c:D^D}
	For every discrete set $D$ the monoid $(D^D, \circ, \tau_p)$ 
	is r-NA. 
\end{cor}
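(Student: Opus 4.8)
The plan is to reduce the statement directly to Proposition \ref{t:Theta-ultrametric}. First I would invoke Example \ref{ex:D^D}, which identifies the topological monoid $(D^D, \circ, \tau_p)$ with $\Theta(D, d_\Delta)$, where $d_\Delta$ is the $\{0,1\}$-valued metric on $D$ defined by $d_\Delta(x,y) = 1$ for every pair of distinct points $x,y \in D$; under this identification the pointwise topology $\tau_p$ on $D^D$ corresponds to the pointwise topology on $\Theta(D,d_\Delta)$. Note that every self-map $f \colon D \to D$ is automatically $1$-Lipschitz for $d_\Delta$: if $x = y$ then $d_\Delta(f(x),f(y)) = 0 = d_\Delta(x,y)$, and if $x \ne y$ then $d_\Delta(f(x),f(y)) \le 1 = d_\Delta(x,y)$, so indeed $D^D = \Theta(D,d_\Delta)$ as sets, and by Example \ref{ex:D^D} as topological monoids.

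Next I would check that $d_\Delta$ is an ultra-metric, i.e.\ that it satisfies the strong triangle inequality $d_\Delta(x,z) \le \max\{d_\Delta(x,y), d_\Delta(y,z)\}$. This is immediate: if $x = z$ the left-hand side is $0$; if $x \ne z$ then $x,y,z$ cannot all coincide, so at least one of $d_\Delta(x,y)$, $d_\Delta(y,z)$ equals $1$, whence the right-hand side is $1 \ge d_\Delta(x,z)$.

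Therefore Proposition \ref{t:Theta-ultrametric} applies with $M = D$ and $d = d_\Delta$, and it yields that the topological monoid $\Theta(D, d_\Delta)$ is r-NA. In view of the identification $\Theta(D,d_\Delta) = (D^D,\circ,\tau_p)$ above, this is precisely the assertion that $(D^D,\circ,\tau_p)$ is r-NA, completing the proof.

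There is essentially no obstacle here: the result is a direct corollary, and the only points requiring (trivial) verification are that $d_\Delta$ is genuinely an ultra-metric and that the identification in Example \ref{ex:D^D} is an isomorphism of topological monoids, which is already recorded there.
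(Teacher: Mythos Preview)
Your proof is correct and follows exactly the paper's approach: invoke Example \ref{ex:D^D} to identify $(D^D,\circ,\tau_p)$ with $\Theta(D,d_\Delta)$ and then apply Proposition \ref{t:Theta-ultrametric}. The paper's version is terser (it omits the routine checks that $d_\Delta$ is an ultra-metric and that every self-map of $D$ is $1$-Lipschitz), but the argument is the same.
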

\begin{proof} $(D^D, \circ, \tau_p)$ is a topological submonoid of $\Theta(D,d_{\Delta})$ by Example \ref{ex:D^D}. So, we can apply 
	Theorem \ref{t:Theta-ultrametric}. 
\end{proof}

\begin{cor} \label{c:Emb} 
	For every metric space $(M,d)$ the monoid $\rm{Emb}(M,d)$ of all 
	isometric embeddings $M \hookrightarrow M$ with respect to the pointwise topology is right-compactifiable. If in addition, $d$ is a ultra-metric, then $\rm{Emb}(M,d)$ is r-NA.  
	In particular, the monoid $\rm{Inj}(D)$ of all injections $D \hookrightarrow D$ is r-NA monoid for every discrete set $D$. 
\end{cor}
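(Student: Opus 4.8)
The plan is to realise $Emb(M,d)$ as a topological submonoid of $\Theta(M,d)$ and then simply quote the results already proved for $\Theta(M,d)$. First I would observe that every isometric embedding $f\colon M\hookrightarrow M$ satisfies $d(f(x),f(y))=d(x,y)\le d(x,y)$, hence $f\in\Theta(M,d)$; moreover the composition of two isometric embeddings is again an isometric embedding and $\mathrm{id}_M\in Emb(M,d)$, so $Emb(M,d)$ is a submonoid of the monoid $\Theta(M,d)$. Since the pointwise topology on $Emb(M,d)$ is exactly the subspace topology inherited from $(\Theta(M,d),\tau_p)$, it is a \emph{topological} submonoid.

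For right compactifiability: $\Theta(M,d)$ is right compactifiable by Corollary \ref{c:rightVersEx}(1), and this property is hereditary to topological submonoids by the criterion of Fact \ref{t:sem} (condition ``$S^{op}$ embeds into $\Theta(M,d)$'' obviously passes to submonoids, applied here to the opposite monoids). Hence $Emb(M,d)$ is right compactifiable. If in addition $d$ is an ultra-metric, then $\Theta(M,d)$ is r-NA by Proposition \ref{t:Theta-ultrametric}, and since any submonoid of an r-NA monoid is r-NA (the remark following Definition \ref{d:NA-action}), we conclude that $Emb(M,d)$ is r-NA.

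Finally, for the assertion about $\mathrm{Inj}(D)$ I would specialise to $M=D$ equipped with the discrete ultra-metric $d_\Delta$ of Example \ref{ex:D^D}, where $d_\Delta(x,y)=1$ for distinct $x,y$: a selfmap of $D$ preserves $d_\Delta$ precisely when it is injective, so $Emb(D,d_\Delta)=\mathrm{Inj}(D)$ as monoids, and the pointwise topology on $\mathrm{Inj}(D)$ is the topology inherited from $(D^D,\tau_p)$. Since $d_\Delta$ is an ultra-metric, the previous paragraph yields that $\mathrm{Inj}(D)$ is r-NA. (Alternatively, $\mathrm{Inj}(D)$ is a submonoid of the r-NA monoid $(D^D,\circ,\tau_p)$ of Corollary \ref{c:D^D}.)

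I do not expect any genuine obstacle here: the whole argument reduces to the elementary remark that $Emb(M,d)\subset\Theta(M,d)$ is a topological submonoid and to the identification $Emb(D,d_\Delta)=\mathrm{Inj}(D)$; all the substance is carried by Corollary \ref{c:rightVersEx} and Proposition \ref{t:Theta-ultrametric}. Should one prefer a self-contained argument, the proof of Proposition \ref{t:Theta-ultrametric} can be rerun verbatim with $\Theta(M,d)$ replaced by $Emb(M,d)$, since every step there used only that the elements are $1$-Lipschitz, which still holds for isometric embeddings.
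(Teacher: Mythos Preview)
Your proposal is correct and follows essentially the same approach as the paper: observe that $Emb(M,d)$ is a topological submonoid of $\Theta(M,d)$, then invoke Fact~\ref{t:sem} (via Corollary~\ref{c:rightVersEx}) for right compactifiability and Proposition~\ref{t:Theta-ultrametric} for the r-NA conclusion. The paper's proof is more terse but identical in substance; your added explanation of the $\mathrm{Inj}(D)$ case via $d_\Delta$ is exactly the intended specialisation.
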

\begin{proof} 
	$\rm{Emb}(M,d)$ is a natural topological submonoid of $\Theta(M,d)$. Now apply
	 Proposition \ref{t:Theta-ultrametric}. 
\end{proof}

\sk 
\begin{prop} \label{t:UNIF} 
	For every NA uniform space $(Y,\U)$ 
	the topological monoid $\rm{Unif}(Y,Y)$ (from Example \ref{l:Unif})  is l-NA. 
\end{prop}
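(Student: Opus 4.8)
The plan is to produce a topologically compatible non-archimedean uniformity on $S:=\rm{Unif}(Y,Y)$ for which the left action of $S$ on itself is equiuniform, and then quote the implication $(2)\Rightarrow(1)$ of Proposition \ref{t:crit}. The natural candidate is the subspace uniformity $\mathcal W:=\U_{\sup}|_S$ inherited from the uniformity of uniform convergence $\U_{\sup}$ on $Y^Y$; by Lemma \ref{l:Unif} this is exactly the uniformity defining the topological monoid structure of $\rm{Unif}(Y,Y)$, so it is topologically compatible, and it is Hausdorff because $\U$ is.

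First I would check that $\mathcal W$ is NA. Since $\dim\U=0$, the uniformity $\U$ has a base $\gamma$ consisting of equivalence relations on $Y$. For an equivalence relation $\eps$ on $Y$, the basic entourage $\widetilde\eps=\{(f_1,f_2)\in Y^Y:(f_1(x),f_2(x))\in\eps\ \forall x\in Y\}$ is again an equivalence relation, since reflexivity, symmetry and transitivity are all verified pointwise. Hence $\{\widetilde\eps:\eps\in\gamma\}$ is a base of $\U_{\sup}$ made of equivalence relations, and intersecting with $S\times S$ preserves this, so $\dim\mathcal W=0$.

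Next I would verify that the left action $S\times S\to S$ is $\mathcal W$-equiuniform. Boundedness is for free: $\mathcal W$ is right invariant by Remark \ref{r:unif-eq}.2, and a right invariant uniformity makes the left self-action bounded by Remark \ref{r:unif-eq}.3. For saturation I would use that each $s\in S$ is a uniformly continuous selfmap of $Y$: given $\eps\in\gamma$, pick $\delta\in\gamma$ with $(s(a),s(b))\in\eps$ whenever $(a,b)\in\delta$; then $(f_1,f_2)\in\widetilde\delta|_S$ forces $(sf_1(x),sf_2(x))\in\eps$ for every $x$, i.e. $\widetilde\delta|_S\subseteq s^{-1}(\widetilde\eps|_S)$, so $s^{-1}(\widetilde\eps|_S)\in\mathcal W$ and the $s$-translation of $S$ is $\mathcal W$-uniform. (Alternatively, starting only from $\mathcal W$-boundedness one can invoke Proposition \ref{l:noteasy}: the saturated refinement $\mathcal W_S$ is still NA, still topologically compatible since the translations of $S$ are continuous, and the action becomes $\mathcal W_S$-equiuniform.)

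Combining the two points, $\mathcal W$ is a topologically compatible NA uniformity on $S$ for which the left self-action is equiuniform, so by Proposition \ref{t:crit} this action is NA and $S$ is l-NA. (Concretely, Fact \ref{t:samuel} then gives that the Samuel compactification $u(S,\mathcal W)$ is a proper $S$-compactification, which is a Stone space by Lemma \ref{l:0}.) No step looks genuinely hard; the only things to be careful about are the left/right bookkeeping — one must use right invariance of $\U_{\sup}|_S$ precisely to bound the \emph{left} action — and the trivial but essential observation that $\widetilde\eps$ inherits being an equivalence relation.
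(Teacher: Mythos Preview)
Your proof is correct and follows essentially the same approach as the paper: both show that $\U_{\sup}|_S$ is an NA compatible uniformity and that the left self-action of $S=\rm{Unif}(Y,Y)$ is $\U_{\sup}$-equiuniform, then invoke Proposition~\ref{t:crit}. The only cosmetic difference is that you obtain boundedness via Remarks~\ref{r:unif-eq}.2--3 (right invariance of $\U_{\sup}|_S$) whereas the paper verifies it directly from the definition; the saturation argument is identical.
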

\begin{proof} 
	 We use Proposition \ref{t:crit} for the 
	left action $S \times X \to X$, where $X:=S=\rm{Unif}(Y,Y)$. 
If $(Y,\U)$ is NA, then also $(C(X,Y),\U_{sup})$ 
(which is defined in Subsection \ref{d:unif-conv}) 
is NA (because if $\eps \in \U$ is an equivalence relation on $Y$ then $\widetilde{\eps}$ is an equivalence relation on $C(X,Y)$).  
In particular, the uniformity $\U_{sup}$ is NA on the topological monoid $\rm{Unif}(Y,Y)$.

	Now we show that the left action of $S:=\rm{Unif}(Y,Y)$ on itself is $\U_{sup}$-equiuniform. Let $s_0 \in S$ and $\widetilde{\eps} \in \U_{sup}$. Choose the neighborhood $\widetilde{\eps}(s_0):=\{s\in S: (s,s_0) \in \widetilde{\eps}\}$. Then $(sf,s_0f) \in \widetilde{\eps}$ for every $f \in S$
	 and $s\in \widetilde{\eps}(s_0).$ Since every $s \in S$ is a uniform map $(Y,\U) \to (Y,\U)$, it is straightforward to see that the left action $S \times (S,\U_{sup}) \to  (S,\U_{sup})$ is saturated. 
\end{proof}

\begin{cor} \label{c:C(K,K)} 
	Let $K$ be a Stone space. Then the topological monoid $C(K,K)$ is l-NA in its uniform topology.
\end{cor}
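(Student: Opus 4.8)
The plan is to recognize $C(K,K)$ as a concrete instance of the monoid $\rm{Unif}(Y,Y)$ attached to an NA uniform space, and then to invoke Proposition \ref{t:UNIF} directly. First I would observe that since $K$ is compact it carries a unique compatible uniformity $\U$, and that every continuous selfmap of a compact uniform space is automatically uniformly continuous; hence, as monoids, $C(K,K) = \rm{Unif}(K,K)$. Moreover, $K$ being compact, the uniformity of uniform convergence coincides with the uniformity of compact convergence, so the ``uniform topology'' $top(\U_{sup})$ on $C(K,K)$ is precisely the compact-open topology of Fact \ref{l:co}; in particular it is the topology in which we want to prove the l-NA property.

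Next I would verify that $\U$ is NA, that is, $\dim \U = 0$. Since $K$ is a Stone space, every finite open cover of $K$ admits a refinement which is a finite partition of $K$ into clopen sets (this is the characterization of $\dim K = 0$ recalled in Section 2). For such a clopen partition $\{A_1,\dots,A_n\}$ of $K$ the entourage $\bigcup_{i=1}^{n} A_i \times A_i$ is an open equivalence relation on $K$, and such entourages form a base of the unique uniformity $\U$ of the compact space $K$. Hence $\U$ has a base consisting of equivalence relations, i.e. $\dim \U = 0$, so $(K,\U)$ is an NA uniform space.

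Finally, applying Proposition \ref{t:UNIF} to the NA uniform space $(K,\U)$ yields that $\rm{Unif}(K,K)$, equipped with the topology inherited from $top(\U_{sup})$, is an l-NA topological monoid. Combining this with the identification $C(K,K) = \rm{Unif}(K,K)$ and the fact (from the first paragraph) that $top(\U_{sup})$ restricted to this monoid is exactly the uniform (equivalently, compact-open) topology of $C(K,K)$, we conclude that $C(K,K)$ is l-NA in its uniform topology. I do not expect any genuine obstacle: the statement is really a corollary, and the only points requiring a word of justification are the two standard facts about compact zero-dimensional spaces used above, namely $C(K,K) = \rm{Unif}(K,K)$ and $\dim \U = 0$.
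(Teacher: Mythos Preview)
Your proposal is correct and is exactly the intended argument: the paper presents this as an immediate corollary of Proposition \ref{t:UNIF}, obtained by noting that for a compact Stone space $K$ one has $C(K,K)=\rm{Unif}(K,K)$ and that the unique compatible uniformity on $K$ is NA. Your added justifications of these two standard facts are accurate and fill in precisely what the paper leaves implicit.
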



\sk 
\subsection{Weil completions of NA groups}  

It is well known that the right and left completions $(\widehat{G}^r, \widehat{\mathcal{U}_R)}$ and $(\widehat{G}^l, \widehat{\mathcal{U}_L)}$ of a topological group $G$ are 
naturally defined (opposite to each other) topological monoids (see for example \cite[Proposition
10.12(a)]{RD}) containing $G$ as a (dense) submonoid.  
These natural monoids are not groups in general. Probably, the first who discovered this fact was J. Dieudonne \cite{Dieudonne}.

\begin{f}  \label{f:completions} \cite{Me-cs07} 
	Let $G$ be a topological group. The monoid $\widehat{G}^r$ is l-compactifiable for every topological group $G$. Similarly, the monoid $\widehat{G}^l$ is r-compactifiable.  
\end{f}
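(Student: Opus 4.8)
The plan is to prove this statement by reducing it to the already-established compactifiability criterion for monoids, namely Fact~\ref{t:sem}, and exploiting the structure of the Weil completions. Recall that the right completion $\widehat{G}^r$ carries the completed right uniformity $\widehat{\mathcal{U}_R}$, and that the right uniformity $\mathcal{U}_R$ on $G$ itself is right invariant (Remarks~\ref{r:unif-eq}.1). By Remarks~\ref{r:unif-eq}.3, right invariance of $\mathcal{U}_R$ means the left action of $G$ on itself is $\mathcal{U}_R$-bounded; since every left translation of a topological group is a uniform isomorphism for the right uniformity, the left action is in fact $\mathcal{U}_R$-equiuniform. The first step is therefore to observe that this equiuniformity passes to the completion: by Lemma~\ref{e:easy}.6, the induced left action of $G$ on $(\widehat{G}^r, \widehat{\mathcal{U}_R})$ is $\widehat{\mathcal{U}_R}$-equiuniform, and hence (being uniformly continuous in the first variable on a dense subset, with the target complete) extends to an equiuniform action of the monoid $S:=\widehat{G}^r$ on itself.

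The second step is to identify this extended action with the \emph{left} multiplication action of the monoid $S=\widehat{G}^r$ on itself. Here one must be slightly careful about which multiplication the completion carries: the standard fact (see \cite[Proposition 10.12(a)]{RD}) is that $\widehat{\mathcal{U}_R}$-Cauchy filters can be multiplied so that left multiplication by a fixed element of $G$ extends continuously, giving $\widehat{G}^r$ its monoid structure; right multiplication by elements of $S$ need not be continuous, which is exactly why $\widehat{G}^r$ fails to be a group. The point is that the left-translation action $S\times S\to S$, $(s,x)\mapsto sx$, is precisely the continuous extension of the left action of $G$ on $(\widehat{G}^r,\widehat{\mathcal{U}_R})$ obtained in the first step, so it is $\widehat{\mathcal{U}_R}$-equiuniform. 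Then Fact~\ref{t:samuel} applies: the Samuel compactification $u(\widehat{G}^r, \widehat{\mathcal{U}_R})$ is a proper $S$-compactification of the left action of $S$ on itself, which by Definition~\ref{d:comp} means $\widehat{G}^r$ is l-compactifiable.

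The third step is the dual statement for $\widehat{G}^l$. Since $\widehat{G}^l$ is the opposite monoid of $\widehat{G}^r$ (as recalled just before Fact~\ref{f:completions}), and since $S$ is l-compactifiable if and only if $S^{op}$ is r-compactifiable (the remark following Definition~\ref{d:comp}), the r-compactifiability of $\widehat{G}^l$ follows immediately from the l-compactifiability of $\widehat{G}^r$ applied to the group $G$. Alternatively one runs the symmetric argument starting from the left uniformity $\mathcal{U}_L$, which is left invariant, making the right action of $G$ on itself $\mathcal{U}_L$-equiuniform.

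The main obstacle, and the step that deserves the most care in writing, is the second one: verifying that the monoid multiplication on $\widehat{G}^r$ really does make the left-translation map the continuous extension of the left action on the completed \emph{right}-uniform space, and in particular that $\widehat{\mathcal{U}_R}$ is a compatible (topologically correct) uniformity making left translations uniform. This is essentially bookkeeping with the conventions for group completions — one has to match "right uniformity / left completion" conventions consistently — but it is where sign errors in the literature typically hide. Everything else is a direct citation of Lemma~\ref{e:easy}, Fact~\ref{t:samuel}, and Remarks~\ref{r:unif-eq}. (Note that this is exactly the pattern later refined in Proposition~\ref{p:r-compl}, where the NA hypothesis on $G$ additionally forces $\widehat{\mathcal{U}_R}$ to be zero-dimensional and hence upgrades l-compactifiable to l-NA.)
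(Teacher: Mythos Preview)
The paper does not give an explicit proof of this fact; it is cited from \cite{Me-cs07} and also listed as Corollary~\ref{c:mon}.3, i.e., as an immediate consequence of Fact~\ref{t:sem}(5): one embeds $S=\widehat{G}^r$ into $\rm{Unif}(Y,Y)$ for $Y=(\widehat{G}^r,\widehat{\mathcal{U}_R})$. Your approach is essentially the same route phrased in the equivalent language of equiuniformity (Definition~\ref{d:equiun}.3 makes the two formulations interchangeable), and you have correctly isolated the only nontrivial point --- that left translations by elements of $\widehat{G}^r$ (not just of $G$) are $\widehat{\mathcal{U}_R}$-uniform, which is what ``naturally embedded into $\rm{Unif}(Y,Y)$'' in the proof of Proposition~\ref{p:r-compl} also tacitly uses.
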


\begin{remark}
	In particular, 
	if $G$ is abelian then its completion $\widehat{G}$ is an (abelian) NA topological group. In fact, the following more general result is true. For every topological group $G$ its \textit{Raikov completion} (completion with respect to the two-sided uniformity) $\widehat{G}$ is an NA topological group. Indeed, if $G$ is NA then by Fact \ref{t:condit} (assertion 4) $G$ is a topological subgroup of $\Homeo(X)$ for some Stone space $X$. 
	Recall that $\Homeo(X)$ is Raikov complete for every compact $X$ (see, for example, \cite{Bourb1}).  
	Then the closure $cl(G)$ of $G$ in $\Homeo(X)$ can be identified with the completion $\widehat{G}$. 
\end{remark}

\begin{prop} \label{p:r-compl}  
		For every topological group $G$ the following conditions are equivalent:
		\begin{enumerate}
			\item $G$ is an NA group; 
			\item  $\widehat{G}^r$ is an l-NA monoid; 
			\item $\widehat{G}^l$ is an r-NA monoid.
		\end{enumerate} 
\end{prop}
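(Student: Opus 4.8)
The plan is to dispatch $(2)\Leftrightarrow(3)$ as a formality and then prove $(1)\Rightarrow(2)\Rightarrow(1)$. For $(2)\Leftrightarrow(3)$, recall that the left and right Weil completions of a topological group are opposite topological monoids, $\widehat{G}^l=(\widehat{G}^r)^{op}$ (see the discussion preceding Fact \ref{f:completions}); combined with Remark \ref{r:oppos}(3), which says that $S$ is l-NA if and only if $S^{op}$ is r-NA, this yields the equivalence at once.

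For $(1)\Rightarrow(2)$, assume $G$ is an NA group. By Fact \ref{t:condit} there is a topological group embedding $G\hookrightarrow \Homeo(X)$ for some Stone space $X$. Since $X$ is compact, $\Homeo(X)$ is a topological subgroup of the topological monoid $C(X,X)$ in the compact-open topology, which equals the topology $top(\U_{\sup})$ of uniform convergence (Fact \ref{l:co}); in particular $G$ is a topological submonoid of $C(X,X)=\mathrm{Unif}(X,X)$. I would then observe that the uniformity induced on $G$ from $(C(X,X),\U_{\sup})$ is right invariant (Remarks \ref{r:unif-eq}(2)) and topologically compatible, hence coincides with the right uniformity $\U_R$ of $G$ by its uniqueness (Remarks \ref{r:unif-eq}(1)). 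As $X$ is compact, $C(X,X)$ is complete for $\U_{\sup}$, so $\widehat{G}^r=\widehat{(G,\U_R)}$ can be identified with the closure of $G$ in $C(X,X)$; being the closure of a submonoid, it is itself a topological submonoid of $C(X,X)$. Since $C(X,X)$ is l-NA when $X$ is a Stone space (Corollary \ref{c:C(K,K)}, equivalently Theorem \ref{t:conditSEM}) and submonoids of l-NA monoids are l-NA, we conclude that $\widehat{G}^r$ is l-NA.

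For $(2)\Rightarrow(1)$, the point is simply that $G$ is a submonoid of $\widehat{G}^r$; hence if $\widehat{G}^r$ is l-NA then so is $G$, i.e., the left action of the group $G$ on itself admits a proper $G$-compactification into a Stone space. This is precisely condition (2) of Fact \ref{t:condit}, so $G$ is an NA group.

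The only delicate step is the identification of $\widehat{G}^r$ with the closure of $G$ inside $(C(X,X),\U_{\sup})$, which rests on the uniqueness of the right-invariant compatible uniformity on $G$ and on the completeness of $C(X,X)$ for compact $X$. Alternatively, $(1)\Rightarrow(2)$ can be proved without invoking completeness: the left action of any topological group on $(G,\U_R)$ is $\U_R$-equiuniform — bounded because $\U_R$ is right invariant, saturated because conjugations are homeomorphisms of $G$ fixing $e$ — and $\U_R$ is NA when $G$ is NA, so Lemma \ref{e:easy}(6) yields an equiuniform $G$-action on $\widehat{G}^r$ with respect to the NA uniformity $\widehat{\U_R}$; a density argument then promotes this to the full monoid action on $\widehat{G}^r$, and Proposition \ref{t:crit} concludes. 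I expect the embedding argument to be shorter to write up in full.
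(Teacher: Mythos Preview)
Your proof is correct. The steps $(2)\Leftrightarrow(3)$ and $(2)\Rightarrow(1)$ are exactly as in the paper. For $(1)\Rightarrow(2)$ you take a closely related but distinct route: you realize $\widehat{G}^r$ \emph{externally} as the closure of $G$ inside $C(X,X)$ for a Stone space $X$ coming from Fact~\ref{t:condit}, and then quote Corollary~\ref{c:C(K,K)}. The paper instead works \emph{internally}: it observes that $(G,\U_R)$ is an NA uniform space, hence so is its completion $Y:=(\widehat{G}^r,\widehat{\U_R})$, and then uses the left-translation embedding $\widehat{G}^r\hookrightarrow \rm{Unif}(Y,Y)$ together with Proposition~\ref{t:UNIF}. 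The paper's version avoids the two auxiliary facts you flagged as ``delicate'' (completeness of $C(X,X)$ and the uniqueness of the right-invariant uniformity), while your version has the virtue of making the identification of $\widehat{G}^r$ with a concrete submonoid completely explicit. Your alternative sketch via Lemma~\ref{e:easy}(6) is in fact closer to the paper's argument than your main one, though the paper bypasses the ``density promotion'' step by taking the monoid action of $\widehat{G}^r$ on itself directly.
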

\begin{proof} 
	(1) $\Rightarrow$ (2) The completion of NA uniform space is again NA (see \cite[Ch. V]{Isb}). Hence, 
	$Y:=(\widehat{G}^r, \widehat{\mathcal{U}_R)}$ is NA. Now observe that 
	$\widehat{G}^r$ is naturally embedded into $\rm{Unif}(Y,Y)$ and apply Proposition \ref{t:UNIF}.  
	 
	 (2) $\Rightarrow$ (1) 
	  $S:=\widehat{G}^r$ is an l-NA monoid means that there exists a proper $S$-compactification $\nu\colon S \hookrightarrow Y$ of the left action of $S$ on itself. Since $G$ is embedded into $S$, in particular, we obtain a proper $G$-compactification of of the left action of $G$ on itself. 
	  By Fact \ref{t:condit} (assertion 3) we conclude that $G$ is NA.  
	
	
	The equivalence (2) $\Leftrightarrow$ (3) can be proved   using  the formula  $(\widehat{G}^l)^{op}=\widehat{G}^r$.
\end{proof}

\begin{remark} \label{r:Pest} 
	There are concrete descriptions of $\widehat{G}^r$ and $\widehat{G}^l$ for several remarkable groups $G$. For instance, 
	\begin{enumerate}
		\item (J. Dieudonne \cite{Dieudonne}) For the symmetric group $G:=S_{\N}$ its left completion $\widehat{G}^l$ can be identified with the topological monoid $\rm{Inj}(\N)$ of all embeddings (injective maps) $\N \hookrightarrow \N$.  
		\item (Pestov \cite[Prop. 8.2.6]{Pe-nbook}) If $(M,d)$ is a complete metric space and $G:=\Iso(M,d)$ then $\widehat{G}^l$ is a natural topological submonoid of $\rm{Emb}(M,d)$ of all  isometric embeddings $M \hookrightarrow M$. If, in addition, $(M,d)$ is ultra-homogeneous 
		(e.g., if $M$ is 
		a Urysohn space), 
		then $\widehat{G}^l=\rm{Emb}(M,d)$.  
		It is a far reaching generalization of (1). 
		
		Note that the (non-archimedean) topological monoids $\widehat{G}^l$ (for NA groups $G$) and $\rm{Emb}(M,d)$ are important objects in K-P-T theory \cite{KPT}; namely, 
		they provide a useful tool for understanding the oscillation stability. We refer to  \cite{Pe-nbook} and \cite{KPT} for 
		more details. 
		
			\item (Pestov \cite[Prop. 8.2.6]{Pe-nbook}) The left completion of $G=\rm{Aut}(\Q,\leq)$ is the monoid of all order-preserving injections $\Q \hookrightarrow \Q$ with the pointwise topology for discrete $\Q$.  
	\end{enumerate} 
\end{remark}

\sk        
\section{Using Stone and Pontryagin dualities} 
\label{s:dualities} 

By Stone's celebrated representation theorem, there is a 
duality between Boolean algebras (or, Boolean rings) and zero-dimensional compact spaces (Stone spaces). More precisely, for every Stone space $Y$ we have the Boolean algebra ($clop(Y), \cup, \cap)$ of all clopen subsets. 
Conversely, for every Boolean algebra $B$ the set of all ultra-filters under a naturally defined topology is a Stone space. Moreover, 
one may 
retrieve the original structure by applying each of these constructions. 

\begin{remark} \label{r:rings} 
	It is also well known that one may equally consider the Boolean rings 
	associated with the Boolean algebras. In this case the original Stone space can be reconstructed as the set of all ring homomorphisms $B \to \Z_2$. 
	We refer to \cite{GH-Stone} for more details about this classical theory. 
\end{remark}

Let $Y$ be a Stone space and 
$B=(B(Y), \triangle, \cap)$ be 
the discrete Boolean ring of all clopen
subsets in $Y$, where 
 symmetric difference and intersection   serve as 
 the addition and multiplication, respectively.
As usual, one may identify $B$ with the Boolean ring  $B:=C(Y,\mathbb{Z}_2)$ of all
continuous functions $\chi \colon Y\rightarrow \mathbb{Z}_2.$
By 
the 
standard compactness arguments, it is clear that $|B|=w(Y)$. 

Denote by $B^{\ast}:=\rm{Hom}(B,\T)$ the Pontryagin dual of $B.$
Since $B$ is a Boolean group (that is, $\chi = -\chi$ for every $\chi \in B$), 
every character $B \to \T$ can be identified with a group homomorphism into the unique
2-element subgroup $\Omega_2=\{1, -1\}$, a copy of $\Z_2$. The same
is true for the characters on $B^*$, hence the natural evaluation
map 
$w \colon B \times B^*  \to  \T$, $w(\chi,f)=f(\chi)$ can be restricted
naturally to $B \times B^* \to \Z_2$. Under this identification
$B^{\ast}:=\rm{Hom}(B,\Z_2)$ is a closed (hence compact) subgroup of the
compact group $\mathbb{Z}_2^{B}.$
In particular, $B^*$ is a Boolean profinite group.

Clearly, the groups $B$ and $\mathbb Z_2$, being discrete, are
non-archimedean. The group $ B^{\ast}=\rm{Hom}(B,\mathbb{Z}_2)$ is also
non-archimedean since it is a subgroup of $\mathbb{Z}_2^{B}.$

\sk 

Let  $\pi \colon S \times Y \to Y$ be an action of a monoid
 $S$ on 
a Stone space $Y$, where at least the translations $\pi_s \colon Y \to Y, y \mapsto sy$ are continuous. It is equivalent to say that the corresponding homomorphism $h \colon S \to C(Y,Y)$ is well defined.   
The functoriality of the Stone and Pontryagin dualities induce
the actions of $S$ on $B$ and $B^*$. 
More precisely, we have the right action 
$$\alpha \colon  B \times S  \rightarrow B, \ \ (\chi s)(x):=\chi(sx)$$
and the left action 
$$\beta: S \times B^{\ast} \rightarrow B^{\ast},\ \ (sf)(\chi):=f(\chi s).$$
Every translation under these actions is a continuous group
endomorphism. 
Moreover, every translation $\a_s \colon B \to B$ ($s \in S$) is even a ring endomorphism. 
Therefore we have the associated monoid anti-homomorphism:
$$i_{\alpha}: S \to \rm{End}_{R}(B)$$
and the monoid homomorphism 
$$i_{\beta}: S \to \rm{End}(B^*),$$
where $\rm{End}_{R}(B)$ 
and  $\rm{End}(B^*)$  are defined as in Remark \ref{r:monoids}.   
Note that $\rm{End}_{R}(B)$ is a topological submonoid of $B^B$ ($B$ is discrete) and $\rm{End}(B^*)$ is a topological submonoid of $C(B^*,B^*)$ in the uniform topology ($B^*$ is compact). 
If $\pi$ is continuous, then one may
show 
that the actions $\a$ and $\beta$ are also jointly continuous (and then the corresponding anti-homomorphism $i_{\a}$ and homomorphism $i_{\beta}$ are continuous). This follows, in particular, from Theorem \ref{p:anti-iso}. 

The pair $(\alpha, \beta)$ is a birepresentation of $S$ on $w \colon B
\times B^* \to \Z_2.$ Meaning that, 
$$\forall s \in S \ \ \  w(\chi s,f)=w(\chi, sf)=f( \chi s).$$

 Define the following \textit{adjoint map} (induced by the Pontryagin duality)
	$$
\Psi \colon \rm{End}(B) \to \rm{End}(B^*), \ \  \mu \mapsto \mu^* \ \ 
\mu^*(f):=f \circ \mu \ \ \forall \mu \in End(B)  \ \ \forall f \in B^* 
$$

which is an anti-isomorphism of monoids by the Pontryagin duality properties. 

\begin{remark} \label{r:delta}  
	It is straightforward to verify that 
	the natural evaluation 
	map
	$$\delta: Y \to B^*, \ y \mapsto \delta_y, \ \ \ \delta_y(\chi)=\chi(y)$$
	is a topological $S$-embedding. In these terms, $\delta(Y) \subset B^*$ is just the subset of 
	all ring homomorphisms $\rm{Hom}_{R}(B,\Z_2)$ in the set of all group homomorphisms 
	$B^*:=\rm{Hom}(B,\Z_2)$ (see \cite[Theorem 32]{GH-Stone}). 
\end{remark}

\sk  
\begin{remark} \label{r:Str} 
	In \cite{MS1} we explore the following known fact
	(see \cite[Theorem 26.9]{HR}) that 
	for every locally compact abelian group $G$ and its Pontryagin dual $G^*,$   
	the canonically defined adjoint map
	between
	$\rm{Aut}(G)$ and $\rm{Aut}(G^*)$  is an  anti-isomorphism of topological  groups (where these automorphism groups equipped with the Birkhoff topology).  
	This is true also for the 
	topological rings of group endomorphisms 
	$\rm{End}(G)$ and $\rm{End}(G^*)$ under the compact-open topology 
	(see \cite[Corollary 25.2]{Str}).
	
\end{remark}

\sk 
An invertible function $f \colon X_1 \to X_2$ between two uniform spaces is said to be a \textit{uniformism} if 
it is a uniform isomorphism meaning that 
both $f$ and $f^{-1}$  are uniform functions.  


\begin{thm} \label{p:anti-iso} 
	Let $Y$ be a Stone space and $B:=C(Y,\mathbb{Z}_2)$ be its (discrete) Boolean ring. Then the canonical monoid anti-isomorphisms  
		\begin{equation} \label{eq:Y} 
		\Phi \colon C(Y,Y) \to \rm{End}_{R}(B), \ \  s \mapsto s^* \ \ 
		s^* (\chi):=\chi \circ s \ \ \forall \chi \in B
	\end{equation} 
		\begin{equation} \label{eq:B*}  
\Delta \colon \rm{End}(B) \to \rm{End}(B^*), \ \  \s \mapsto \s^* \ \ 
\s^*(f):=f \circ \s \ \  \forall f \in B^* 
\end{equation} 
	are uniformisms, where $\rm{End}(B)$ and its submonoid $\rm{End}_{R}(B)$ carry the pointwise uniformity, while $C(Y,Y)$ and $\rm{End}(B^*)$ carry the  uniformity of uniform convergence. 
\end{thm}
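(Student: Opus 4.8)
The plan is to verify each of the two displayed maps is (i) a well-defined bijection of monoids, (ii) an anti-homomorphism, and (iii) a uniformism, and to note that $\Phi$ and $\Delta$ are essentially the same kind of statement specialized to two different dualities (Stone and Pontryagin). For $\Phi$, the underlying bijection $C(Y,Y)\leftrightarrow \mathrm{End}_R(B)$ is exactly the Stone duality contravariant functor applied to morphisms: a continuous $s\colon Y\to Y$ dualizes to the ring endomorphism $s^*\colon \chi\mapsto \chi\circ s$ of $B=C(Y,\mathbb Z_2)$, and conversely every ring endomorphism of $B$ arises this way because Stone duality is an equivalence of categories; the anti-homomorphism identity $(s\circ t)^* = t^*\circ s^*$ is the standard contravariance $(\chi\circ s)\circ t = \chi\circ(s\circ t)$ read correctly. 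For $\Delta$, the underlying bijection $\mathrm{End}(B)\leftrightarrow\mathrm{End}(B^*)$ is the adjoint (transpose) construction from Pontryagin duality already recalled in the excerpt: it is an anti-isomorphism of monoids by the duality, and I can cite the stated facts (\cite[Theorem 26.9]{HR}, \cite[Corollary 25.2]{Str}, and the adjoint map $\Psi$ discussed just before the theorem). So the only genuinely new content is the uniform part: matching the pointwise uniformity on the endomorphism side with the uniformity of uniform convergence on the $C(Y,Y)$ side (for $\Phi$) and with the pointwise uniformity on $\mathrm{End}(B^*)$ (for $\Delta$).

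The main step, then, is the uniform-convergence bookkeeping for $\Phi$. I would work directly from the explicit bases. On $\mathrm{End}_R(B)\subset B^B$ the pointwise uniformity has as a basic entourage, for a finite $F=\{\chi_1,\dots,\chi_n\}\subset B$, the equivalence relation
$$
E_F=\{(\mu_1,\mu_2): \mu_1(\chi_i)=\mu_2(\chi_i)\ \forall i\}.
$$
On $C(Y,Y)$ the uniformity of uniform convergence $\mathcal V_{\sup}$ (which equals $\mathcal V_c$ since $Y$ is compact) has, for a clopen partition $\mathcal P=\{P_1,\dots,P_m\}$ of $Y$ (these generate the unique uniformity of the Stone space $Y$ — Lemma \ref{l:0} / the profiniteness of $Y$), the basic entourage
$$
\widetilde{\eps_{\mathcal P}}=\{(s_1,s_2): s_1(y),s_2(y)\ \text{lie in the same}\ P_j\ \forall y\in Y\}.
$$
The key observation is the dictionary: a clopen subset $C\subseteq Y$ is an element $\chi_C\in B$, and for $s_1,s_2\in C(Y,Y)$ we have $s_1^*(\chi_C)=s_2^*(\chi_C)$ iff $\chi_C\circ s_1=\chi_C\circ s_2$ iff $s_1^{-1}(C)=s_2^{-1}(C)$. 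Thus, given a finite $F\subset B$, letting $\mathcal P$ be the clopen partition of $Y$ generated by the (finitely many) clopen sets corresponding to $F$, one checks $(s_1,s_2)\in\widetilde{\eps_{\mathcal P}}\ \Rightarrow\ (s_1^*,s_2^*)\in E_F$; conversely, given a clopen partition $\mathcal P$ of $Y$, taking $F$ to be the clopen sets of $\mathcal P$ viewed in $B$ gives $(s_1^*,s_2^*)\in E_F\ \Rightarrow\ (s_1,s_2)\in\widetilde{\eps_{\mathcal P}}$. Since $\Phi$ is a bijection carrying these cofinal families of basic entourages to one another in both directions, $\Phi$ and $\Phi^{-1}$ are both uniform, i.e. $\Phi$ is a uniformism.

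For $\Delta$ the corresponding computation is even shorter: $B^*=\mathrm{Hom}(B,\mathbb Z_2)$ carries the pointwise (product) uniformity with basic entourages indexed by finite $G\subset B$, and for $\sigma_1,\sigma_2\in\mathrm{End}(B)$ the transposes satisfy $\sigma_1^*(f)=f\circ\sigma_1$ and $\sigma_2^*(f)=f\circ\sigma_2$ agree on all $f$ in a finite set $\{f_1,\dots,f_k\}\subset B^*$ precisely when $f_j(\sigma_1\chi)=f_j(\sigma_2\chi)$ for all $j$ and all $\chi$ — and since the $f_j$ together only "see" a finite subgroup worth of coordinates, by finiteness of $\mathbb Z_2$ and the separating property of $B^*=\widehat B$ this is controlled by, and controls, agreement of $\sigma_1,\sigma_2$ on a suitable finite subset of $B$. (Alternatively, and more cleanly, $\Delta$ is the restriction of the adjoint anti-isomorphism already recalled for $\mathrm{End}(G^*)\leftrightarrow\mathrm{End}(G)$ with compact-open = uniform-convergence topologies, applied to $G=B^*$ discrete-dual situation; I would simply invoke \cite[Corollary 25.2]{Str} together with the fact that on a discrete group the compact-open uniformity is the pointwise one, and dually.) The expected obstacle is purely notational: keeping straight which side is "pointwise" and which is "uniform convergence", and correctly matching finite subsets of $B$ with clopen partitions of $Y$ — there is no analytic difficulty, only the care needed so that the cofinality of entourage families goes through in both directions. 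Composing $\Phi$ with $\Delta$ (restricted appropriately) also recovers, and is compatible with, the $S$-embedding $\delta\colon Y\hookrightarrow B^*$ and the maps $i_\alpha,i_\beta$ from the preceding discussion, which is a useful consistency check but not needed for the statement.
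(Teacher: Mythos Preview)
Your treatment of $\Phi$ is correct and essentially identical to the paper's: the paper works with a uniform \emph{subbase} on each side indexed by a single clopen $A\in\mathrm{clop}(Y)$ (equivalently, a single $\chi_A\in B$), checking $(s_1,s_2)\in[\chi_A]_1\Leftrightarrow s_1^{-1}(A)=s_2^{-1}(A)\Leftrightarrow \chi_A\circ s_1=\chi_A\circ s_2\Leftrightarrow (s_1^*,s_2^*)\in[\chi_A]_2$, whereas you pass to finite $F\subset B$ and clopen partitions. Same idea, same dictionary.

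Your direct computation for $\Delta$, however, has a genuine slip. You parametrize entourages of $\mathrm{End}(B^*)$ by finite subsets $\{f_1,\dots,f_k\}\subset B^*$ and ask when $\sigma_1^*(f_j)=\sigma_2^*(f_j)$ for each $j$; that is the \emph{pointwise} uniformity on $\mathrm{End}(B^*)$, not the uniformity of uniform convergence required by the statement. The correct basic entourages of $\mathrm{End}(B^*)$ under $\mathcal U_{\sup}$ are indexed by entourages of $B^*$, hence (via the product structure on $B^*\subset\mathbb Z_2^B$) by finite subsets $G\subset B$: namely
\[
\widetilde{\eps_G}=\{(\tau_1,\tau_2): (\tau_1 f)(\chi)=(\tau_2 f)(\chi)\ \ \forall f\in B^*\ \ \forall \chi\in G\}.
\]
The paper takes $G=\{\chi\}$ a subbase element and observes that $(\sigma_1^*,\sigma_2^*)\in\widetilde{\eps_{\{\chi\}}}$ reads $f(\sigma_1\chi)=f(\sigma_2\chi)$ for \emph{all} $f\in B^*$, which by the separating property of $B^*$ is exactly $\sigma_1(\chi)=\sigma_2(\chi)$, i.e.\ the subbase pointwise entourage on $\mathrm{End}(B)$ at $\chi$. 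Your version, by contrast, would need to deduce $f_j\circ\sigma_1=f_j\circ\sigma_2$ on \emph{all} of $B$ from agreement of $\sigma_1,\sigma_2$ on a finite subset of $B$, which is false in general; the claimed ``controlled by, and controls'' step does not go through as written.

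Your alternative route for $\Delta$ via the cited adjoint anti-isomorphism is legitimate, but note that the reference gives a topological anti-isomorphism for the compact-open topologies; to conclude a \emph{uniformism} you still owe the one-line entourage match above (or the observation that the explicit subbase correspondence $\chi\mapsto$ (pointwise entourage at $\chi$, uniform-convergence entourage for $[\chi]^*$) is a bijection of subbases carried by $\Delta$).
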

\begin{proof}
These maps are well defined and \textbf{bijective} by Pontryagin and Stone duality properties, respectively. 
	 We have the following actions: 
	$$\pi \colon C(Y,Y) \times Y \to Y$$ 
	$$\rm{End}_{R}(B) \times B \to B \ \ (s^* \chi)(x):=\chi(s x) \ \ \ \forall s \in C(Y,Y).$$
Consider the corresponding natural uniformities $\U_1$ and $\U_2$  on $C(Y,Y)$ and  $\rm{End}_{R}(B),$ respectively. 
 The uniformity $\U_1$ 
 is defined in Subsection \ref{d:unif-conv}. 
 Since $Y$ is a Stone space, its uniformity (in terms of uniform coverings \cite{Isb}) is generated by finite clopen partitions. Two element 
 partitions $$\eps_A:=\{\{A,A^c\}: A \in clop(Y)\}$$ define a subbase of the unique compatible uniformity on the Stone space $Y.$  
Taking into account that $B$ is discrete we define $\U_2$ on $\rm{End}_{R}(B) \subset B^B$ as the  pointwise uniformity (see 
Subsection \ref{d:p}).

Fortunately, typical subbase entourages in both cases can be indexed by elements 
of  $B.$ Note that each $\chi \in B$ has the form of the characteristic function $$\chi_A \colon Y \to \Z_2, \chi_A(x)=1 \Leftrightarrow x \in A$$ 
for some clopen subset $A \in clop(Y)$.  
Fix some $\chi:=\chi_A$ with $A \in clop(Y)$ and define 
		$$[\chi]_1:=
		\{(s_1,s_2) \in S \times S: (s_1(x),s_2(x))  \in \eps_A \ \ \forall x \in Y \}= $$
$$=
	\{(s_1,s_2) \in S \times S: \ s_1^{-1}(A) =s_2^{-1}(A) \wedge s_1^{-1}(A^c) =s_2^{-1}(A^c)  \}= $$
	$$ =\{(s_1,s_2) \in S \times S: \ s_1^{-1}(A) =s_2^{-1}(A)\}.$$


\sk 
The family $\{[\chi_A]_1:  A \in clop(Y) \}$ is a subbase of $\U_1$. 
For $\U_2$ a natural subbase is $\{[\chi_A]_2: \  A \in clop(Y) \}$, where  
	$$[\chi]_2:=\{(s_1^*,s_2^*) \in S^* \times S^*: \ s_1^*(\chi) = s_2^*(\chi)\}=$$
	$$  =\{(s_1^*,s_2^*) \in S^* \times S^*: \ \chi \circ s_1 = \chi \circ s_2\}.$$ 

\sk 


 


	Clearly, 	$(s_1,s_2) \in [\chi]_1 \Leftrightarrow (s_1^*,s_2^*) \in [\chi]_2$ because $ s_1^{-1}(A) =s_2^{-1}(A) \Leftrightarrow \chi \circ s_1 = \chi \circ s_2$. This proves that $\Phi \colon C(Y,Y) \to \rm{End}_{R}(B)$ is a uniformism. 
		
	\sk 
	
		Note that 
		$B^{\ast}:=\rm{Hom}(B,\Z_2)\subset \Z_2^B$ carries the pointwise topology. So, 
		the subbase entourage on the compact space $B^*$ naturally defined by the point $\chi \in B$ is  
		$$
		[\chi]^*:=\{(f_1,f_2) \in B^* \times B^*: \ f_1(\chi)=f_2(\chi)\}.
		$$ 
			Now consider the following actions: 
		$$\rm{End}(B) \times B \rightarrow B, \ (s,\chi) \mapsto s(\chi)$$
		$$\rm{End}(B^*) \times B^{\ast} \rightarrow B^{\ast},\ \ (s^*f)(\chi):=f(\chi \circ s).$$ 
	Denote by $\U_3$ the uniformity of uniform convergence on $\rm{End}(B^*) \subset \rm{Unif}(B^*,B^*)$ inherited from $\rm{Unif}(B^*,B^*)$, where the compact space $B^*$ carries the natural uniformity. The corresponding natural uniform subbase again can be parametrized by $\chi \in B$ as follows:   
	$$	[\chi]_3:=\{(s_1^*,s_2^*) \in \rm{End}(B^*) \times \rm{End}(B^*):  
		(s_1^* \psi, s_2^* \psi) \in [\chi]^* \ \   \forall \psi \in B^*\} = 
		$$
		$$ 
		=\{(s_1^*,s_2^*) \in \rm{End}(B^*) \times \rm{End}(B^*):  \ \psi(s_1(\chi))= \psi(s_2(\chi)) \ \  \forall \psi \in B^*\}, 
		$$ 
%
	Since $B^*$ separates the points of $B$ we obtain that
	$$
	s_1(\chi) = s_2(\chi) \Leftrightarrow \psi(s_1(\chi))= \psi (s_2(\chi)) \ \forall \psi \in B^*
	$$
	Therefore 	$s_1(\chi) = s_2(\chi) \Leftrightarrow (s_1^*,s_2^*) \in [\chi]_3$.  This proves that $\Delta \colon \rm{End}(B) \to \rm{End}(B^*)$ is a uniformism. 
\end{proof}

\sk 
In Theorem \ref{p:anti-iso} instead of Boolean rings  one may consider Boolean algebras (and the corresponding endomorphisms) as we mentioned in Remark \ref{r:rings}. 

\begin{cor}  \label{c:isom}  
		Let $Y$ be a Stone space. 
	The homomorphism 
	$$h=\Delta \circ \Phi \colon C(Y,Y) \hookrightarrow \rm{End}(B^*)$$ is an embedding of topological monoids and the pair $(h,\delta)$ is 
	equivariant
	 (meaning that $\delta(f(y))=h(f) (\delta(y))$, where $\delta \colon Y \hookrightarrow B^*$ is an embedding of compact spaces from Remark \ref{r:delta}.  
\end{cor}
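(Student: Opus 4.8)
The plan is to assemble Corollary \ref{c:isom} directly from Theorem \ref{p:anti-iso}, whose two anti-isomorphisms $\Phi \colon C(Y,Y) \to \rm{End}_R(B)$ and $\Delta \colon \rm{End}(B) \to \rm{End}(B^*)$ are uniformisms, together with the remarks about the evaluation map $\delta$. First I would note that $\Phi$ lands in $\rm{End}_R(B)$, which sits inside $\rm{End}(B)$ as a submonoid carrying the subspace (pointwise) uniformity, so the composition $h := \Delta \circ \Phi \colon C(Y,Y) \to \rm{End}(B^*)$ is well defined. Since $\Phi$ is an anti-isomorphism onto $\rm{End}_R(B)$ and $\Delta$ restricted to $\rm{End}_R(B)$ is an anti-isomorphism onto its image, the composite $h$ is a genuine \emph{homomorphism} (two anti-homomorphisms compose to a homomorphism) which is injective; and because both $\Phi$ and $\Delta$ are uniformisms (uniform isomorphisms onto their images), so is $h$, hence in particular $h$ is a topological embedding of monoids. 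This is essentially the whole content of the first assertion.

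Next I would verify equivariance of the pair $(h, \delta)$. Recall $\delta \colon Y \hookrightarrow B^*$, $y \mapsto \delta_y$ with $\delta_y(\chi) = \chi(y)$, which the preceding remark already records as a topological $S$-embedding (with image exactly $\rm{Hom}_R(B,\Z_2)$). Here the relevant actions are $\pi \colon C(Y,Y) \times Y \to Y$ and $\beta \colon \rm{End}(B^*) \times B^* \to B^*$ given, for $t = h(s)$, by $(t f)(\chi) = f(\chi \circ s)$ — this is exactly the action $\rm{End}(B^*) \times B^* \to B^*$ written in the proof of Theorem \ref{p:anti-iso} as $(s^* f)(\chi) := f(\chi \circ s)$. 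So I would simply compute: for $s \in C(Y,Y)$, $y \in Y$, $\chi \in B$,
\[
\big(h(s)\,\delta_y\big)(\chi) = \delta_y(\chi \circ s) = (\chi \circ s)(y) = \chi(s(y)) = \delta_{s(y)}(\chi),
\]
so $h(s)\,\delta_y = \delta_{s(y)}$, i.e. $h(s) \circ \delta = \delta \circ \pi_s$. That is exactly the statement that $(h,\delta)$ is equivariant.

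I would close by observing that $\delta$ is an embedding of compact spaces: $Y$ is a Stone space hence compact, $\delta$ is a topological embedding by the remark cited above, and $B^*$ is a compact (Boolean profinite) group, so no further work is needed there. The only point demanding a little care is bookkeeping about which uniformity each object carries — one must check that the pointwise uniformity on $\rm{End}_R(B)$ induced as a submonoid of $\rm{End}(B)$ agrees with the uniformity making $\Delta|_{\rm{End}_R(B)}$ a uniformism, but this is immediate since $\Delta$ is defined on all of $\rm{End}(B)$ as a uniformism and restricts to any submonoid as a uniformism onto its image. Thus the main obstacle is purely notational rather than mathematical: all the substance is already packaged in Theorem \ref{p:anti-iso} and the $S$-embedding property of $\delta$, and the corollary is a one-line composition plus a two-line equivariance check.
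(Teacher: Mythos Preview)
Your proposal is correct and matches the paper's intent: the corollary is stated without proof because it is meant to follow immediately from Theorem \ref{p:anti-iso} together with the preceding remark on $\delta$, and you have accurately unpacked exactly that---composing the two uniformism anti-isomorphisms to obtain a homomorphic topological embedding, then verifying equivariance by the direct computation $h(s)\delta_y=\delta_{s(y)}$.
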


\begin{remark} \label{c:antiisomorphic} 
Let $2^{\omega}$ be the Cantor cube. Its Boolean algebra is 
the countably infinite atomless Boolean algebra $B_{\infty}$. 
 Theorem \ref{p:anti-iso} implies that the topological monoids $\rm{End}_{R}(B_{\infty})$ and $C(2^{\omega},2^{\omega})$ are anti-isomorphic.  This fact was proved in \cite{Elliott} using \cite[Corollary 6.12]{Elliott}  (the property of $C(2^{\omega},2^{\omega})$ mentioned in Remark \ref{r:ResOfElliott}.1).
\end{remark}

\sk 
Now we give a \textbf{proof of Theorem} \ref{t:conditSEM}. 

We have to show that the following assertions are equivalent: 

	\ben
	\item $S$ is an l-NA topological monoid.
	
	\item $S$ is a topological submonoid of $C(Y,Y)$ for some Stone space $Y$  (where $w(Y) = w(S)$).
	
			\item  
	The opposite monoid $S^{op}$ can be embedded into 
	the monoid $\rm{End}_{R}(B)$ of endomorphisms of some discrete Boolean ring $B$ (with cardinality $|B| \leq w(S)$). 
	
	\item
	$S^{op}$ is a topological submonoid of $D^{D}$
	for some discrete set $D$ (where $|D|\leq w(S)$).
	
	\item There exists an ultra-metric space $(M,d)$ such that 
	$S^{op}$ is 
	a topological submonoid of the monoid $\Theta(M,d)$  of all 1-Lipschitz maps $M \to M$ equipped with the pointwise topology (where $w(M) \leq w(S)$). 
	
	\item There exists a 
	topologically compatible uniformity $\U$ on $S$ which is generated by a family of right $S$-nonexpansive  ultra-pseudometrics. 
	
	\item
	$S$ is topologically isomorphic to a submonoid of $\rm{Unif}(Y,Y)$ for some 
	NA uniform space 
	$(Y,\mathcal V)$.

	\item 	$S$ can be embedded into the monoid $\rm{End}(K)$ of endomorphisms of some profinite Boolean group $K$  (with $w(K) \leq w(S)$).  
	\item $S$ can be embedded into the monoid $\rm{End}(K)$ of endomorphisms for a compact
	 abelian topological group $K$ (with $w(K) \leq w(S)$).

		%
	\een
\begin{proof} 
	We are going to check that (1) $\Rightarrow$ (2) $\Rightarrow$ (3) $\Rightarrow$ (4) $\Rightarrow$ (5) $\Rightarrow$ (6) $\Rightarrow$ (7) $\Rightarrow$ (1) and (3) $\Rightarrow$ (8) $\Rightarrow$ (9)  
	$\Rightarrow$ (4). 
	\sk 
	
	(1) $\Rightarrow$ (2) 
	By 
	definition, there exists a $0$-dimensional proper $S$-compactification $\nu \colon S \hookrightarrow Y$ of the left action of $S$. 
	The associated continuous monoid homomorphism $h_{\nu} \colon S \to C(Y,Y)$ is a topological embedding because $\nu$ is a topological embedding and the orbit map $h(S) \to \nu(S), \ h(s) \mapsto \nu(s)$ is continuous. 
	As it was mentioned in 
	Remark \ref{r:oppos}.1, one may assume that $w(Y)=w(S)$.	 
	
	(2) $\Rightarrow$ (3)     
$S$ is a topological submonoid of $C(Y,Y)$ for some Stone space $Y$, where $w(Y) = w(S)$. 
Let $B=C(Y,\Z_2)$ be the discrete set of all clopen subsets in the Stone space $Y$. Then $|B|=w(Y)=w(S)$. Now, by Theorem \ref{p:anti-iso}, $C(Y,Y)^{op}$ (hence, also $S^{op}$) 
can be embedded into 
the monoid $\rm{End}_{R}(B)$ with cardinality $|B| \leq w(S)$.
	
	(3) $\Rightarrow$ (4) $S^{op}$ is embedded into $\rm{End}_{R}(B)$ 
	which is a submonoid of $B^B$. So, simply take $D:=B$. 
	
	
	(4) $\Rightarrow$ (5) Consider the two-valued ultra-metric on the discrete space $M:=D$. 
	
	(5) $\Rightarrow$ (6) We have the 
	left action of the opposite semigroup $S^{op} \times M \to M$. 
	For every $z \in M$ consider the  ultra-pseudometric  
	$$
	\rho_z(s,t):=d(s(z),t(z))  \ \ s,t \in S^{op}.
	$$ 
		The collection $\{\rho_z\}_{z \in M}$ generates a compatible zero-dimensional uniformity $\U$ of $S$. Also, 
	$\rho_z(us,ut)=d(us(z),ut(z))\leq d(s(z),t(z))=\rho_z(s,t)$ for every $u, s,t \in S^{op}$. 
	Therefore every $\rho_z$ is left nonexpansive for $S^{op}$. Hence, right nonexpansive for $S$. 
	
	(6) $\Rightarrow$ (7) 
	Let $\{\rho_i\}_{i \in I}$ be a
	family of right $S$-nonexpansive ultra-pseudometrics on $S$ which generates a  (necessarily, zero-dimensional) 
compatible	uniformity $\U.$
	Consider the left action $S \times S \to S$. Then 
	$\rho_i(su,tu) \leq \rho_i(s,t)$ for every $s,t,u \in S$. This implies that $\U$ is bounded (in the sense of Definition \ref{d:equiun})  with respect to the given left action. 
By Proposition \ref{l:noteasy}, this action is 
$\mathcal V$-equiuniform,
 where 
$\mathcal V=\U_S$ is an NA uniformity.
	Therefore, by Definition \ref{d:equiun}.3, 
	the associated monoid homomorphism $h \colon S \hookrightarrow \rm{Unif}(Y,Y)$ is well defined and continuous, where $Y=S$ is equipped with the uniformity $\mathcal V.$ Since $S$ is a monoid, $h$ is injective. The continuity of 
	$f\colon h(S) \to S, \ h(s) \mapsto s \cdot e =s$  follows from the containment $f(\widetilde{\eps}(h(s))\cap h(S))\subset \eps(s)$. This ensures that $h$ is an embedding of topological monoids.

		(7) $\Rightarrow$ (1) Apply Theorem \ref{t:UNIF}. 
	
%
	 
		(3) $\Rightarrow$ (8) Take $K:=B^*$ and apply Theorem \ref{p:anti-iso}. Note that $w(B^*)=w(Y)$. 
		
		(8) $\Rightarrow$ (9)	Every profinite Boolean group is compact abelian. 
		
 
(9) $\Rightarrow$ (4) Use the anti-isomorphism $\Psi \colon \rm{End}(K) \to \rm{End}(K^*)$ 
of topological rings (in particular, of topological monoids)  
from Remark \ref{r:Str} 
 and the fact that $\rm{End}(K^*)$ is a topological submonoid of $D^{D},$ where $D=K^\ast$ is discrete.  
	%
	%
	%
\end{proof}

\sk 

Recall that a topological group is NA if and only if it has a local base at the identity consisting of (open) subgroups. As a Corollary of Theorem \ref{t:conditSEM}, we obtain the following implication.
\begin{cor}\label{cor:opsub}
Let $S$ be a topological monoid. If   $S$ is either l-NA or r-NA, then it has a local base at the identity consisting of open submonoids.
\end{cor}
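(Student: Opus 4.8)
The plan is to reduce the r-NA case to the l-NA case by passing to the opposite monoid, since a local base of open submonoids at the identity of $S$ is the same thing as a local base of open submonoids at the identity of $S^{op}$. So I would assume $S$ is l-NA and invoke Theorem \ref{t:conditSEM}: by the equivalence (1) $\Leftrightarrow$ (2), there is a Stone space $Y$ and a topological embedding $h\colon S \hookrightarrow C(Y,Y)$, where $C(Y,Y)$ carries the compact-open (equivalently, uniform convergence) topology. It suffices to find a local base at the identity of $C(Y,Y)$ consisting of open submonoids, since intersecting such a base with $h(S)$ and pulling back along $h$ then gives the desired local base for $S$.

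Next I would write down an explicit candidate base at $\mathrm{id}_Y$ in $C(Y,Y)$. As in the proof of Theorem \ref{p:anti-iso}, the uniformity on $Y$ is generated by the finite clopen partitions, and a subbase of neighborhoods of $\mathrm{id}_Y$ is given, for each clopen $A \subseteq Y$, by
$$
W_A := \{ f \in C(Y,Y) : f^{-1}(A) = A \}.
$$
A cofinal family of neighborhoods is obtained by intersecting finitely many $W_{A_1}, \ldots, W_{A_n}$; equivalently, given a finite clopen partition $\mathcal{P} = \{P_1,\ldots,P_k\}$ of $Y$, set
$$
W_{\mathcal{P}} := \{ f \in C(Y,Y) : f(P_i) \subseteq P_i \text{ for all } i \}.
$$
These $W_{\mathcal{P}}$, as $\mathcal{P}$ ranges over finite clopen partitions of $Y$, form a local base at $\mathrm{id}_Y$ for the uniform-convergence topology, because the entourage $[Y,\eps_{\mathcal P}]$ forces $f$ to keep every point inside the block of $\mathcal P$ containing it. Each $W_{\mathcal{P}}$ is clearly open (it is a basic entourage neighborhood), and it is a submonoid: it contains $\mathrm{id}_Y$, and if $f,g \in W_{\mathcal{P}}$ then $(f\circ g)(P_i) \subseteq f(P_i) \subseteq P_i$, so $f \circ g \in W_{\mathcal{P}}$. (Note these are not subgroups — composition of two partition-preserving maps stays partition-preserving, but the inverse need not exist — which is exactly why we only get submonoids, reflecting the monoid/group asymmetry emphasized in the paper.)

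Finally I would assemble the pieces: for each finite clopen partition $\mathcal{P}$ of $Y$, the set $h^{-1}(W_{\mathcal{P}})$ is an open submonoid of $S$ (preimage of an open submonoid under a continuous monoid homomorphism), and since $h$ is a topological embedding and the $W_{\mathcal{P}}$ form a local base at $\mathrm{id}_Y$, the family $\{h^{-1}(W_{\mathcal{P}})\}_{\mathcal{P}}$ is a local base at $e_S$. This proves the l-NA case, and the r-NA case follows by applying it to $S^{op}$ and using Remark \ref{r:oppos}.3. The only mild subtlety — and the one place to be careful — is verifying that the $W_{\mathcal{P}}$ genuinely form a \emph{local base} (not merely a subbase) at $\mathrm{id}_Y$ for the compact-open topology on $C(Y,Y)$; this is where one uses that $Y$ is a Stone space, so that every entourage from an open cover refines to one coming from a finite clopen partition, making the partition-indexed neighborhoods cofinal.
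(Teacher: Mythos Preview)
Your proof is correct but takes a different route from the paper's. The paper uses the equivalence (1) $\Leftrightarrow$ (6) of Theorem~\ref{t:conditSEM}: it takes a generating family $\{\rho_i\}$ of right $S$-nonexpansive ultra-pseudometrics on $S$ and shows directly that each open ball $B_{\rho_i}(e,r)$ is a submonoid, via the computation
\[
\rho_i(xy,e)\leq \max\{\rho_i(xy,y),\rho_i(y,e)\}\leq \max\{\rho_i(x,e),\rho_i(y,e)\}.
\]
So the paper's argument is internal to $S$ and metric in flavor, whereas you embed $S$ externally into $C(Y,Y)$ via (1) $\Leftrightarrow$ (2) and exhibit the partition-stabilizers $W_{\mathcal P}$ as an explicit base of open submonoids there. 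Both approaches are short; the paper's is slightly more self-contained (no need to analyze the uniform topology on $C(Y,Y)$), while yours has the side benefit of showing that $C(Y,Y)$ itself has a local base of open submonoids for any Stone space $Y$, and the closure-under-composition check $f(g(P_i))\subseteq f(P_i)\subseteq P_i$ is arguably more transparent than the ultra-pseudometric inequality. Your handling of the r-NA case via $S^{op}$ matches the paper's.
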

\begin{proof}
Assume first that $S$ is an l-NA topological monoid. By the equivalence (1) $\Leftrightarrow$ (6), there exists a
family $\{\rho_i\}_{i \in I}$ of right $S$-nonexpansive ultra-pseudometrics    on $S$ which generates its topology. For every $i\in I$ and $r>0$ the open ball $B_{\rho_i}(e,r):=\{x: \ \rho_i(x,e)<r\}$ is a submonoid of $S.$ Indeed, since $\rho_i$ is right $S$-nonexpansive ultra-pseudometric it holds that $$\rho_i(xy,e)\leq \max \{\rho_i(xy,y),\rho_i(y,e) \}\leq \max \{\rho_i(x,e),\rho_i(y,e)\}.$$
This implies that $S$ has a local base at the identity consisting of open submonoids. The case of an  r-NA topological monoid can be proved similarly taking into account that $B_{\rho_i}(e,r)$ is a submonoid of $S$ also when the ultra-pseudometric  $\rho_i$ is left $S$-nonexpansive.
\end{proof}

\begin{remark} \label{r:notNA} 	
There exists a locally compact metrizable separable zero-dimensional  
topological monoid $S$ that has a local base at the identity consisting of open submonoids which is neither  l-NA nor r-NA. Indeed, one may use
the monoid $S$ from 
 Example \ref{ex:contrast} to construct topological monoids $S_1:=S$ and $S_2:=S^{op}$ such that $S_1$ is  l-NA but not r-NA while $S_2$ is  r-NA and not l-NA .  By Corollary \ref{cor:opsub}, 
both $S_1$ and $S_2$ have a local base at the identity  consisting of open submonoids. Then, the topological monoid $S_1\times S_2$ has the latter property while it is neither l-NA nor r-NA (even, not l-compactifiable and nor r-compactfifiable).

In contrast, note that every locally compact zero-dimensional topological \textit{group} must be NA as it follows by a classical result of van Dantzig (see, for example, \cite[Theorem 7.7]{HR}). 
\end{remark}

\sk 
Recall again that the Polish symmetric group $S(\N) \subset \N^{\N}$ is  universal for second countable NA topological groups. The following result is a natural analog for NA monoids. 

\begin{thm} \label{c:universal} The Polish monoid 
	$\N^{\N}$ is a universal separable metrizable 
	r-NA monoid. 
	More generally, $\kappa^{\kappa}$ is a universal r-NA monoid of weight $\kappa$ for every infinite cardinal $\kappa$. 
\end{thm}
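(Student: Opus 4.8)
The plan is to establish the universality of $\kappa^\kappa$ as an r-NA monoid of weight $\kappa$ by combining the characterization theorem (Theorem \ref{t:conditSEM}) with a cardinality-controlled embedding argument. First I would observe that $\kappa^\kappa = D^D$ for the discrete set $D$ of cardinality $\kappa$, so by Corollary \ref{c:D^D} it is indeed r-NA (and its weight is $\kappa$). For the universality direction, let $S$ be an arbitrary r-NA monoid with $w(S) \leq \kappa$. Then $S^{op}$ is l-NA with $w(S^{op}) = w(S) \leq \kappa$, so by the equivalence (1) $\Leftrightarrow$ (4) of Theorem \ref{t:conditSEM} applied to $S^{op}$, we get that $(S^{op})^{op} = S$ is a topological submonoid of $D^D$ for some discrete set $D$ with $|D| \leq w(S^{op}) = w(S) \leq \kappa$.

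The remaining gap is purely a matter of padding the cardinality of $D$ up to exactly $\kappa$. If $|D| < \kappa$, I would choose a set $D'$ with $D \subseteq D'$ and $|D'| = \kappa$, and note that $D^D$ embeds as a topological submonoid of $(D')^{D'}$: extend each $f\colon D \to D$ to $\bar f\colon D' \to D'$ by letting $\bar f$ be the identity on $D' \setminus D$. One checks directly that $f \mapsto \bar f$ is an injective monoid homomorphism preserving the identity, and that it is a topological embedding with respect to the pointwise topologies on both sides (the subbasic open sets $[x_0, \{y_0\}]$ on $(D')^{D'}$ with $x_0, y_0 \in D$ restrict to the subbasic open sets on $D^D$, and the extra coordinates in $D' \setminus D$ are constant on the image). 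Composing the two embeddings $S \hookrightarrow D^D \hookrightarrow (D')^{D'} \cong \kappa^\kappa$ gives the desired topological embedding of $S$ into $\kappa^\kappa$.

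For the case $\kappa = \omega$ (the Polish monoid $\N^\N$) the same argument applies verbatim, noting that $\N^\N$ is Polish and that any separable metrizable r-NA monoid has weight $\leq \omega$, so it embeds into $\N^\N$; since $\N^\N$ is itself separable metrizable r-NA, it is universal in that class. I do not expect a serious obstacle here: everything reduces to Theorem \ref{t:conditSEM} plus the elementary padding lemma for $D^D$. The only point requiring a little care is verifying that $f \mapsto \bar f$ is genuinely a topological embedding (not merely a continuous injection), which is where one must use that the pointwise topology is initial with respect to the evaluation maps and that the coordinates indexed by $D' \setminus D$ carry no information on the image; I would state this as a small auxiliary observation before invoking it.
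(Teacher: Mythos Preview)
Your proof is correct and follows essentially the same approach as the paper: both invoke Theorem~\ref{t:conditSEM} (the paper via condition~(3) and then the inclusion $\mathrm{End}_{R}(B) \subset B^{B}$, you directly via condition~(4)) to embed $S$ into $D^{D}$ with $|D| \leq w(S)$. Your explicit padding lemma for the case $|D| < \kappa$ is a detail the paper glosses over (it simply writes $B^{B} \simeq \N^{\N}$), so your treatment is in fact more careful on this point.
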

\begin{proof}
	If $S$ is second countable then  one may assume that $B$ in Theorem \ref{t:conditSEM}  is countable. So, $S$ is embedded into $\rm{End}_{R}(B)$ which, in turn, is embedded into $B^B \simeq \N^{\N}$. 
\end{proof}

\sk 
By results of \cite{MeSc01}, $\Homeo(2^{\omega})$ is a universal Polish NA group. Moreover, the action of $G:=\Homeo(2^{\omega})$ on the Cantor cube $2^{\omega}$ is
universal in the class of all actions $G \times Y \to Y$, where $Y$ is a metrizable Stone space and $G$ is a topological subgroup of $\Homeo (Y)$.  
More precisely, there exists an  equivariant pair $(h,\a)$, where $h \colon G \rightarrow \Homeo(2^{\omega})$ is an embedding of topological groups and $\a \colon Y \hookrightarrow 2^{\omega}$ is a topological embedding.

In fact, the following \textit{monoid version} holds.

\begin{thm} \label{c:CANTOR} 
	The Polish monoid $C(2^{\omega},2^{\omega})$ is universal for separable metrizable 
	l-NA monoids. Moreover, the action of $C(2^{\omega},2^{\omega})$ on $2^{\omega}$ is 
	universal in the class of all actions $S \times Y \to Y$, where $Y$ is a metrizable Stone space and $S$ is a topological submonoid of $C(Y,Y)$.  
\end{thm}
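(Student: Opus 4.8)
The plan is to derive the first assertion from the second one (the universality of the action), and to prove the second by the classical device of multiplying the given Stone space by the Cantor cube. I first note that $C(2^{\omega},2^{\omega})$ belongs to the class in question: it is Polish by Remark \ref{r:ResOfElliott}(1) and l-NA by Corollary \ref{c:C(K,K)}. Now let $S$ be an arbitrary separable metrizable l-NA monoid. By Theorem \ref{t:conditSEM}, implication (1)$\Rightarrow$(2), $S$ is a topological submonoid of $C(Y,Y)$ for some Stone space $Y$ with $w(Y)=w(S)=\aleph_{0}$; thus $Y$ is a metrizable Stone space, and it is nonempty, being a compactification of $S$. The induced action $S\times Y\to Y$ is continuous by Fact \ref{l:co}(3), so it is precisely an action of the kind treated in the second assertion. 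Hence, once that assertion is proved, the resulting embedding $C(Y,Y)\hookrightarrow C(2^{\omega},2^{\omega})$ restricts to a topological embedding of $S$, which gives the first assertion.

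For the second assertion, let $Y$ be a nonempty metrizable Stone space and $S\subseteq C(Y,Y)$ a topological submonoid. I would first observe that $Y\times 2^{\omega}$ is a nonempty compact metrizable zero-dimensional space with no isolated points, hence homeomorphic to $2^{\omega}$ by the classical topological characterization of the Cantor cube; fix such a homeomorphism $\phi\colon Y\times 2^{\omega}\to 2^{\omega}$ and a base point $z_{0}\in 2^{\omega}$. Define $\alpha\colon Y\to 2^{\omega}$ by $\alpha(y)=\phi(y,z_{0})$ and $h\colon C(Y,Y)\to C(2^{\omega},2^{\omega})$ by $h(s)=\phi\circ(s\times\mathrm{id}_{2^{\omega}})\circ\phi^{-1}$. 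Then $\alpha$ is a topological embedding of compact spaces, $h$ is a monoid homomorphism preserving the identity, and $h(s)(\alpha(y))=\phi(s(y),z_{0})=\alpha(s(y))$, so $(h,\alpha)$ is equivariant. It then remains to verify that $h$ is an embedding of topological monoids; since conjugation by $\phi$ is a topological isomorphism $C(Y\times 2^{\omega},Y\times 2^{\omega})\to C(2^{\omega},2^{\omega})$, it suffices to show that $s\mapsto s\times\mathrm{id}_{2^{\omega}}$ embeds $C(Y,Y)$ into $C(Y\times 2^{\omega},Y\times 2^{\omega})$. Both monoids carry the uniformity of uniform convergence (compactness of $Y$ and $2^{\omega}$), and a short computation with basic entourages shows that $(s_{1}\times\mathrm{id},s_{2}\times\mathrm{id})$ lies in the entourage associated with a product entourage $\varepsilon\times\delta$ (with $\varepsilon$ an entourage of $Y$ and $\delta$ one of $2^{\omega}$) if and only if $(s_{1},s_{2})\in\widetilde{\varepsilon}$; as the product entourages form a base of the uniformity of $Y\times 2^{\omega}$, this yields that $s\mapsto s\times\mathrm{id}_{2^{\omega}}$ is a uniform, hence topological, embedding.

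The essential content of the proof is the homeomorphism $Y\times 2^{\omega}\cong 2^{\omega}$ together with the fact that it intertwines the given action with the one obtained by acting only on the first coordinate; this makes the argument considerably softer than the group version in \cite{MeSc01}, since no equivariant extension of maps is required. The one point I expect to demand some care is the final check that $s\mapsto s\times\mathrm{id}$ is a topological embedding for the compact-open topologies: it is a routine but not entirely automatic manipulation of basic entourages, and everything else in the argument is formal.
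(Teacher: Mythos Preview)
Your argument is correct, but it follows a different route from the paper's proof. Both proofs invoke Brouwer's topological characterisation of the Cantor cube, but they apply it to different spaces: you take the product $Y\times 2^{\omega}$ and embed $C(Y,Y)$ via $s\mapsto s\times\mathrm{id}_{2^{\omega}}$ (conjugated by a homeomorphism $\phi$), whereas the paper passes through the Stone/Pontryagin machinery of Section~\ref{s:dualities}: for the countable Boolean ring $B=C(Y,\Z_2)$ the Pontryagin dual $B^{*}$ is a zero-dimensional compact metrizable topological group without isolated points, hence a copy of $2^{\omega}$, and the equivariant pair is supplied ready-made by Corollary~\ref{c:isom} (the embedding $\delta\colon Y\hookrightarrow B^{*}$ together with $\Delta\circ\Phi\colon C(Y,Y)\hookrightarrow \rm{End}(B^{*})\subset C(B^{*},B^{*})$). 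Your approach is more elementary and self-contained, needing only the product trick and a short entourage check; the paper's approach avoids that check entirely because the embedding of monoids comes for free from the uniformisms established in Theorem~\ref{p:anti-iso}, and it illustrates how the duality framework delivers the result as a direct corollary. Either way the first assertion reduces to the second exactly as you describe.
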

\begin{proof}
	For every countable infinite Boolean ring $B$ the corresponding Pontryagin dual $B^*$ of the group $B$ will be a compact zero-dimensional, compact metric space. Since $B^*$ is a (compact) topological group and infinite it has no isolated points. 
	Recall that 
	a classical  (since 1910) theorem of Brouwer characterizes the Cantor space $2^{\omega}$ as the unique zero-dimensional, compact metric space without isolated points. 
	Now, choose $B:=C(Y,\Z_2)$ as the Boolean ring of $Y$. 
	So, using again the Stone duality and Corollary \ref{c:isom} we complete the proof. 
\end{proof}


\begin{prop} \label{p:emb}  \ 
	 \begin{enumerate}
	 	\item 	  $C(2^{\omega},2^{\omega})$ is embedded into $(\N^{\N})^{op}$ and $(\N^{\N})^{op}$ is embedded into $C(2^{\omega},2^{\omega})$.  
	
	 	\item $(\N^{\N})^{op}$ and $C(2^{\omega},2^{\omega})$ are not isomorphic as topological monoids.
	 \end{enumerate} 
\end{prop}
\begin{proof}
	(1) It is an immediate corollary of Theorems \ref{c:universal} and \ref{c:CANTOR}.
	
	(2) 
	It is well known that the symmetric group $S_{\N}$ is universal for NA second countable groups. The same is true for the group of homeomorphisms $\Homeo (2^{\omega})$, \cite{MeSc01}. Therefore, $S_{\N}$ is embedded into $\Homeo (2^{\omega})$ and also 
	$\Homeo (2^{\omega})$ is embedded into $S_{\N}$. However, the universal minimal dynamical systems of these groups have completely different nature according to their concrete descriptions due to Glasner and Weiss \cite{GW02, GW03}.  
	It follows that these groups are not topologically isomorphic. One more conclusion of this observation is that $(\N^{\N})^{op}$ and $C(2^{\omega},2^{\omega})$ are not isomorphic as topological monoids. The reason is that their subgroups of invertible elements are just $S_{\N}$ and $\Homeo (2^{\omega})$. On the other hand, any isomorphism of monoids induces an isomorphism of their groups of invertible elements. 
\end{proof}

\begin{remark} \label{r:CompSem} Here we give some examples of lr-NA monoids  (Definition \ref{d:NA-action}.3).   
	\begin{enumerate}
		\item 
		Any compact 
		zero-dimensional topological monoid $S$ is lr-NA. 
		In particular, $S$ is 	
		embedded into the monoid $\kappa^{\kappa}$. If, in addition, $S$ is metrizable, then it is embedded into the Polish monoid $\N^{\N}$; this implies \cite[Theorem 1.5]{Bard}. Such $S$ is also embedded into $C(2^{\omega},2^{\omega})$ by Theorem \ref{c:CANTOR}.  
		\item 
		Any product of discrete monoids is lr-NA. 
		It is enough to prove the case of a discrete monoid $S$ 
		(the class lr-NA is productive). Since such $S$ is opposite of the discrete monoid $S^{op}$, it is enough to show that $S$ is r-NA. In order to see this observe that the Cayley homomorphism $h \colon S \to (S^S, \tau_p)$ (by the left translations) is an embedding of topological monoids. This implies the embedding of a countable product of countable discrete monoids into $\N^{\N}$ which was proved in \cite[Lemma 2.2]{Bard}. 
	\end{enumerate} 
\end{remark}

Note that by \cite[Proposition 3.6]{Bard}, there exists a locally compact Polish countable topological monoid which cannot be embedded into the topological monoid $\N^{\N}$. This answers Question 5.6 from \cite{Elliott}.

\begin{example} \label{ex:contrast} 
	There exists a locally compact 
	metrizable separable zero-dimensional 
	topological monoid $S$ which is r-NA but not l-NA (even not l-compactifiable). 
	Indeed, 
	consider the 2-point multiplicative monoid $\{0, 1\}$ and endow the Cantor cube
	$C :=\{0, 1\}^{\N_0}$ with the topological monoid structure of
	pointwise multiplication. Let $\N_0 := \N \cup \{0\}$. Consider the following 
	continuous left action 
	$$\pi\colon  C \times \N_0 \to \N_0, \ \  \pi(c,n)=c_nn,$$
	where $c=(c_k)_{k \in \N_0} \in C$. 
	
	 Denote by $S:=C \sqcup_{\pi} \N_0$ a new monoid 
	defined as follows. As a topological space it is a {\it disjoint sum} $C \cup
	\N_0 $. The multiplication is defined by setting:
	
	$a \circ b:=\pi(a,b):=a_n b$  if $a \in C , \ b \in \N_0 $
	
	$a \circ b:=ab$  if $a \in C , \ b \in C $
	
	and
	
	$a \circ b:=a$ if $a \in \N_0 \ \forall b \in S$. 
	
		Clearly, ${\mathbf 1}:=(1,1,
	\cdots)$ is the identity of $S$. Observe that 
	for every neighborhood $U$ of ${\mathbf 1}$ we have $0 \in U \N$. 
	
Let $\rho$ be the standard ultra-metric on the Cantor cube $C$ defined for every $s,t \in C$ as 
$$\rho(s,t):=\frac{1}{\min\{n \in \N: \ s_n \neq t_n\}}.$$ 
 It is non-expansive under left (right) translations. Extend it to  
	a compatible ultra-metric $d$ on $S=C \sqcup \N_0$ as follows: 
	$d(s,t)=\rho(s,t)$ for every $s,t \in C$ and $d(s,t)=1$ for every other cases with distinct $s,t$. 
	Then $d$ is a compatible ultra-metric on $S$ which is left nonexpansive. 
	By Fact \ref{f:BodSchn}, 
	$S$ is r-NA being embedded into the topological monoid $\N^\N$.

	Assuming that $S$ is left-compactifiable, 
	there exists a proper $S$-compactification $\nu \colon  S \hookrightarrow Y$ of the left action $S \times S \to S$. For simplicity we identify $S$ and $\nu(S)$. Consider $\N=\nu(\N)  \subset S$, a closed subset of $S$ and $0 \in S$ with $0 \notin \N$. Then $0 \notin cl_Y (\N)$. Clearly, $K:=cl_Y (\N)$ is a compact subset of $Y$.  
	Using the standard compactness argument and 
	the continuity of the action, there exists a neighborhood $U$ of ${\mathbf 1} \in S$ such that $0 \notin UK$. Then $0 \notin U\N$, a contradiction. 	
\end{example}

The following lemma is a non-archimedean adaptation of some classical facts about uniform spaces and pseudometrics 
going back to A. Weil (see, for example, \cite[Metrization Lemma 0.29]{RD}).  

\begin{lem} \label{l:ultra} \ 
	\begin{enumerate} 
		 \item Let $X$ be a set and $F:=\{\s_n : n \in \N\},$ with $\s_{n+1} \subset \s_n,$ be a countable monotone family 
		 of equivalence relations.
		 Then there exists an ultra-pseudometric $d_F$ on $X$ such that 
		$$
		\s_{n+1} \subset \{(x,y) \in X^2: d(x,y) < 2^{-n}\} \subset \s_n.
		$$
		\item Let $\U$ be a pre-uniformity on a set $X$. Then $\U$ is NA if and only if  
		 there exists a family $\g:=\{d_i: i \in I\}$ of ultra-pseudometrics on $X$ (with $d_i \leq 1$ for every $i \in I$) which generates $\U$. Moreover, if $\g$ is countable, then there exists an ultra-pseudometric $d$ on $X$ which generates $\U$. 
		
	\end{enumerate} 
\end{lem}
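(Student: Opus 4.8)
\textbf{Proof plan for Lemma \ref{l:ultra}.}

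For part (1), the plan is to mimic the classical Weil construction, but exploiting that all the $\s_n$ are equivalence relations so as to get the ultrametric (strong triangle) inequality instead of the ordinary one. First I would set $\s_0 := X \times X$ and define an auxiliary function $g \colon X^2 \to \R$ by $g(x,y) := 0$ if $(x,y) \in \bigcap_n \s_n$, $g(x,y) := 2^{-n}$ if $(x,y) \in \s_n \setminus \s_{n+1}$, and $g(x,y):=1$ if $(x,y) \notin \s_0$ — here the monotonicity $\s_{n+1}\subset \s_n$ guarantees this is well defined. Then I would set
$$
d_F(x,y) := \inf\left\{ \max_{0 \le i < k} g(z_i, z_{i+1}) \ : \ k \in \N, \ z_0 = x, \ z_k = y, \ z_i \in X \right\},
$$
i.e. the same chain construction as usual but with $\max$ replacing $\sum$. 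Symmetry and $d_F(x,x)=0$ are immediate; the ultrametric inequality $d_F(x,y) \le \max\{d_F(x,z), d_F(z,y)\}$ follows directly from the $\max$-over-chains definition by concatenating chains. The crucial (and only slightly delicate) point is the two-sided sandwich: the inequality $d_F(x,y) \le g(x,y)$ is trivial (take the one-step chain), which gives $\s_{n+1}\subset \{d_F < 2^{-n}\}$ after noting $(x,y)\in\s_{n+1}$ forces $g(x,y)\le 2^{-(n+1)}<2^{-n}$; and for the reverse inclusion $\{d_F < 2^{-n}\} \subset \s_n$ I would prove, by induction on chain length $k$, the estimate that if $\max_{i} g(z_i,z_{i+1}) < 2^{-n}$ then $(z_0,z_k)\in \s_n$ — here is exactly where being an equivalence relation is used: each consecutive pair lies in $\s_{n+1} \subset \s_n$ (since $g < 2^{-n}$ means $g \le 2^{-(n+1)}$, as $g$ takes only values $0$ and powers of $2$), and transitivity of $\s_n$ closes the chain. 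I expect this induction — and in particular keeping straight the off-by-one in the exponents — to be the main (mild) obstacle; the ordinary Weil lemma needs the extra factor of $2$ precisely because $\sum$ does not collapse the way $\max$ does, so here the argument is actually cleaner.

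For part (2), one direction is immediate: if $\U$ is generated by a family of ultra-pseudometrics $d_i$, then for each $i$ and each $r>0$ the relation $\{(x,y): d_i(x,y) < r\}$ is an equivalence relation (reflexivity and symmetry are clear; transitivity is the strong triangle inequality), so these form a base of $\U$ consisting of equivalence relations, hence $\dim \U = 0$. Conversely, suppose $\U$ is NA, so it has a base $\g$ of entourages each of which is an equivalence relation. For each $\s \in \g$ I would build a decreasing sequence starting at $\s$: choose $\s_1 := \s$ and inductively pick $\s_{n+1} \in \g$ with $\s_{n+1} \subset \s_n$ (possible since $\g$ is a base, so it is downward-directed up to refinement; intersecting with the previous ones and using that $\g$ is a base keeps us inside $\g$ modulo passing to a refinement — this is routine). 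Apply part (1) to get an ultra-pseudometric $d_\s$ with $\{d_\s < 2^{-n}\} \subset \s_n$, in particular $\{d_\s < 1/2\} \subset \s$; replacing $d_\s$ by $\min\{d_\s, 1\}$ keeps it $\le 1$ and does not change the small balls. Then $\g' := \{d_\s : \s \in \g\}$ generates a uniformity contained in $\U$ (each $\{d_\s < r\}$, being an intersection of the $\s_n$-type sets, lies in $\U$) and containing $\U$ (each $\s \in \g$ contains the entourage $\{d_\s < 1/2\}$), so $\g'$ generates $\U$.

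Finally, for the "moreover" clause of (2): if $\U$ is generated by a countable family $\{d_i : i \in \N\}$ of ultra-pseudometrics, set $e_n := \max\{d_1, \dots, d_n\}$, which is again an ultra-pseudometric (a finite max of ultra-pseudometrics satisfies the strong triangle inequality), and $(e_n)$ is an increasing sequence generating the same uniformity. Then I would put $d := \sum_{n=1}^\infty 2^{-n} \min\{e_n, 1\}$; this is NOT in general an ultra-pseudometric, so instead I would apply part (1) once more: the sets $\s_n := \{(x,y) : e_n(x,y) < 2^{-n}\}$ form a countable decreasing family of equivalence relations whose associated balls are cofinal in $\U$ (using that the $e_n$ are increasing, a routine check shows $\{\s_n\}$ is a base of $\U$), so the single ultra-pseudometric $d_F$ produced by part (1) generates $\U$. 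The only point to watch here is verifying that $\{\s_n\}$ really is a base — i.e. that given $d_i$ and $\eps$ one can find $n$ with $\s_n \subset \{d_i < \eps\}$ — which follows by taking $n \ge i$ large enough that $2^{-n} < \eps$ and using $e_n \ge d_i$.
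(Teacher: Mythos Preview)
Your argument for part (1) and the main claim of (2) coincides with the paper's: it too sets $\s_0 := X\times X$, defines the same auxiliary function (called $c$ there), and takes the identical chain $\inf$--$\max$ formula for $d_F$, leaving the sandwich verification to the reader; your transitivity argument is exactly what is intended. (A small simplification both of you miss: since every $\s_n$ is already an equivalence relation, $g$ itself satisfies the strong triangle inequality directly, so in fact $d_F=g$ and the chain construction collapses.)

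For the ``moreover'' clause of (2) you take a genuinely different route. The paper simply sets $d:=\sup_n d_n$ and declares this to be the desired ultra-pseudometric. Your reduction back to part (1) via $e_n:=\max\{d_1,\dots,d_n\}$ and $\s_n:=\{e_n<2^{-n}\}$ is more work but more robust: the bare supremum of a countable family can generate a strictly \emph{finer} uniformity than $\U$ (e.g.\ on $X=\N$ let $d_n(x,y)=1$ when $x\ne y$ and $\min\{x,y\}\le n$, else $0$; each $d_n$ is a bounded ultra-pseudometric, yet $\sup_n d_n$ is the discrete metric while the family $\{d_n\}$ generates a non-discrete uniformity). So the paper's shortcut has a gap here, and your detour through part (1) --- checking that the $\s_n$ form a countable base of equivalence relations for $\U$ and then invoking the sandwich --- is what actually makes the ``moreover'' go through.
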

\begin{proof}
	(1) Let $\s_0:=X \times X$ be the tautological equivalence relation. 
	The desired ultra-metric is  
	$$
	d_F(x,y):=\inf \big\{\max_{0\leq i\leq k}c(x_i,x_{i+1}): \ k \in \N, x_i \in S, x_0=x, x_{k+1}=y\big\}, 
	$$
	where $c(x,y):=2^{-n}$ if there is $n \in \N \cup \{0\}$ such that $(x,y) \in \s_n \setminus \s_{n+1}$ 
and $c(x,y):=0$ otherwise.
	
	(2) Easily follows from (1) (taking into account that for every ultra-pseudometric $\rho$ the formula defines a new ultra-pseudometric such that $\rho$ and $\rho^*:=\min \{\rho,1\}$ generate the same pre-uniformity). 
	
	If $\g:=\{d_n: n \in \N\}$ is countable with $d_n \leq 1$ then $d:=\sup\{d_n: n \in \N\}$ is the desired ultra-pseudometric.  	
\end{proof}

\sk 

Let $\s$ be an equivalence relation on a monoid $S$. We say that $\s$ is a \textit{left congruence} if left translations preserve $\s$. 

\begin{prop} \label{p:congr} 
	The following assertions are equivalent: 
	\ben
	\item $S$ is an r-NA topological monoid. 
	
	
		\item There exists a zero-dimensional uniformity $\U$ on $S$ which is generated by a family $\{\rho_i: i \in I\}$ of left 
	$S$-nonexpansive ultra-pseudometrics.  
	
	\item There exists a zero-dimensional topologically compatible uniformity $\U$ on $S$ 
 with a 
	 basis  $\gamma:=\{\s_i: i \in I\}$ which consists of equivalence relations $\s_i$, where each $\s_i$ is a left congruence.  

	\een
\end{prop}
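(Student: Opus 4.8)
The plan is to establish the cycle (1) $\Leftrightarrow$ (2) $\Leftrightarrow$ (3), running most of it through the opposite-monoid translation so that the already-proven l-NA equivalences (Theorem \ref{t:conditSEM}, in particular (1) $\Leftrightarrow$ (6)) and Lemma \ref{l:ultra} can be invoked directly. For (1) $\Leftrightarrow$ (2): $S$ is r-NA if and only if $S^{op}$ is l-NA (Remark \ref{r:oppos}.3), and by Theorem \ref{t:conditSEM} the latter holds if and only if there is a topologically compatible uniformity on $S^{op}$ generated by a family of right $S^{op}$-nonexpansive ultra-pseudometrics. A pseudometric is right $S^{op}$-nonexpansive exactly when it is left $S$-nonexpansive (since multiplication in $S^{op}$ is reversed), and a uniformity generated by ultra-pseudometrics is zero-dimensional by Lemma \ref{l:ultra}(2); conversely every zero-dimensional uniformity is so generated. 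This gives (1) $\Leftrightarrow$ (2) with essentially no new work.

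For (2) $\Rightarrow$ (3): given the family $\{\rho_i\}_{i\in I}$ of left $S$-nonexpansive ultra-pseudometrics generating $\U$, form for each $i$ and each $n\in\N$ the relation $\s_{i,n}:=\{(x,y): \rho_i(x,y)<2^{-n}\}$. Because $\rho_i$ is an ultra-pseudometric, each $\s_{i,n}$ is an equivalence relation, and the finite intersections of these form a basis $\gamma$ of $\U$ consisting of equivalence relations. Left $S$-nonexpansiveness of $\rho_i$ means $\rho_i(sx,sy)\le \rho_i(x,y)$ for all $s\in S$, so $(x,y)\in\s_{i,n}$ implies $(sx,sy)\in\s_{i,n}$; hence each $\s_{i,n}$ — and each finite intersection — is a left congruence. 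Thus $\gamma$ witnesses (3).

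For (3) $\Rightarrow$ (2): given the zero-dimensional compatible uniformity $\U$ with basis $\gamma=\{\s_i\}$ of equivalence relations that are left congruences, apply Lemma \ref{l:ultra}(2) to obtain ultra-pseudometrics generating $\U$; but we must arrange them to be left $S$-nonexpansive. The clean route is to invoke Lemma \ref{l:ultra}(1): for each countable monotone subfamily $\{\s_n\}$ drawn from $\gamma$ (passing to a cofinal monotone chain, or just using that each $\s_i$ with its own powers $\s_i\supset\s_i$ already gives a one-step chain), build $d_F$ as in that lemma. Since $c(x,y)$ in the construction of $d_F$ depends only on which $\s_n\setminus\s_{n+1}$ the pair $(x,y)$ lies in, and each $\s_n$ is a left congruence, we get $c(sx,sy)\le c(x,y)$, and taking infima over chains of intermediate points (noting $s x_0,\dots,sx_{k+1}$ is again an admissible chain) yields $d_F(sx,sy)\le d_F(x,y)$, i.e. left $S$-nonexpansiveness. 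The family of all such $d_F$ then generates a zero-dimensional uniformity; one checks it generates $\U$ itself, which requires that every $\s_i\in\gamma$ is refined by (and refines, up to the $2^{-n}$ bookkeeping) one of the metric entourages, exactly as in Lemma \ref{l:ultra}(1).

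The main obstacle is the last step, (3) $\Rightarrow$ (2): one must produce ultra-pseudometrics that are \emph{simultaneously} $S$-nonexpansive and fine enough to recover the whole uniformity, and the Weil-type construction of Lemma \ref{l:ultra}(1) only handles one countable chain at a time. I expect to resolve this by choosing, for each $\s_i\in\gamma$, a countable monotone chain inside $\gamma$ with $\s_i$ as its top element (using that $\gamma$ is a basis of a uniformity, so each entourage has arbitrarily fine refinements within $\gamma$), building the corresponding left $S$-nonexpansive $d_F$ for each such chain, and checking that the resulting collection both generates a zero-dimensional uniformity and is cofinal with $\U$. The verification of left $S$-nonexpansiveness of $d_F$ from the congruence property of the $\s_n$ is the only genuinely new computation and is short; everything else is bookkeeping with the already-established Theorem \ref{t:conditSEM} and Lemma \ref{l:ultra}.
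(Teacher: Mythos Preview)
Your plan for (1) $\Leftrightarrow$ (2) and (2) $\Rightarrow$ (3) matches the paper's proof almost verbatim: the opposite-monoid translation plus Theorem \ref{t:conditSEM}(1)$\Leftrightarrow$(6), and then the level sets $\{(x,y):\rho_i(x,y)<r\}$ as left congruences.

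The difference is in (3) $\Rightarrow$ (2). You identify an ``obstacle'' in producing left $S$-nonexpansive ultra-pseudometrics that recover the whole uniformity, and you propose to resolve it by running the Weil-type construction of Lemma \ref{l:ultra}(1) along countable chains in $\gamma$ and verifying nonexpansiveness through the chain argument. This works, but it is far more than what is needed. The paper simply observes that each equivalence relation $\s_i\in\gamma$ already \emph{is} the entourage of a two-valued ultra-pseudometric: set $\rho_i(x,y)=0$ if $(x,y)\in\s_i$ and $\rho_i(x,y)=1$ otherwise. Because $\s_i$ is a left congruence, $(x,y)\in\s_i$ implies $(sx,sy)\in\s_i$, so $\rho_i(sx,sy)\le\rho_i(x,y)$ trivially. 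Since the $\s_i$ form a basis of $\U$, the family $\{\rho_i\}$ generates $\U$ directly, with no chains, no infima over paths, and no cofinality bookkeeping. Your route would buy genuine metrics when $\gamma$ is countable (relevant later for Fact \ref{f:BodSchn}), but for the bare equivalence (3) $\Rightarrow$ (2) the one-line argument suffices.
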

\begin{proof} 
	(1) $\Leftrightarrow$ (2) By the equivalence (1) $\Leftrightarrow$ (6) in the dual version of Theorem \ref{t:conditSEM}.  
	there exists a zero-dimensional topologically compatible uniformity $\U$ on $S$ which is generated by a family $\{\rho_i: i \in I\}$ of left $S$-nonexpansive ultra-pseudometrics on $S$.

	(2) $\Rightarrow$ (3)  
	$\s_{i r}:=\{(x,y): \rho_i(x,y)<r\}$ is an equivalence relation for every $i \in I$ and $r>0.$  
 Since $\rho_i$ is left nonexpansive, then $\s_{i r}$ is a left congruence.  
 
 \sk 
 (3) $\Rightarrow$ (2) For every $\s_i$ we have the associated ultra-pseudometric defined by $\rho_i(x,y)=1$ for every $\s_i$-equivalent elements $x, y$. 
 Then $\{\rho_i: i \in I\}$ is a compatible family of left 
 $S$-nonexpansive ultra-pseudometrics on $S$. 
\end{proof}

It is well known (Lemin \cite{Lem84}) that a metrizable topological group $G$ is NA if and only if  $G$ admits a left invariant ultra-metric.  

\begin{defin}
	Let us say that 
	a topological monoid $S$ is \textit{l-ultrametrizable}  if there exists a topologically compatible left $S$-nonexpansive  ultra-metric $d$ on $S$. Similarly can be defined \textit{r-ultrametrizable} monoid. 
\end{defin}

As in the proof of Lemma \ref{l:ultra}, one may show that $S$ is l-ultrametrizable if and only if there exists a \textit{countable} 
	compatible
	family $\gamma:=\{\s_n: n \in \N\}$ of left $S$-nonexpansive ultra-pseudometrics.  

In particular, by Proposition \ref{p:congr} (for countable $\g$)  and Lemma  \ref{l:ultra}, we obtain that every second countable l-NA monoid $S$ is l-ultrametrizable. 
Using also Theorem \ref{c:universal},   the following known result, covered by \cite[Theorem 2.1]{BodSchn},  is obtained.
\begin{f}  \label{f:BodSchn} 
	(M. Bodirsky and F.M. Schneider \cite{BodSchn}) 
	 Let $S$ be a second countable topological monoid. 
	The following conditions are equivalent:
	\begin{enumerate} 
			\item $S$ is l-ultrametrizable; 
			
		\item $S$ is embedded, as a topological monoid, into $\N^{\N}$;  
	
		\item There exists a zero-dimensional topologically compatible uniformity $\U$ on $S$ with a countable uniform basis  $\gamma:=\{\s_n: n \in \N\}$ which consist of equivalence relations $\s_n$, where each $\s_n$ is a left $S$-congruence with $|\s_n| \leq \aleph_0$.
	\end{enumerate}
\end{f}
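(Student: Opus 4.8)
The plan is to show the cycle $(1) \Rightarrow (3) \Rightarrow (2) \Rightarrow (1)$, using the machinery already developed in the excerpt — chiefly Proposition \ref{p:congr}, Lemma \ref{l:ultra}, and Theorem \ref{c:universal}. First I would establish $(1) \Rightarrow (3)$. Assume $S$ carries a topologically compatible left $S$-nonexpansive ultra-metric $d$. Setting $\s_n := \{(x,y) \in S \times S : d(x,y) < 2^{-n}\}$ gives a countable decreasing family of equivalence relations (since $d$ is an ultra-metric) forming a base of a zero-dimensional compatible uniformity; each $\s_n$ is a left congruence because $d$ is left $S$-nonexpansive. The only point needing a word is the cardinality bound $|\s_n| \le \aleph_0$: here I would invoke second countability of $S$, which forces $d$ to be a separable ultra-metric, hence the open $2^{-n}$-balls are at most countably many, so each equivalence $\s_n$ has at most $\aleph_0$ classes; I should be careful to state $|\s_n|$ as referring to the number of classes (the index set of the partition), matching the intended reading in the statement of (3).

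Next, $(3) \Rightarrow (2)$ is essentially Proposition \ref{p:congr} applied with a countable $\g$: from the base $\{\s_n : n \in \N\}$ of left congruences one reads off left $S$-nonexpansive ultra-pseudometrics $\rho_n$ (each $\rho_n$ assigning distance $1$ to non-$\s_n$-equivalent points), and since the family is countable, Lemma \ref{l:ultra}(2) collapses $\{\rho_n\}$ to a single compatible ultra-pseudometric $d := \sup_n \rho_n$; because each $\rho_n$ is left $S$-nonexpansive so is the supremum, and because the $\s_n$ separate points (the uniformity is Hausdorff, $S$ being Tychonoff), $d$ is in fact an ultra-metric, giving back (1) too. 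So really (1), (2), (3) are already interlocked by Proposition \ref{p:congr} and Lemma \ref{l:ultra}; the genuinely new content is the link to embeddability into $\N^{\N}$.

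For $(2) \Rightarrow (1)$: a topological monoid embedding $S \hookrightarrow \N^{\N}$ lets us pull back the standard compatible ultra-metric on $\N^{\N}$ (say $\rho(f,g) = 1/\min\{k : f(k) \ne g(k)\}$, set to $0$ if $f=g$), which is left $\N^{\N}$-nonexpansive since precomposition on the left — wait, one must check the correct side: in $\N^{\N}$ with composition $\circ$, the translation $h \mapsto u \circ h$ on the right argument is the relevant one for $S$ acting on itself by left multiplication, and $\rho(u\circ h_1, u\circ h_2) \le \rho(h_1,h_2)$ holds because if $h_1,h_2$ agree on $\{0,\dots,k\}$ then so do $u\circ h_1, u\circ h_2$. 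Restricting to $S$ gives a compatible left $S$-nonexpansive ultra-metric, which is (1). Finally, to see that such $S$ is genuinely l-NA (and to connect with Theorem \ref{c:universal}), note $\N^{\N} = \kappa^{\kappa}$ for $\kappa = \aleph_0$ is r-NA, hence — passing to the opposite monoid or directly — an l-ultrametrizable second countable monoid is l-NA by the equivalence $(1) \Leftrightarrow (6)$ of Theorem \ref{t:conditSEM}.

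The main obstacle I anticipate is purely bookkeeping: keeping the ``left versus right'' conventions straight throughout. Proposition \ref{p:congr} and Theorem \ref{c:universal} are stated for r-NA monoids and left congruences, Theorem \ref{t:conditSEM} for l-NA monoids and right-nonexpansive pseudometrics, and $\N^{\N}$ carries a naturally one-sided nonexpansive ultra-metric; reconciling these by systematic use of $S \mapsto S^{op}$ (so that l-ultrametrizable $S$ has r-ultrametrizable $S^{op}$, which embeds in $\N^{\N}$ via the Cayley map by left translations) is where a careless argument would go wrong. The substantive topological input — that a second countable zero-dimensional uniformity has a countable base of finite-index, or at least $\aleph_0$-index, equivalence relations — is already packaged in Lemma \ref{l:0} and Lemma \ref{l:ultra}, so no new hard analysis is required.
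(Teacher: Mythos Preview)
Your approach is the same as the paper's --- everything hinges on Proposition~\ref{p:congr}, Lemma~\ref{l:ultra}, and Theorem~\ref{c:universal} --- but the left/right bookkeeping error you anticipated actually occurs, and it breaks the cycle.

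The paragraph you label ``$(3)\Rightarrow(2)$'' in fact proves $(3)\Rightarrow(1)$: you build a single left-nonexpansive ultra-metric from the $\s_n$'s and stop. No embedding into $\N^{\N}$ is produced there. Then in the final paragraph, where you try to close the loop via Theorem~\ref{c:universal}, you assert that an l-ultrametrizable monoid is \emph{l-NA} by Theorem~\ref{t:conditSEM}(1)$\Leftrightarrow$(6). That is wrong: condition~(6) of Theorem~\ref{t:conditSEM} speaks of \emph{right} $S$-nonexpansive ultra-pseudometrics, whereas l-ultrametrizable means you have a \emph{left} $S$-nonexpansive ultra-metric. The correct implication is l-ultrametrizable $\Rightarrow$ \emph{r}-NA, via Proposition~\ref{p:congr}(1)$\Leftrightarrow$(2), and it is precisely this r-NA conclusion that Theorem~\ref{c:universal} needs in order to embed $S$ into $\N^{\N}$. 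Once you fix this --- $(1)\Rightarrow$ r-NA (Proposition~\ref{p:congr}) $\Rightarrow S\hookrightarrow\N^{\N}$ (Theorem~\ref{c:universal}) --- the cycle closes exactly as the paper indicates. Your arguments for $(1)\Rightarrow(3)$ and $(2)\Rightarrow(1)$ are fine.
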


%
%

	Let $\F$ be an NA valued field and $(V,||\cdot||)$ be an ultra-normed space over $\F$. Then the topological monoid $\Theta_{lin}(V)$ (being a submonoid of $\Theta(V,d_{||\cdot||})$) is r-NA by Theorem \ref{t:conditSEM}.   In fact, every r-NA monoid is a topological submonoid of  $\Theta_{lin}(V)$ for such $V$, as the next proposition shows. 
	For the definition and examples of NA valued fields see, for example,  \cite{MeShNAtransp}

\begin{prop} \label{p:NAvalued} 
Let $S$ be an r-NA monoid. Then for every  NA valued field $(\F,|\cdot|)$ there exists an ultra-normed $\F$-vector space $(V,||\cdot||)$ such that
$S$ is a topological submonoid of $\Theta_{lin}(V).$
\end{prop}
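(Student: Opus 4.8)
The plan is to linearise the \emph{left} translation action of $S$ on itself; the key structural point is a matching of sides. To land in a monoid \emph{homomorphism} (not an anti-homomorphism) into $\Theta_{lin}(V)$ we must use the left translations $\lambda_s\colon x\mapsto sx$, which satisfy $\lambda_s\circ\lambda_t=\lambda_{st}$, and to make these descend to self-maps of finite-type quotients we need a neighbourhood base at $e_S$ consisting of \emph{left congruences}. This is precisely what the hypothesis gives: since $S$ is r-NA, Proposition \ref{p:congr} yields a zero-dimensional topologically compatible uniformity $\U$ on $S$ possessing a base $\gamma=\{\sigma_i : i\in I\}$ of open equivalence relations, each $\sigma_i$ being a left congruence. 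As $S$ is Tychonoff, $\U$ is Hausdorff, so $\bigcap_i\sigma_i=\Delta_S$ and the sets $\sigma_i(s)$ form a neighbourhood base at each $s\in S$.

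For each $i\in I$ put $D_i:=S/\sigma_i$ and let $W_i$ be the $\F$-vector space with basis $\{e_d : d\in D_i\}$, equipped with the supremum ultra-norm $\|\sum_d v_d e_d\|:=\max_d|v_d|$ (a finite max, since only finitely many $v_d$ are nonzero). This assignment is functorial: any map $f\colon D\to D'$ of sets induces a linear operator $e_d\mapsto e_{f(d)}$ of norm $\le 1$, and composition is respected. Since $\sigma_i$ is a left congruence, $\lambda_s$ descends to $\bar\lambda_s^i\colon D_i\to D_i$, $[x]\mapsto[sx]$, with $\bar\lambda_s^i\circ\bar\lambda_t^i=\bar\lambda_{st}^i$. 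Now let $V:=\bigoplus_{i\in I}W_i$ be the algebraic direct sum with the ultra-norm $\|(v_i)_i\|:=\max_i\|v_i\|$; this is an ultra-normed $\F$-vector space (one may pass to its completion, an ultra-Banach space, if $\F$ is complete). Define $T\colon S\to\Theta_{lin}(V)$ by letting $T_s$ act coordinatewise as the operator induced by $\bar\lambda_s^i$ on $W_i$ (that is, $e_{[x]}\mapsto e_{[sx]}$).

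The remaining verifications are routine. Functoriality of the construction together with $\bar\lambda_s^i\circ\bar\lambda_t^i=\bar\lambda_{st}^i$ shows that each $T_s$ is linear of norm $\le 1$, hence $T_s\in\Theta_{lin}(V)$, and that $T$ is a monoid homomorphism with $T_{e_S}=\mathrm{id}_V$. It is injective: evaluating $T_s$ and $T_t$ on the element whose $i$-th coordinate is $e_{[e_S]}$ and whose other coordinates vanish shows that $T_s=T_t$ forces $(s,t)\in\sigma_i$ for all $i$, hence $s=t$. Finally $T$ is a topological embedding for the strong operator topology $\tau_{sop}$: on the same single-coordinate vectors, $\|T_{s_\alpha}v-T_sv\|$ equals $1$ when $(s_\alpha,s)\notin\sigma_i$ and $0$ otherwise, so $T_{s_\alpha}\to T_s$ in $\tau_{sop}$ implies $s_\alpha\to s$; conversely, if $s_\alpha\to s$ in $S$, then continuity of the monoid multiplication gives $s_\alpha x\to sx$ for each $x\in S$, hence $(s_\alpha x,sx)\in\sigma_i$ eventually for each $i$, and since every $v\in V$ has finitely many nonzero coordinates, each a finite $\F$-combination of basis vectors, one obtains $T_{s_\alpha}v=T_sv$ eventually, i.e.\ $T_{s_\alpha}\to T_s$ in $\tau_{sop}$. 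Thus $T$ embeds $S$ as a topological submonoid of $\Theta_{lin}(V)$.

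The one place where care is needed is exactly this matching of topologies, and above all the choice of $V$: the naive attempt to take for $V$ a non-archimedean analogue of the Lipschitz-free (Arens--Eells) space over the ultra-pseudometric space $(S,\rho_i)$ runs into well-known trouble over a general NA valued field, since real distances are not scalars of $\F$ and $\F$-valued Lipschitz functions cannot be freely truncated. Replacing the metric description of r-NA monoids by the congruence description of Proposition \ref{p:congr} and invoking only the trivial free-$\F$-module functor on \emph{sets} avoids this entirely; continuity of the monoid multiplication then supplies the rest.
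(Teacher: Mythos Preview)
Your proof is correct and takes a genuinely different route from the paper's. The paper first reduces, via Theorem~\ref{t:conditSEM}, to the universal case $S=\Theta(M,d)$ for an ultra-metric space $(M,d)$ with $d\le 1$, then takes $V=L_{\F}(M)$ equipped with the \emph{Kantorovich ultra-norm} (the maximal ultra-norm extending $d$ on $\overline M=M\cup\{\mathbf 0\}$) developed in \cite{MeShNAtransp}, and checks that the linear extension $f\mapsto\bar f$ lands in $\Theta_{lin}(V)$ and is a topological embedding. So precisely the construction you warn against in your final paragraph \emph{is} the paper's method, made to work by invoking the Kantorovich machinery of \cite{MeShNAtransp} rather than a naive Arens--Eells transplant. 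Your argument instead stays with $S$ itself, passes through Proposition~\ref{p:congr} to replace metrics by left congruences $\sigma_i$, and uses only the free-module functor on the discrete quotients $S/\sigma_i$ with the sup norm; the non-archimedean inequality $|n|\le 1$ for integers $n$ in $\F$ is what makes the induced operators contractive even when $\bar\lambda_s^i$ is not injective. Your approach is more elementary and entirely self-contained within this paper, while the paper's approach exhibits $V$ as a single free object with intrinsic geometric meaning (the transportation norm) at the cost of importing external results.
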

\begin{proof}
In view of Theorem \ref{t:conditSEM},  it suffices to prove the assertion for $S=\Theta(M,d),$ where $(M,d)$ is an ultra-metric space. Moreover, we can assume that 
 $d\leq 1,$ as $\Theta(M,d)$ is a topological submonoid of $\Theta(M,\rho),$ where $\rho=\min\{d,1\}.$ Let 		$V:=L_\F(M)$ be the  free $\F$-vector space on the set $M$ and $\overline{M}:= M\cup \{\mathbf{0}\},$ where $\mathbf0\notin M$ is the zero element of  $V.$ By \cite[Lemma 4.1]{MeShNAtransp}, one can extend $d$ to an ultra-metric
 on $\overline{M}$ by letting $d(x,\mathbf0)=1$ for every $x\in M.$ Now, let $||\cdot||$ be the  maximal ultra-norm on $V$ extending $d.$   Recall that this is just the Kantorovich ultra-norm  associated with $d$ (see \cite[Definition 4.2]{MeShNAtransp}). We will show that for every $f\in \Theta(M,d)$ it holds that $\overline{f}\in\Theta_{lin}(V)$, where $\bar f\colon V\to V$ denotes  the  linear extension of $f$.  Since $f\in \Theta(M,d)$ and $d(y,\mathbf0)=1$ for every $y\in M$ it holds that $\bar{f}\in \Theta(\overline{M},d).$  Let $v\in V$ and let us show that $||\bar{f}(v)||\leq ||v||.$ 
 By Theorem 4.2 
 (NA Arens-Eells embedding) and Theorem 5.2 
 (Min-attaining Theorem) of \cite{MeShNAtransp}, 
 $$
 ||v||=\max_{1\leq i\leq n}|\lambda_i|d(x_i,y_i),
 $$ 
 for some representation $$v=\sum_{i=1}^n\lambda_i(x_i-y_i), \ x_i,y_i\in \overline{M}, \ \lambda_i\in \F.$$
 Using the linearity of $\bar{f}$ we have $$\bar{f}(v)=\sum_{i=1}^n\lambda_i(\bar{f}(x_i)-\bar{f}(y_i)).$$
At this point, recall that ${f}\in \Theta(M,d)$ and $d(f(x),f(\mathbf 0))=d(x,\mathbf 0)=1$ for every $x\in M.$ So,  using \cite[Theorem 4.3]{MeShNAtransp} again we deduce that $$||\bar{f}(v)||\leq \max_{1\leq i\leq n}|\lambda_i|d(\bar{f}(x_i),\bar{f}(y_i))\leq \max_{1\leq i\leq n}|\lambda_i|d(x_i,y_i)=||v||.$$ 
 
As the monoids  $\Theta(M,d)$ and $\Theta_{lin}(V)$ are equipped with the pointwise topology and by the interrelations between the ultra-metric $d$ and the ultra-norm $||\cdot||,$ we conclude that the assignment $f\mapsto \bar{f}$ is a topological  embedding of  $\Theta(M,d)$ into $\Theta_{lin}(V).$ 
\end{proof}

\sk 
\section{Appendix: a factorization theorem for monoid actions}

There are several useful factorization and approximation theorems for topological group actions. See \cite{Me-sing89}, \cite{Me-diss85} and \cite{Me-b}. Some of them 
can easily be adopted (sometimes under more restrictive assumptions) for topological monoid actions. 
Theorem \ref{t:G-SklyarenkoSEMmetr} below is one of such results. For the sake of completeness we include here its proof. The proof uses the definition of uniform spaces 
in terms of uniform coverings (see, for example, \cite{Isb} and Definition \ref{d:cov-unif} below). 
We first recall some related definitions. 

\sk 
Let $A$ be a subset of $X$ and $P$ 
be a family of subsets 
of $X$. We write $A \succ P$ if $A$ is a subset of at least one $B \in P$. 
Let $P$ and $Q$ be two coverings of a set $X$. We say that $Q$ is a \textit{refinement} of $P$ and write 
$Q \succ P$ if $A \succ P$ for every $A \in Q$. 
Define also 
$$
P \wedge Q:=\{A \cap B: A \in P, B \in Q\}. 
$$

Let $P$ be a covering of a set $X$ and let $A \subset X$. The \textit{star of $A$ with respect to $P$} is the set  
$$
st(A,P)=\cup\{U \in P: U \cap A \neq \emptyset\}. 
$$
For the singleton $A:=\{a\}$ we simply write $st(a,P)$. 
So, $st(a,P)=\cup\{U\in P: a\in U\}$. 
The collection $$P^*:=\{st(A,P): A \in P\}$$ is a covering and is called the \textit{star of $P$}. Always, $P \succ P^*$. 
If $P^* \succ Q,$ then we say that $P$ is a star-refinement of $Q$. Sometimes we write $P \succ_* Q$ instead of $P^* \succ Q$.  

     For a covering $P$ of $X$ we define the order $ord_x(P)$ and $ord(P)$  by 
$$ord_x(P):=|\{A \in P: x \in A\}| \ \text{and} \  
ord(P):=\sup \{ord_x(P): x \in X\}.$$
If $\cU$ contains a base consisting of covers $P$ with $ord(P) \leq n+1$, where $n$ is a given nonnegative integer, then we say that the (uniform) dimension $\dim(\cU) \leq n$.  
We write $\dim(\cU) = \infty$ if $\dim (\cU) \geq n$ for every $n \in \N$. 
For compact spaces this gives just the usual topological covering dimension $\dim$. 
Note that the notation of the uniform dimension in \cite[Ch. V]{Isb} is $\Delta d$ and if it is finite then $\dim(\cU) \leq n < \infty$ if a and only if every finite uniform covering has a finite uniform refinement with order $\leq n+1$. The completion and the Samuel compactification both preserve the dimension. 

\sk 
If $S \times X \to X$ is an action, then for every subset $A \subset S$ and a family $P$ of subsets in $X$ define 
$
AP:=\{AU: U \in P\}. 
$

\begin{defin} \label{d:cov-unif}[coverings approach]  \cite{Isb} 
	Let $\cU$ be a family of coverings  on a set $X$. 
	Then $\cU$ is said to be a \textit{(covering) pre-uniformity} on $X$ if: 
	\begin{itemize}
		\item [(C1)] $P,Q \in \cU$ implies that $P \wedge Q \in \cU$; 
		\item [(C2)] $P \in \cU$ and $P \succ Q$ imply that $Q \in \cU$; 
		\item [(C3)] 
		for every  $Q \in \cU$ there exists $P \in \cU$ such that $P^* \succ Q$.   
	\end{itemize} 
	$\cU$ is a \textit{uniformity} (Hausdorff pre-uniformity) if for every distinct points $x,y \in X$ there exists $P \in \cU$ such that $st(x,P)$ and $st(y,P)$ are disjoint.  
\end{defin}
%
%
%

\begin{remark} \label{r:BoundCover}  \ 
	\begin{enumerate}
		\item As to the link between 
		theese two approaches, note that 
		every uniform covering $P \in \cU$ induces the corresponding entourage 
		$\tilde{P}:=\cup \{A \times A: A \in P\}.$ 
		Every entourage $\eps \in \U$ induces the corresponding \textit{$\eps$-uniform cover} 
		$\{\eps(x): x \in X\},$ 
		where 
		$\eps(x):=\{y \in X: (x,y) \in \eps\}.$ 
		
		\item  	
		In terms of covering uniformity $\cU$ we have the following condition for $\cU$-equiuniform action (compare Definition \ref{d:equiun}.3): 
		
		\nt	for every covering $P \in \cU$
		there exist a neighborhood $V \in N_{s_0}$ and a covering $Q \in \cU$ such that $V Q \succ P$ for each $x\in X$ and $s_1, s_2 \in V$.   
	\end{enumerate} 
\end{remark}

\begin{thm} \label{t:G-SklyarenkoSEMmetr} Let $\nu \colon X \hookrightarrow Y$  be a proper $S$-compactification of $X$. Then there exists a proper $S$-compactification $\sigma \colon X \hookrightarrow K$ which is majored by $\nu$
		\begin{equation*}
		\xymatrix { X \ar[dr]_{\s} \ar[r]^{\nu} & Y  \ar[d]^{q} \\
			& K }
	\end{equation*} 
	 such that $w(X) \leq w(K) \leq w(X) \cdot w(S)$ and $\dim K \leq \dim Y$. In particular, if $\dim Y=0$ then also $\dim K=0$. 
	 If $w(S) \leq w(X)$ (e.g., if $S$ is second countable), then $w(X)=w(K)$. 
\end{thm}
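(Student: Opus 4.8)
The plan is to mimic the classical Sklyarenko-type factorization argument for $G$-compactifications, adapted to monoid actions, working throughout with the uniform-covering description of uniformities. First I would equip $Y$ with its unique compatible uniformity $\mathcal U_Y$; since $Y$ is compact, the action $S\times Y\to Y$ is equiuniform by Lemma \ref{e:easy}.2, so in the covering language (Remark \ref{r:BoundCover}.2) for every finite uniform cover $P$ of $Y$ and every $s_0\in S$ there are $V\in N_{s_0}$ and a finite uniform cover $Q$ with $VQ\succ P$. Identifying $X$ with $\nu(X)\subset Y$, I want to build a sub-uniformity $\mathcal V$ of $\mathcal U_X=\mathcal U_Y|_X$ that is coarse enough to have weight $\le w(X)\cdot w(S)$, is still equiuniform for the restricted action, and whose dimension does not exceed $\dim Y$; then $K:=u(X,\mathcal V)$ together with the induced action will be the desired factor, with the map $q\colon Y\to K$ coming from the inclusion $\mathcal V\subseteq\mathcal U_X$ and functoriality of the Samuel compactification (Fact \ref{t:samuel}).

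The core construction is a standard closing-off / transfinite bookkeeping argument. Start from a base $\mathcal P_0$ of $\mathcal U_X$ of finite uniform covers of size $\le w(X)$. Then repeatedly enlarge: given a family $\mathcal P_\alpha$ of finite uniform covers of $X$, for each $P\in\mathcal P_\alpha$ and each point $s_0$ in a fixed dense subset $D\subseteq S$ of size $\le w(S)$ (density suffices by continuity of the action and a $3$-epsilon argument), adjoin a finite uniform cover $Q=Q(P,s_0)$ witnessing equiuniformity, together with a star-refinement of each cover already present (condition (C3)) and all finite meets (C1); also adjoin, for each $P$, a finite uniform \emph{refinement} of $P$ with order $\le\dim Y+1$, which exists because $\dim\mathcal U_Y\le\dim Y$ and this dimension is inherited by finite uniform covers of $Y$, hence by their restrictions to $X$. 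Taking the union over $\omega$ many stages and closing under (C2) yields a covering pre-uniformity $\mathcal V$; it is Hausdorff because it refines enough of $\mathcal U_X$ to separate points (one can throw in, for each pair of distinct points, a separating cover from $\mathcal U_X$ of which we keep a countable refining chain). By construction $|\mathcal V|\le w(X)\cdot w(S)\cdot\aleph_0=w(X)\cdot w(S)$, so $w(K)=w(u(X,\mathcal V))\le w(X)\cdot w(S)$; the dimension clause holds since $\mathcal V$ has a base of covers of order $\le\dim Y+1$, and the Samuel compactification preserves uniform dimension, giving $\dim K\le\dim Y$, and in particular $\dim K=0$ when $\dim Y=0$. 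The lower bound $w(X)\le w(K)$ is automatic since $\sigma$ is an embedding, and when $w(S)\le w(X)$ the product $w(X)\cdot w(S)$ collapses to $w(X)$.

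It remains to check that $\mathcal V$ is equiuniform for the $S$-action on $X$, so that by Fact \ref{t:samuel} the induced action on $K=uX$ is a proper $S$-compactification and $q\colon Y\to K$ is an $S$-map — the latter because both are quotients of the same $S$-space via the inclusions of uniformities and the evaluation/orbit maps intertwine. Saturation: for $s_0\in S$ and $P\in\mathcal V$ I need $s_0^{-1}P$ (in covering terms, a cover refining the preimage) to lie in $\mathcal V$; this is exactly what the adjoined witnesses $Q(P,s_0)$ provide at the stage where $P$ first appeared, after passing to a dense $s_0$ and using a star-refinement to absorb the approximation error. Boundedness is built in the same way. I expect the main obstacle to be precisely this verification that a \emph{countably iterated} closing-off is enough — one must ensure that the witness $Q(P,s_0)$ added for a cover $P$ at stage $\alpha$ is itself a member of $\mathcal V$ (it is, since it is adjoined) and that the star-refinements needed to run the $3$-epsilon argument were also added, i.e.\ that the bookkeeping is organized so every requirement (C1)–(C3), Hausdorffness, equiuniformity at each point of $D$, and order $\le\dim Y+1$) is met cofinally; handling the density reduction $S\rightsquigarrow D$ carefully (so that an $N_{s_0}$-neighborhood for a dense $s_0$ upgrades to one for all $s_0$) is the delicate point, and is where continuity of $S\times X\to X$ and the monoid multiplication, rather than mere continuity of translations, is used.
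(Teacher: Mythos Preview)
Your overall strategy—the countable closing-off to produce a small equiuniform sub-pre-uniformity, followed by passage to a compact quotient—matches the paper's. Two points deserve comment.

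First, a slip that matters: you start from ``a base $\mathcal P_0$ of $\mathcal U_X$ of size $\le w(X)$''. Such a base need not exist (take $X=\N\subset\beta\N=Y$: then $w(X)=\aleph_0$ but the subspace uniformity $\mathcal U_X$ has no countable base), and if it did exist your $\mathcal V\supseteq\mathcal P_0$ would regenerate all of $\mathcal U_X$, yielding $K=u(X,\mathcal V)=Y$ with no weight reduction at all. What you actually need—and what the paper uses—is merely a subfamily $\gamma\subset\cU$ of size $w(X)$ that separates points and closed subsets of $\nu(X)$; this is enough to make $\sigma=q\circ\nu$ a topological embedding, and is \emph{not} a base for the uniformity.

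Second, the paper sidesteps the ``delicate point'' you flag. Rather than fixing a dense $D\subseteq S$ in advance and later upgrading to all of $S$ via a $3$-epsilon/star-refinement argument, the paper proceeds cover-by-cover: for each pair $P_\alpha,P_\beta$ already in hand it first chooses, for \emph{every} $s\in S$, an open $V^s_{\alpha\beta}\ni s$ and a witness cover $P^s_{\alpha\beta}$ with $V^s_{\alpha\beta}P^s_{\alpha\beta}\succ_* P_\alpha\wedge P_\beta$ and $ord(P^s_{\alpha\beta})\le\dim\cU+1$, and only then extracts $S_1\subseteq S$ with $|S_1|\le w(S)$ such that $\{V^s_{\alpha\beta}:s\in S_1\}$ still covers $S$. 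Because these neighborhoods are open and cover $S$, every $t\in S$ lies in some $V^s_{\alpha\beta}$, which is then automatically a neighborhood of $t$; hence equiuniformity at every $t\in S$ is obtained directly from the witnesses already adjoined, with no approximation step needed. (The paper also carries out the construction on $Y$ rather than on $X$ and realizes $K$ as the Hausdorff quotient of $(Y,\widetilde{\cU})$, which makes $q\colon Y\to K$ immediate; your Samuel-compactification variant on $X$ is equally valid once the first point above is corrected.)
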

\begin{proof} 
	Denote by $\cU$ the 
	unique compatible 
	covering uniformity
	  of the compact space $Y$. 
	 There exists 
	a subfamily $\g \subset \cU$ such that $\g$ separates points and closed subsets of $\nu(X)$ (which is homeomorphic to $X$) and $|\g|=w(X)$.  
	
	We claim that there  exists a (not necessarily, Hausdorff) coarser pre-uniformity $\widetilde{\cU}$ on the set $Y$ such that the following four conditions are satisfied:
	\begin{itemize}
		
		\item [(a)] $\widetilde{\cU}$ is bounded and saturated (Definition \ref{d:equiun}.4) with respect to the given action; 
		\item [(b)] $\g \subset \widetilde{\cU} \subset \cU$; 
		\item [(c)] $\dim (\widetilde{\cU}) \leq \dim(\cU)$;  
		\item [(d)] $w(\widetilde{\cU}) \leq |\g| \cdot w(S)$. 
	\end{itemize} 
	
	\sk 
	Since $Y$ is compact and the action of $S$ on $Y$ is continuous, $\cU$ is equiuniform in the sense of Definition \ref{d:equiun}. 
	Therefore, 
	by Remark \ref{r:BoundCover}, 
	 for every pair $P_{\a},P_{\beta} \in \g$ and every $s \in S$ we can choose a covering 
	$P^{s}_{\a \beta} \in \cU$ and a neighborhood $V^s_{\a \beta} \in N_{s}$ of $s$ in $S$ such that: 
	
		
		(1)  $V^s_{\a \beta}P^{s}_{\a \beta} \succ_* P_{\a} \wedge P_{\beta}$. 
		
		
		Moreover, we can assume, in addition, that 
		
		(2) $ord(P^s_{\a \beta}) \leq \dim(\cU) +1$.    
\sk 
	
	Clearly, $S=\cup\{V^s_{\a \beta}: \ s \in S\}$. 
	One may choose a subset $S_1 \subseteq S$ such that $|S_1| \leq w(S)$ and $S=\cup \{V^s_{\a \beta}: \ s \in S_1\}$.  
	Consider 
	$$\mathfrak{B}_1=\{p^s_{\a \beta}: \ s \in S_1\}.$$ 
	Then  the following two additional conditions hold. 
	
	(3) $\mathfrak{B}_1 \subset \cU$ and $\mathfrak{B}_1 \leq |\g|  \cdot w(S)$;  
	
	(4) for every pair $P_{\a},P_{\beta} \in \g$ and for every $s \in S$ there exist $O_{\a \beta} \in N_s$ and $P^s_{\a \beta} \in \mathfrak{B}_1$  
	such that 
	$O_{\a \beta} P^s_{\a \beta} \succ_* P_{\a} \wedge P_{\beta}$. 
	
	\sk 
	We continue by induction. Let us assume that the families $\mathfrak{B}_1, \mathfrak{B}_2, \cdots, \mathfrak{B}_n$ are  already defined. Applying the similar procedure to the family $\cup_{i=1}^n \mathfrak{B}_i$ we get $\mathfrak{B}_{n+1}$.  	
	Note that, in particular, we have 
	
	(5) for every pair $P_{\a},P_{\beta} \in \mathfrak{B}_n$ and for every $s \in S$ there exist $O_{\a \beta} \in N_s$ and $P^s_{\a \beta} \in \mathfrak{B}_{n+1}$  
	such that 
$O_{\a \beta} P^s_{\a \beta} \succ_* P_{\a} \wedge P_{\beta}$. 
	
	\sk 
	The resulting family $\mathfrak{B}=\cup_{i \in \N} \mathfrak{B}_n$ is a base for the following pre-uniformity 
$$\widetilde{\cU}:=\{P : \exists Q \in \mathfrak{B} \ \ \ Q \succ P\}$$ 
	on $Y$. This is the desired  pre-uniformity. Indeed, 
	from the construction 
	it is immediate to see that $\g \subset \widetilde{\cU} \subset \cU$. This proves (b). The conditions (c) and (d)  	
	%
	%
	are also clear.  
	For (a) use condition (5). 
	
	\sk 
	Now, let $K:=(Y^*,\widetilde{\cU}^*)$ be the\textit{ associated (quotient) Hausdorff uniform space} 
	(see Kulpa \cite{Kulpa73} or \cite{Me-sing89, Me-b}) of the pre-uniform space $(Y,\widetilde{\cU})$ and $q \colon  Y \to K$ is the canonical onto map.  
	Then 
	$\widetilde{\cU}^*$ is bounded and saturated, too.  
	Hence, the action $S \times K \to K$ is continuous (Lemma \ref{e:easy}.1). Also, $\dim K=\dim (\widetilde{\cU}) \leq  \dim(\cU)=\dim Y$ and $w(K) \leq w(X) \cdot w(S)$.  
	
	Then 
	$\sigma=q \circ \nu \colon X \to K$ 
	defines the desired proper $S$-compactification of $X$.  
\end{proof}



\bibliographystyle{amsplain}

\end{document}